\newtheorem{thm}{Theorem}
\newtheorem{prop}[thm]{Proposition}
\newtheorem{lem}[thm]{Lemma}
\newtheorem{defi}[thm]{Definition}
\newtheorem{rem}[thm]{Remark}
\def\A{\mathcal A}
\def\I{\mathcal I}
\def\M{\mathcal M}
\def\R{\mathbb R}
\def\T{\mathcal T}
\def\e{\varepsilon}
\newcommand{\pt}{\partial}
\newcommand{\abs}[1]{\ensuremath{\left|#1\right|}}
\newcommand{\card}[1]{\left\lvert#1\right\rvert}
\newcommand{\norm}[1]{\ensuremath{\left\|#1\right\|}}
\newcommand{\doublewidetilde}[1]{{%
  \mathpalette\double@widetilde{#1}%
}}
\newcommand{\double@widetilde}[2]{%
  \sbox\z@{$\m@th#1\widetilde{#2}$}%
  \ht\z@=.9\ht\z@
  \widetilde{\box\z@}%
}
\title[Three scale unfolding homogenization method applied to cardiac bidomain model]{Three scale unfolding homogenization method applied to cardiac bidomain model}
\subjclass{65N55
\and 35A01 
\and 35B27 
\and 35K57
\and 65M.
}
 \keywords{Bidomain model, homogenization theory, periodic unfolding method, convergence, double-periodic media, reaction-diffusion system}
\author{Fakhrielddine Bader$^*$ }
\address[Fakhrielddine Bader]{Mathematics Laboratory, Doctoral school of Sciences and Technologies, Lebanese University, Hadat Beirut, Lebanon \&  Laboratoire de Mathématiques Jean Leray, École Centrale de Nantes, Nantes, France}
\email{fakhrielddine.bader@ec-nantes.fr}
\author{Mostafa Bendahmane}
\address[Mostafa Bendahmane]{Institut de Mathématiques de Bordeaux and INRIA-Carmen Bordeaux Sud-Ouest, Université de Bordeaux, 33076 Bordeaux Cedex, France}
\email{mostafa.bendahmane@u-bordeaux.fr}
\author{Mazen Saad}
\address[Mazen Saad]{Laboratoire de Mathématiques Jean Leray, UMR 6629 CNRS, École Centrale de Nantes,  1 rue de Noé, 44321 Nantes, France}
\email{mazen.saad@ec-nantes.fr}
\author{Raafat Talhouk}
\address[Raafat Talhouk]{Mathematics Laboratory, Doctoral school of Sciences and Technologies, Lebanese University, Hadat Beirut, Lebanon}
\email{rtalhouk@ul.edu.lb}
\thanks{$^*$ Corresponding author: fakhri.bader.fb@gmail.com}
\begin{document}
\maketitle



\begin{abstract}
In this paper, we are dealing with a rigorous homogenization result at two different levels for the bidomain model of cardiac electro-physiology.
The first level associated with the mesoscopic structure such that the cardiac tissue consists of extracellular and intracellular domains separated by an interface (the sarcolemma). The second  one related to the microscopic structure in such a way that the intracellular medium can only be viewed as a periodical layout of unit cells (mitochondria). At the interface between intra- and extracellular media, the fluxes are given by nonlinear functions of ionic and applied currents. A rigorous homogenization process based on unfolding operators is applied to derive the macroscopic (homogenized) model from our meso-microscopic bidomain model. We apply a three-scale unfolding method in the intracellular problem to obtain its homogenized equation at two levels. The first level upscaling of the intracellular structure yields the mesoscopic equation. The second step of the homogenization leads to obtain the intracellular homogenized equation. To prove the convergence of the nonlinear terms, especially those defined on the microscopic interface, we use the boundary unfolding method and a Kolmogorov-Riesz compactness's result. Next, we use the standard unfolding method to homogenize the extracellular problem. Finally, we obtain, at the limit, a reaction-diffusion system on a single domain (the superposition of the intracellular and extracellular media) which contains the homogenized equations depending on three scales. Such a model is widely used for describing the macroscopic behavior of the cardiac tissue, which is recognized to be an important messengers between the cytoplasm (intracellular) and the other extracellular inside the biological cells.
\end{abstract}
\tableofcontents
\section{Introduction}
The heart is a hollow muscle whose role is to pump blood to the body’s organ through blood vessels. It is located near the centre of the thoracic cavity between the right and left lungs. It is a muscular organ which can be viewed as two pumps that operate in series. The right atrium and ventricle, which pump blood from systemic circulation through the superior and inferior vena cava, to the lungs where it receives oxygen. While the left atrium and ventricle, which pumps blood from the pulmonary circulation through the aorta, to every other part of the body. These four cavities are surrounded by a cardiac tissue that is organized into muscle fibers (see Figure \ref{heart} and see \cite{katz10} for more information). Specifically, the cardiac muscle cells are connected together to form an electrical syncytium, with tight electrical and mechanical connections between adjacent cardiac muscle cells. These fibers form a network of cardiac muscle cells called "cardiomyocytes" connected end-to-end by the gap junctions. The mechanical contraction of the heart is caused by the electrical activation of these myocardial cells. For more details about the physiology of heart, we refer to \cite{berne09} and about the electrical activity of heart we refer to \cite{char}.

\begin{figure}[h!]
  \centering
  \includegraphics[width=7cm]{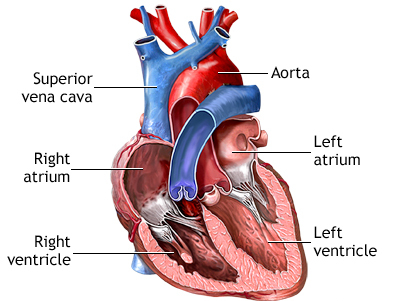}\quad \includegraphics[width=6cm]{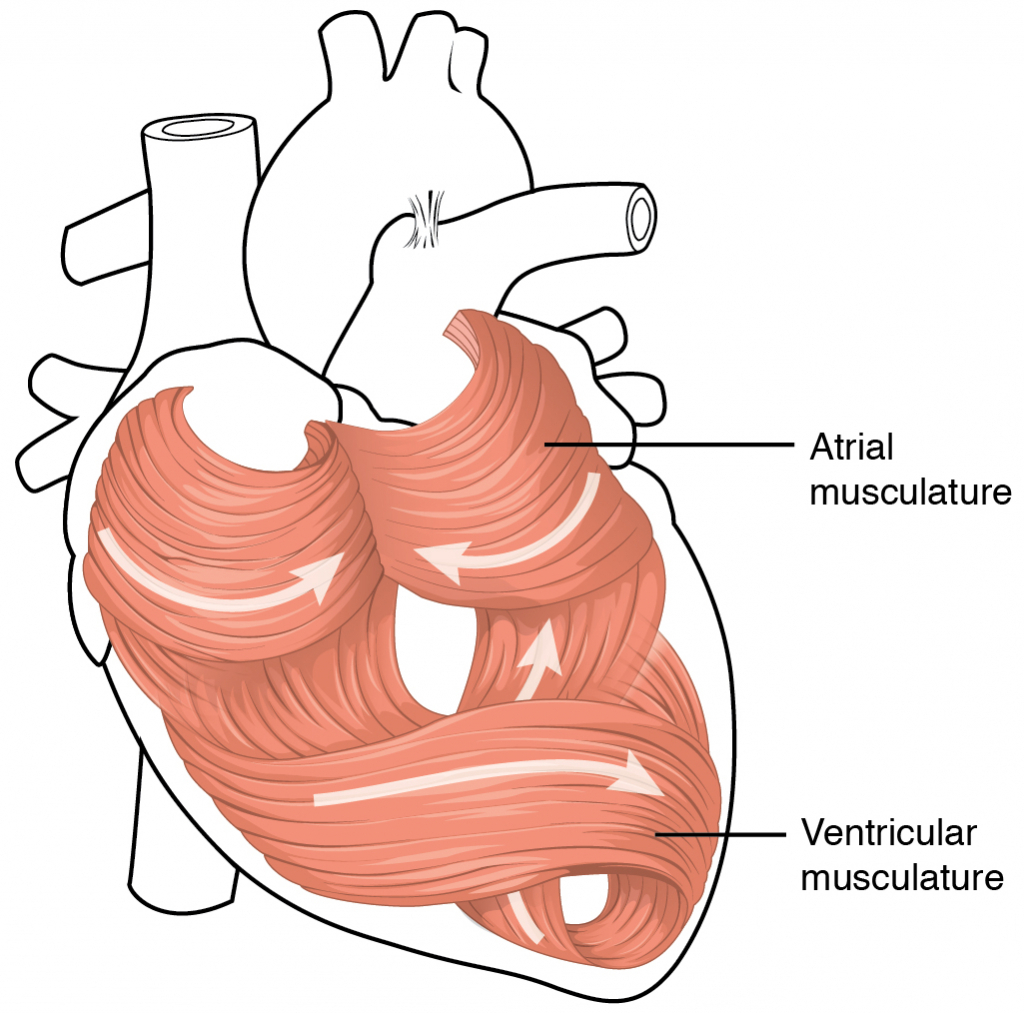}
  \caption{Composition of the heart}
  \url{http://pennstatehershey.adam.com/content.aspx?productid=114&pid=2&gid=19612, http://ressources.unisciel.fr/physiologie/co/4a_1.html }
  \label{heart}
 \end{figure}
 
  In this paper, our attention is initially directed at the organization of cardiac muscle cells within the heart. The structure of cardiac tissue studied in this paper is characterized at three different scales (see Figure \ref{cardio}). At mesoscopic scale, the cardiac tissue is divided into two media: one contains the contents of the cardiomyocytes, in particular the "cytoplasm" which is called the "intracellular" medium, and the other is called extracellular and consists of the fluid outside the cardiomyocytes cells. These two media are separated by a cellular membrane (the sarcolemma) allowing the penetration of proteins, some of which play a passive role and others play an active role powered by cellular metabolism. At microscopic scale, the cytoplasm comprises several organelles such as mitochondria. Mitochondria are often described as the "energy powerhouses" of cardiomyocytes and are surrounded by another membrane. In our study, we consider only that the intracellular medium can be viewed as a periodic structure composed of other connected cells. While at the macroscopic scale, this domain is well considered as a single domain (homogeneous).
 \begin{figure}[h!]
  \centering
  \includegraphics[width=13cm]{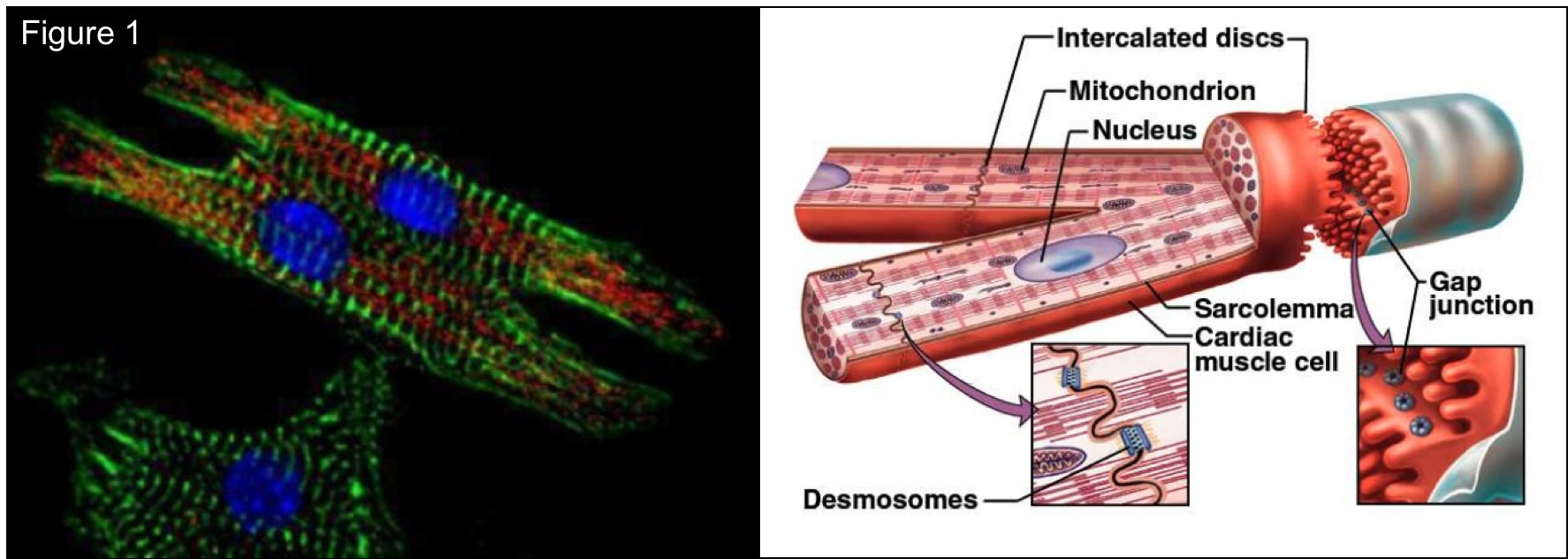}
  \caption{Representation of the cardiomyocyte structure}
  \url{http://www.cardio-research.com/cardiomyocytes}
  \label{cardio}
 \end{figure}
 It should be noted that there is a difference between the chemical composition of the cytoplasm and that of the extracellular medium. This difference plays a very important role in cardiac activity. In particular, the concentration of anions (negative ions) in cardiomyocytes is higher than in the external environment. This difference of concentrations creates a transmembrane potential, which is the difference in potential between these two media. The  model that describes the electrical activity of the heart, is called by "Bidomain model". The authors in \cite{bendunf19} established the well-posedness of this microscopic bidomain model under different conditions and proved the existence and uniqueness of their solutions (see also \cite{henri,marco,char09} for other approaches).\\ 
 We start from the microscopic bidomain model, resolving the geometry of the domain, which consists of two quasi-static approximation of elliptic equations, one for the electrical potential in the intracellular medium and one for the extracellular medium, coupled through a dynamical boundary equation at the interface of the two regions (the sarcolemma).  Our meso-microscopic model involves on two small scaling parameters $\e$ and $\delta$ whose are respectively the ratio of the microscopic and mesoscopic scales from the macroscopic scale \cite{henri}. 
 For more details about the modeling of the electrical activity of the heart, the reader is referred to \cite{colli,char}.
 
  Our goal in this paper is to rigorously derive, using the unfolding homogenization method, the macroscopic (homogenized) model of the cardiac tissue which is an approximation of the microscopic bidomain one and consists of equations formulated on the macroscopic scale. The macroscopic model consists of a system of reaction-diffusion equations with homogenized coefficients, approximating the microscopic solution on the two connected components of the domain. In general, the homogenization theory is the analysis of the macroscopic behavior of biological tissues by taking into account their complex microscopic structure. For an introduction to this theory, we cite \cite{sanchez}, \cite{doina},\cite{tartarintro} and \cite{bakhvalov}. Applications of this technique can also be found in modeling solids, fluids, solid-fluid interaction, porous media, composite materials, cells and cancer invasion. This technique also has an interest in the field of numerical analysis where various new computational techniques (finite difference, finite elements and finite volume methods) have been developed, we cite for instance \cite{abdfinite},\cite{bellofinite}. Several methods are related to this theory. Classically, homogenization has been done by the multiple-scale method which was first introduced by Benssousan et al. \cite{ben} and by Sanchez-Palencia \cite{sanchez} for linear and periodic operators. 
  The two-scale convergence method introduced by Nugesteng \cite{ngu} and developped by Allaire et al. \cite{allaire92}. In addition, Allaire and Briane \cite{allairebriane}, Trucu et al. \cite{trucu} introduced a further generalization of the previous method via a three-scale convergence approach for distinct problems. 
   Recently, the periodic unfolding method was introduced by Cioranescu et al. in \cite{doinaunf02} for the study of classical periodic homogenization in the case of fixed domains and adapted to homogenization in domains with holes by Cioranescu et al. \cite{doinaunf06,doinaunf12}. The unfolding reiterated homogenization method was studied first by Meunier and Van Schaftingen \cite{meunier05} for nonlinear partial differential equations with oscillating coefficients and multi scales. The unfolding method is essentially based on two operators: the first represents the unfolding operator and the second operator consists to separate the microscopic and macroscopic scales. The idea of the unfolding operator was introduced firstly in \cite{douglas} under the name "dilation" operator. The name "unfolding operator" was then introduced in \cite{doinaunf02} and deeply studied in \cite{doinaunf06,doinaunf08,doinaunf12}. The interest of this method comes from the fact that we use standard weak or strong convergences in $L^p$ spaces. On the other hand, the unfolding operator  maps functions defined on oscillating domains into functions defined on fixed domains. Hence, the proof of homogenization results be comes quite simple.
   

  Now, we mention some different homogenization methods that are applied to the microscopic bidomain model to obtain the macroscopic bidomain model. Krassowska and Neu \cite{neukra} applied the two-scale asymptotic method to formally obtain this macroscopic model (see also \cite{amar06,henri} for different approaches). Furthermore, Pennachio et al. \cite{colli05} used the tools of the $\Gamma$-convergence method to obtain a rigorous mathematical form of this homogenized macroscopic model. Amar et al. \cite{amar13} studied a hierarchy of electrical conduction problems in biological tissues via two-scale convergence. Recently, the authors in \cite{bendunf19} proved the existence and uniqueness of solution of the microscopic bidomain model based on Faedo-Galerkin method. Further, they used the unfolding homogenization method at two scales to show that the solution of the microscopic biodmain model converges to the solution of the macroscopic one.
  
  
  \textit{The main of contribution of the present paper.} The cardiac tissue structure viewed as double-periodic domain and studied at the three different (micro-meso-macro) scales . The aim is to derive the macroscopic (homogenized) bidomain model of cardiac electro-physiology from the microscopic bidomain model. This paper presents a rigorous mathematical justification for the results obtained in a recent work \cite{BaderDev} based on a three-scale asymptotic homogenization method. For this, we will apply a three-scale unfolding method on the intracellular problem by accounting two scaling parameters $\e$ and $\delta$ to obtain its homogenized equation. Further, to pass to the limit in nonlinear terms, we use the technique involving the unfolding operator introduced in \cite{mariahom} and a Kolmogorov-Riesz compactness's result. Second, we will follow the standard unfolding method on the extracellular one (similar derivation may be found in \cite{bendunf19}). An important motivation for our investigations is their application to unfolding homogenization method proposed to the effective properties of the cardiac tissue at micro-meso-macro scales.  
   
  
  \textit{The outline of the paper is as follows.} In Section \ref{geobid}, we give a precise description of the geometry of cardiac tissue and introduce the microscopic bidomain model featured by two parameters, $\e$ and $\delta,$ characterizing the  micro and mesoscopic scales. Furthermore, the existence of a unique weak solution for the microscopic problem is stated and a priori estimates for the microscopic solutions are derived. Moreover, the main result is presented in this section. In Section \ref{methodunf}, we recall the notion of the unfolding operator and the convergence results used for homogenizaton. Section \ref{unf} is devoted to homogenization procedure. The three-scale unfolding method applied in the intracellular problem is explained in Subsection \ref{unfintra}. The homogenized equation for the intracellular problem is obtained, at two levels, in terms of the coefficients of conductivity matrices and cell problems. The first level of homogenization yields the mesoscopic problem and then we complete the second level to obtain the corresponding homogenized equation. In Subsection \ref{unfextra}, the homogenized equation for the extracellular problem is obtained at one level using a standard unfolding method. Finally, in Subsection \ref{macro}, we derive the macroscopic bidomain model. 
Furthermore, in Appendix \ref{appB}, we give a compactness result for the space $L^{2}(\Omega, B)$ with a Banach space $B$ and $\Omega\subset \R^{d}$.

\section{Bidomain modeling of the heart tissue}\label{geobid}
In this section we define the geometry of cardiac tissue and we present our meso-microscopic bidomain model.
\subsection{Geometry of heart tissue}
The cardiac tissue $\Omega \subset \R^d$ is considered as a heterogeneous periodic domain with a Lipschitz boundary $\pt \Omega$. The structure of the tissue is periodic at meso- and microscopic scales related to two small parameters $\e$ and $\delta$, respectively, see Figure \ref{three_scale}. 

 \begin{figure}[h!]
  \centering
  \includegraphics[width=15cm]{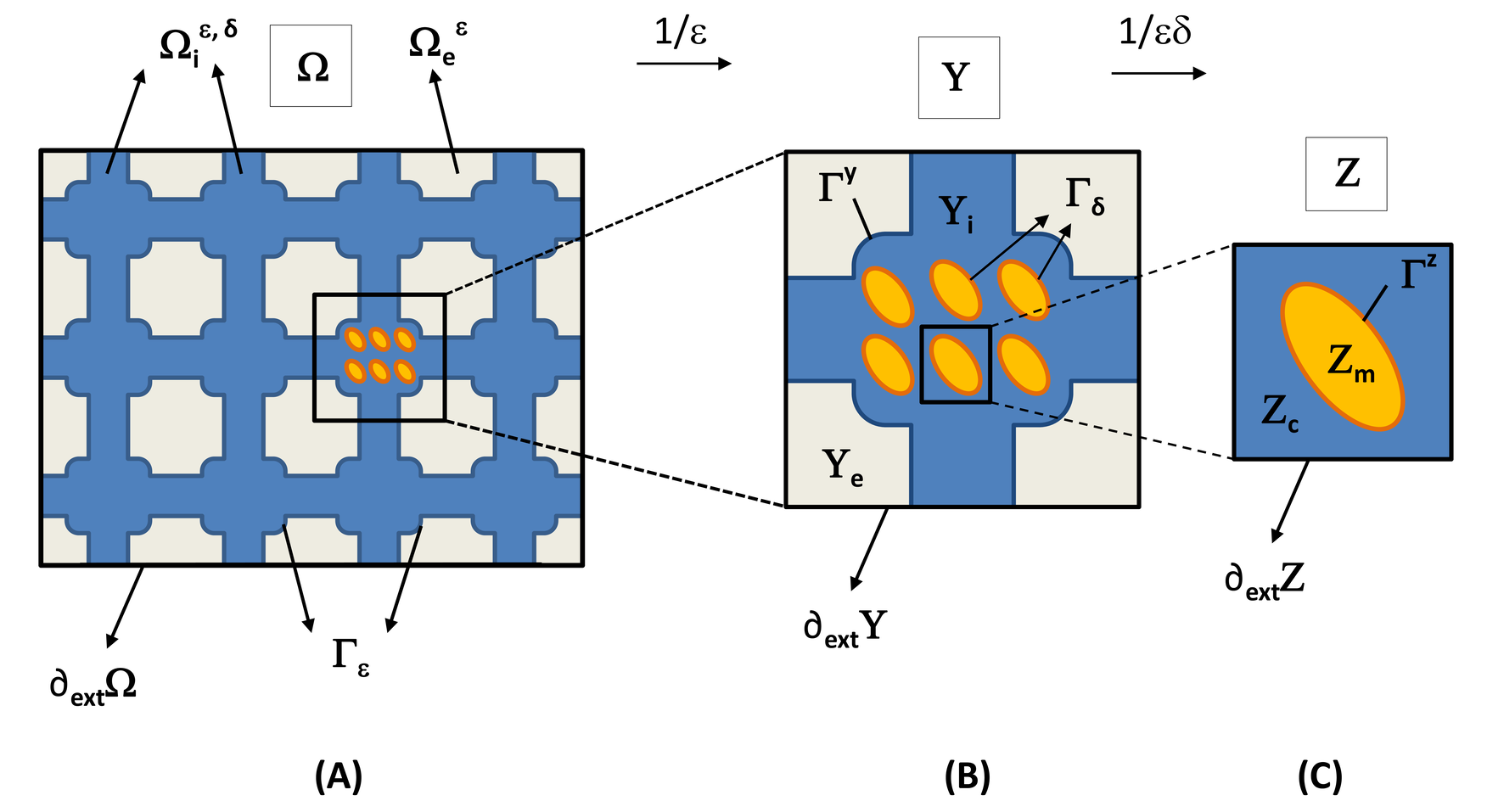}
  \caption{(A) Periodic heterogeneous domain $\Omega$, (B) Reference cell $Y$ at $\e$-structural level and (B) Reference cell $Z$ at $\delta$-structural level }
  \label{three_scale}
 \end{figure} 

 Following the standard approach of the homogenization theory, this structure is featured by two parameters $\ell^\text{mes}$ and $\ell^\text{mic}$ characterizing, respectively, the mesoscopic and microscopic length of a cell in meso- or microscopic domain. Under the two-level scaling, the characteristic lengths $\ell^\text{mes}$ and $\ell^\text{mic}$ are related to a given macroscopic length $L$ (of the cardiac fibers), such that the two scaling parameters $\e$ and $\delta$ are introduced by:
 $$\e=\frac{\ell^\text{mes}}{L} \text{ and } \delta=\frac{\ell^\text{mic}}{L} \ \text{ with } \ \ell^\text{mic}<<\ell^\text{mes}.$$

\paragraph{\textbf{The mesoscopic scale.}} The domain $\Omega$ is composed of two ohmic volumes, called intracellular $\Omega_i^{\e,\delta}$ and extracellular $\Omega_e^{\e}$ medium (see \cite{colli05} for two-scale approach). Geometrically, we find that $\Omega_i^{\e,\delta}$ and $\Omega_e^{\e}$ are two open connected regions such that:
 $$\overline{\Omega}=\overline{\Omega}_{i}^{\e,\delta}\cup \overline{\Omega}_e^{\e}, \ \text{with} \ \Omega_i^{\e,\delta}\cap\Omega_e^{\e}=\emptyset.$$ These two regions are separated by the surface membrane $\Gamma_{\e}$ which is expressed by: $$\Gamma_{\e}=\pt \Omega_i^{\e,\delta} \cap \pt \Omega_e^{\e},$$ assuming that the membrane is regular. We can observe that the domain $\Omega_i^{\e,\delta}$ as a perforated domain obtained from $\Omega$ by removing the holes which correspond to the extracellular domain $\Omega_e^{\e}.$ 
 
  At this $\bm{\varepsilon}$-structural level, we can divide $\Omega$ into $N_{\e}$ small elementary cells $Y_{\e}=\overset{d}{\underset{n=1}{\prod }}]0,\e \, \ell^\text{mes}_n[,$  with $\ell^\text{mes}_1,\dots,\ell^\text{mes}_d$ are positive numbers. These small cells are all equal, thanks to a translation and scaling by $\e,$ to the same cell of periodicity called the reference cell  $Y=\overset{d}{\underset{n=1}{\prod }}]0,\ell^\text{mes}_n[$. Next, we denote by $T_\e^k$ a translation of $\e k$   with $k=( k_1,\dots, k_d ) \in\mathbb{Z}^d$. Note that if the cell considered $Y^k_\e$ is located at the $k_n^{\text{\text{ième}}}$ position according to the direction $n$ of space considered, we can write:
\begin{equation*}
 Y^k_{\e}:=T^k_\e+\e Y=\lbrace \e \xi : \xi \in k_\ell+Y \rbrace,
 \end{equation*}
 with $ k_\ell:=( k_1\ell^\text{mes}_1,\dots,  k_d \ell^\text{mes}_d ).$\\
 Therefore, for each macroscopic variable $x$ that belongs to $\Omega,$ we define the corresponding mesoscopic variable $y\approx\dfrac{x}{\e}$ that belongs to $Y$ with a  translation. Indeed, we have:
 \begin{equation*}
 x \in \Omega \Rightarrow \exists k \in\mathbb{Z}^d  \ \text{ such that }  \ x \in Y^k_\e \Rightarrow x=\e (k_\ell+y) \Rightarrow y=\dfrac{x}{\e}-k_\ell \in Y.
 \end{equation*}

 Since, we will study in the extracellular medium $\Omega^{\e}_e$ the behavior of the functions $u(x,y)$ which are $\textbf{y}$-periodic, so by periodicity we have $u\left( x,\dfrac{x}{\e}-k_\ell\right)  =u\left( x,\dfrac{x}{\e}\right) .$ By notation, we say that $y=\dfrac{x}{\e}$ belongs to $Y.$

 We are assuming that the cells are periodically organized as a regular network of interconnected cylinders at the mesoscale. The mesoscopic unit cell $Y$ splits into two parts: intracellular $Y_i$ and extracellular $Y_e.$ These two parts are separated by a common boundary $\Gamma^{y}.$ So, we have:
 \begin{equation*}
 Y=Y_i \cup Y_e \cup \Gamma^{y}, \quad \Gamma^{y}= \pt Y_i \cap \pt Y_e.
 \end{equation*}  
In a similar way, we can write the corresponding common periodic boundary as follows:
  \begin{equation*}
 \Gamma^k_{\e}:=T^k_\e+\e \Gamma^{y}=\lbrace \e \xi : \xi \in k_\ell+\Gamma^{y} \rbrace,
 \end{equation*}
 with   $T^k_\e$ denote the same previous translation.

 In summary, the intracellular and extracellular medium at mesoscale can be described as the intersection of the cardiac tissue $\Omega$ with the cell $Y^k_{j,\e}$ for $j=i,e:$  
 \begin{equation*}
\Omega^{\e}_i=\Omega \cap \underset{k\in \mathbb{Z}^d}{\bigcup} Y^k_{i,\e}, \quad \Omega^{\e}_e=\Omega \cap \underset{k\in \mathbb{Z}^d}{\bigcup} Y^k_{e,\e}, \quad \Gamma_{\e}=\Omega \cap \underset{k\in \mathbb{Z}^d}{\bigcup} \Gamma^k_{\e},
\end{equation*}
with each cell defined by $Y^k_{j,\e}=T^k_\e+\e Y_j $ for $j=i,e$.

\paragraph{\textbf{The microscopic scale.}}The cytoplasm  contains far more mitochondria described as "the powerhouse of the myocardium" surrounded by another membrane $\Gamma_{\delta}.$ Then, we only assume that the intracellular medium $\Omega_i^{\e,\delta}$ can also be viewed as a periodic perforated domain.
 
 At this $\bm{\delta}$-structural level, we can divide this medium with the same strategy into small elementary cells $Z_{\delta}=\overset{d}{\underset{n=1}{\prod }}]0,\delta \, \ell^\text{mic}_n[,$ with $\ell^\text{mic}_1,\dots,\ell^\text{mic}_d$ are positive numbers. Using a similar translation (noted by $T^{k'}_\delta$), we return to the same reference cell noted by $Z=\overset{d}{\underset{n=1}{\prod }}]0,\ell^\text{mic}_n[.$ Note that if the cell considered $Z^{k'}_{\delta}$ is located at the $k_n^{'\text{ième}}$ position according to the direction $n$ of space considered, we can write:
\begin{equation*}
 Z^{k'}_{\delta}:=T^{k'}_\delta+\delta Z=\lbrace \delta \zeta : \zeta \in k'_{\ell'}+Z \rbrace,
 \end{equation*}
 with $ k'_{\ell'} :=( k'_1\ell^\text{mic}_1,\dots,  k'_d \ell^\text{mic}_d ).$ \\
 Therefore, for each macroscopic variable $x$ that belongs to $\Omega,$ we also define the corresponding microscopic variable $z\approx\dfrac{y}{\delta}\approx\dfrac{x}{\e\delta}$ that belongs to $Z$ with the translation $T^{k'}_\delta$. 
 

  The microscopic reference cell $Z$ splits into two parts: mitochondria part $Z_{m}$ and the complementary part $Z_{c}:=Z \setminus Z_{m}.$ These two parts are separated by a common boundary $\Gamma^{z}.$ So, we have:
 \begin{equation*}
 Z=Z_{m} \cup Z_{c} \cup \Gamma^{z}, \quad \Gamma^{z}= \pt Z_{m}.
 \end{equation*}
By definition, we have $\pt Z_{c}=\pt_{\text{ext}} Z \cup \Gamma^{z}.$
 
  More precisely, we can write the intracellular meso- and microscopic domain $\Omega_i^{\e,\delta}$ as follows:
\begin{equation*}
 \Omega^{\e,\delta}_i=\Omega \cap \underset{k\in \mathbb{Z}^d}{\bigcup} \left( Y^k_{i,\e} \cap \underset{k'\in \mathbb{Z}^d}{\bigcup} Z^{k'}_{c,\delta}\right) 
 \end{equation*}
with $Z^{k'}_{c,\delta}$ is defined by:
\begin{equation*}
 Z^{k'}_{c,\delta} :=T^{k'}_\delta+\delta Z_{c}=\lbrace \delta \zeta : \zeta \in k'_{\ell'}+Z_{c} \rbrace.
 \end{equation*}
  In the intracellular medium $\Omega^{\e,\delta}_i,$ we will study the behavior of the functions  $u(x,y,z)$ which are $\textbf{z}$-periodic, so by periodicity we have $u\left( x,y,\dfrac{x}{\e \delta}-\dfrac{k_{\ell}}{\delta}-k'_{\ell'}\right) =u\left( x,y,\dfrac{x}{\e\delta}\right).$ By notation, we say that $z=\dfrac{x}{\e\delta}$ belongs to $Z.$

 Similarly, we describe the common boundary at microscale as follows: 
 \begin{equation*}
\Gamma_{\delta}=\Omega \cap \underset{k'\in \mathbb{Z}^d}{\bigcup} \Gamma^{k'}_{\delta},
\end{equation*}
where $\Gamma^{k'}_{\delta}$ given by:
\begin{equation*}
 \Gamma^{k'}_{\delta}:=T^{k'}_\delta+\delta \Gamma^z=\lbrace \delta \zeta : \zeta \in k'_{\ell'}+\Gamma^z \rbrace,
 \end{equation*}
 with   $T^{k'}_\delta$ denote the same previous translation.

\subsection{Microscopic bidomain model}
A vast literature exists on the bidomain modeling of the heart,  we refer to \cite{colli02,colli05,colli12}, \cite{henri} for more details. In the sequel, the space-time set $(0,T)\times O$ is denoted by $O_T$ in order to simplify the notation.
\paragraph{\textbf{Basic equations.}} The basic equations modeling the electrical activity of the heart can be obtained as follows. First, we know that the structure of the cardiac tissue can be viewed as composed by two volumes: the intracellular space  $\Omega_i$ (inside the cells) and the extracellular space $\Omega_e$ (outside) separated by the active membrane $\Gamma^{y}$.
  
  Thus, the membrane $\Gamma^{y}$ is pierced by proteins whose role is to ensure ionic transport between the two media (intracellular and extracellular) through this membrane. So, this transport creates an electric current.\\ So by using Ohm's law, the intracellular and extracellular electrical potentials $u_{j}: \Omega_{j,T} \rightarrow \R$ are related to current flows $J_{j}: \Omega_{j,T} \rightarrow \R^d$
\begin{equation*}
J_{j}=\mathrm{M}_{j}\nabla u_{j}, \ \text{in} \ \Omega_{j,T}:=(0,T)\times\Omega_{j},
\end{equation*}
where $\mathrm{M}_{j}$ represents the corresponding conductivity of the tissue for $j=i,e$.\\
In addition, the \textit{transmembrane} potential $v$ is known as the  potential at the membrane $\Gamma^{y}$   which is defined as follows:
\begin{equation*}
v=(u_{i}-u_{e})\vert_{\Gamma^{y}} : (0,T)\times \Gamma^{y} \rightarrow \R.
\end{equation*}
 
  Moreover, we assume the intracellular and extracellular spaces are source-free and thus the intracellular and extracellular potentials $u_{i}$ and $u_{e}$ are solutions to the elliptic equations:
\begin{equation}
\begin{aligned}
&-\text{div}J_{j}=0 \ \text{in} \ \Omega_{j,T}, \text{ for } j=i,e.
\end{aligned}
\label{pb}
\end{equation}
 
 According to the  current conservation law, the transmembrane current $\I_{m}$ is now introduced:
\begin{equation}
\I_m=-J_{i}\cdot n_i=J_{e}\cdot n_e, \ \text{on} \ \Gamma^{y}_{T}:=(0,T)\times \Gamma^{y},
\label{cond_bord} 
\end{equation}
with $n_i$ denotes the unit exterior normal to the boundary $\Gamma_\e$ from intracellular to extracellular domain and $n_e=-n_i$.

 The membrane has both a capacitive property schematized by a capacitor and a resistive property schematized by a resistor. On the one hand, the capacitive property depends on the formation of the membrane which can be represented by a capacitor of capacitance  $C_m$ (the capacity per unit area of the membrane). We recall that the quantity of the charge of a capacitor is $q=C_m v$. Recall that, the capacitive current $\I_c$ is the amount of charge that flows per unit of time:
\begin{equation*}
\I_{c}=\pt_t q=C_m\pt_t v.
\end{equation*}
On the other hand, the resistive property depends on the ionic transport between the intracellular and extracellular media. The resistive current $\I_r$ is defined by the ionic current $\I_{ion}$ measured from the intracellular to the extracellular medium which depends on the transmembrane potential $v$ and the gating variable $w : \Gamma^{y} \rightarrow \R$. Moreover, the total transmembrane current $\I_m$ (see \cite{colli12}) is given by
\begin{equation*}
\I_m=\I_c+\I_r-\I_{app} \text{ on } \Gamma^{y}_{T},
\end{equation*} 
with $\I_{app}$ is the applied current per unit area of the membrane surface.\\
Consequently, the transmembrane potential $v$ satisfies the following dynamic condition on $\Gamma^{y}$ involving the gating variable $w : \Gamma^{y} \rightarrow \R$:
\begin{equation}
\begin{aligned}
&\I_m=  C_m\pt_t v+\I_{ion}(v,w)-\I_{app}  &\text{ on } \Gamma^{y}_{T},
\\ &\pt_t w-H(v,w)=0 &\text{ on } \Gamma^{y}_{T}.
\label{cond_dyn}
\end{aligned}
\end{equation}
Herein, the functions $H$ and $\I_{ion}$ correspond to an ionic model of membrane dynamics.
 
 In addition, we assume that the no-flux boundary condition at the interface $\Gamma^{z}$ is given by:
\begin{equation}
\mathrm{M}_{i}\nabla u_{i}\cdot n_{z}= 0 \quad \text{ on } \Gamma^{z}_{T}:=(0,T)\times \Gamma^z,
\label{cond_dyn_z}
\end{equation}
with $n_{z}$ denotes the unit exterior normal to the boundary $\Gamma^{z}$.\\

\paragraph{\textbf{Statement of the mathematical model and main results.}} $\,$ Cardiac tissues have a number of important inhomogeneities, particularly those related to intercellular communications. The dimensionless analysis done correctly makes the problem simpler and clearer \cite{henri,colli12}. So, we can convert system \eqref{pb}-\eqref{cond_dyn_z} to the following non-dimensional form:
\begin{subequations}
\begin{align}
-\nabla_{\widehat{x}}\cdot\left( \widehat{\mathrm{M}}_{i}^{\e,\delta}\nabla_{\widehat{x}} \widehat{u}_{i}^{\e,\delta}\right)  &=0 &\text{ in } \Omega_{i,T}^{\e,\delta}:=(0,T)\times\Omega_{i}^{\e,\delta}, 
\label{bid_intra}
\\ -\nabla_{\widehat{x}}\cdot\left( \widehat{\mathrm{M}}_{e}^{\e}\nabla_{\widehat{x}} \widehat{u}_{e}^{\e} \right)  &=0 &\text{ in } \Omega_{e,T}^{\e}:=(0,T)\times\Omega_{e}^\e,
\label{bid_extra}
\\ \e\left( \pt_{\widehat{t}} \widehat{v}_\e+\widehat{\I}_{ion}(\widehat{v}_\e,\widehat{w}_\e)-\widehat{\I}_{app,\e}\right) &=\widehat{\I}_m &\ \text{on} \ \Gamma_{\e,T}:=(0,T)\times\Gamma_{\e},
\label{bid_onface}
\\-\widehat{\mathrm{M}}_{i}^{\e,\delta}\nabla_{\widehat{x}} \widehat{u}_{i}^{\e,\delta} \cdot n_i=\widehat{\mathrm{M}}_{e}^{\e}\nabla_{\widehat{x}} \widehat{u}_{e}^{\e} \cdot n_e & =\widehat{\I}_m &\ \text{on} \ \Gamma_{\e,T},
\label{bid_mesocont}
\\ \pt_{\widehat{t}} \widehat{w}_\e-\widehat{H}(\widehat{v}_\e,\widehat{w}_\e) &=0 & \text{ on }  \Gamma_{\e,T},
\label{bid_dyn}
\\ \widehat{\mathrm{M}}_{i}^{\e,\delta}\nabla_{\widehat{x}} \widehat{u}_{i}^{\e,\delta} \cdot n_{z} & =0 &\ \text{on} \ \Gamma_{\delta,T}.
\label{bid_microcont}
\end{align}
\label{pbscale}
\end{subequations}
Note that each equation corresponds to the following sense:
\eqref{bid_intra} Intra quasi-stationary conduction, \eqref{bid_extra} Extra quasi-stationary conduction, \eqref{bid_onface} Reaction surface condition, \eqref{bid_mesocont} Meso-continuity equation, \eqref{bid_dyn} Dynamic coupling, \eqref{bid_microcont} Micro-boundary condition.
 
 For convenience, the superscript $ \  \widehat{\cdot} \ $ of the dimensionless variables is omitted. 
Observe that the bidomain equations are invariant with respect to the scaling parameters $\e$ and $\delta$. Now we define the rescaled electrical potential as follows:
$$u_{i}^{\e,\delta}(t,x):=u_{i}\left( t, x,\frac{x}{\e},\frac{x}{\e\delta}\right), \quad u_{e}^{\e}(t,x):= u_{e}\left( t, x,\frac{x}{\e}\right).$$

Analogously, we obtain the rescaled transmembrane potential  $v_\e=(u_{i}^{\e,\delta}-u_{e}^{\e}){\vert_{\Gamma_{\e,T}}}$ and gating variable $w_\e.$ In general, the functions $v_\e$ and $w_\e$ does not depend on $\delta,$ we omit the index $\delta$  when non confusion arises. Next, we define also the following rescaled symmetric Lipschitz continuous conductivity matrices:  \begin{equation}
\mathrm{M}_{i}^{\e,\delta}(x):=\mathrm{M}_{i}\left( x,\frac{x}{\e},\frac{x}{\e\delta}\right) \ \text{ and } \ \mathrm{M}_{e}^{\e}(x):= \mathrm{M}_{e}\left( x,\frac{x}{\e}\right),
\label{M_ie}
\end{equation}
satisfying the elliptic and periodicity conditions: there exist constants $\alpha, \beta \in \R,$ such that $0<\alpha<\beta$ and for all $\lambda\in \R^d:$
\begin{subequations}
\begin{align}
&\mathrm{M}_j\lambda\cdot\lambda \geq \alpha\abs{\lambda}^2, 
\\& \abs{\mathrm{M}_j\lambda}\leq \beta \abs{\lambda}, \text{ for }  j=i,e,
\\& \mathrm{M}_i \ \mathbf{y}\text{- and }\mathbf{z}\text{-periodic}, \quad \mathrm{M}_e \ \mathbf{y}\text{-periodic}.
\end{align}
\label{A_M_ie}
\end{subequations}
We complete system \eqref{pbscale} with no-flux boundary conditions: 
\begin{equation*}
\left( \mathrm{M}_{i}^{\e,\delta}\nabla u^{\e,\delta}_{i}\right) \cdot \mathbf{n}=\left( \mathrm{M}_{e}^{\e}\nabla u_{e}^{\e}\right) \cdot\mathbf{n}=0 \ \text{ on } \  (0,T)\times \pt_{\text{ext}} \Omega,
\end{equation*}
where $\mathbf{n}$ is the outward unit normal to the exterior boundary of $\Omega$. We impose initial conditions on the transmembrane potential and the gating variable:

\begin{equation}
v_\e(0,x)=v_{0,\e}(x)\quad\text{and}\quad w_\e(0,x)=w_{0,\e}(x) \quad \text{ a.e. on} \ \Gamma_{\e}.
\label{cond_ini_vw}
\end{equation}
We mention for instance some assumptions on the ionic functions, the source term and the initial data:\\
\textbf{Assumptions on the ionic functions.} The ionic current $\I_{ion}(v,w)$ can be decomposed into $I_{1,ion}(v): \R \rightarrow \R$ and $I_{2,ion}(w); \R \rightarrow \R$, where $\I_{ion}(v,w)=I_{1,ion}(v)+I_{2,ion}(w)$. Furthermore, $I_{1,ion}$ is considered as a $C^1$ function, $I_{2,ion}$ and $H : \R^2 \rightarrow \R$ are linear functions. Also, we assume that there exists $r\in (2,+\infty)$ and constants $\alpha_1,\alpha_2,\alpha_3, \alpha_4,\alpha_5, C>0$ and $\beta_1, \beta_2>0$ such that:
\begin{subequations}
\begin{align}
&\dfrac{1}{\alpha_1} \abs{v}^{r-1}\leq \abs{I_{1,ion}(v)}\leq \alpha_1\left(  \abs{v}^{r-1}+1\right), \,\abs{I_{2,ion}(w)}\leq \alpha_2(\abs{w}+1), 
\\& \abs{H(v,w)}\leq \alpha_3(\abs{v}+\abs{w}+1),\text{ and }
    I_{2,ion}(w)v-\alpha_4H(v,w)w\geq \alpha_5 \abs{w}^2,
\\& \tilde{I}_{1,ion} : z\mapsto I_{1,ion}(z)+\beta_1 z+\beta_2 \text{ is strictly increasing with } \lim \limits_{z\rightarrow 0} \tilde{I}_{1,ion}(z)/z=0,
\\& \forall z_1,z_2 \in \R,\,\,\left(\tilde{I}_{1,ion}(z_1)-\tilde{I}_{1,ion}(z_2) \right)(z_1-z_2)\geq \dfrac{1}{C} \left(1+\abs{z_1}+\abs{z_2} \right)^{r-2} \abs{z_1-z_2}^{2}. 
\end{align}
\label{A_H_I}
\end{subequations}
\begin{rem} Few models to these functions are available, we mention for  instance the Hodgkin-Huxley model \cite{hodgkin}, the Mitchell-Schaeffer model \cite{mitchell}, the Roger-McCulloch model \cite{roger} and the Aliev-Panfilov model \cite{aliev}. Here, we take the Fitzhugh-Nagumo model \cite{fitz,nagumo} which is defined as follows
  \begin{subequations}
  \begin{align}
  H(v,w)&= av-bw, \\  \I_{ion}(v,w)&=\left( \lambda v (1-v)(v-\theta) \right) + (-\lambda w):= I_{1,ion}(v)+I_{2,ion}(w)
  \end{align}
  \label{ionic_model}
  \end{subequations}
  where $a, b, \lambda, \theta$ are given parameters with $a,b\geq 0, \ \lambda<0$ and $0<\theta<1.$
\end{rem}
\textbf{Assumptions on the source term.} There exists a constant $C>0$ independent of $\e$ such that the source term $\I_{app,\e}$ satisfies the following estimation:
\begin{equation}
\norm{\e^{1/2}\I_{app,\e}}_{L^{2}(\Gamma_{\e,T})}\leq C,
\label{A_iapp}
\end{equation}
where $\Gamma_{\e,T}:=(0,T)\times\Gamma_{\e}.$

 \textbf{Assumptions on the initial data.} The initial conditions $v_{0,\e}$ and $w_{0,\e}$ satisfy the following estimation:
\begin{equation}
\norm{\e^{1/r}v_{0,\e}}_{L^{r}(\Gamma_{\e})}+\norm{\e^{1/2}v_{0,\e}}_{L^{2}(\Gamma_{\e})}+\norm{\e^{1/2}w_{0,\e}}_{L^{2}(\Gamma_{\e})}\leq C,
\label{A_vw0}
\end{equation}
for some constant $C$ independent of $\e.$ Moreover, $v_{0,\e}$ and $w_{0,\e}$ are assumed to be traces of uniformly bounded sequences in $C^{1}(\overline{\Omega}).$ 

Clearly, the equations in \eqref{pbscale} are invariant under the simultaneous change of $u_i^{\e,\delta}$ and $u_{e}^{\e}$ into $u_i^{\e,\delta}+k;$ $u_{e}^{\e}+k,$ for any $k\in \R.$ Hence, we may impose the following normalization condition:
\begin{equation}
\int_{\Omega_{e}^{\e}}u_{e}^{\e}(t,x)dx=0 \text{ for a.e. } t\in(0,T).
\label{normalization_cond} 
\end{equation}
 
   We start by stating the weak formulation of the microscopic bidomain model as given in the following definition.
  
\begin{defi}[Weak formulation] A weak solution of problem \eqref{pbscale}-\eqref{cond_ini_vw} is a four tuple \\ $(u_{i}^{\e,\delta},u_{e}^{\e},v_{\e},w_{\e})$ such that $u_{i}^{\e,\delta}\in L^{2}\left(0,T;H^{1}\left( \Omega_{i}^{\e,\delta}\right)\right),$ $u_{e}^{\e}\in L^{2}\left(0,T;H^{1}(\Omega_{e}^{\e})\right) ,$ $v_{\e}$ $\in L^{2}(0,T;H^{1/2}(\Gamma_{\e}))$ $\cap L^{r}(\Gamma_{\e,T}),$ $r \in (2,+\infty),$ $w_\e \in L^{2}(\Gamma_{\e,T}),$  $\pt_t v_\e \in L^{2}(0,T;H^{-1/2}(\Gamma_{\e}))$ $+ L^{r/(r-1)}(\Gamma_{\e,T}),$ $\pt_t w_\e \in L^{2}(\Gamma_{\e,T})$ and satisfying the following weak formulation for a.e. $t\in(0,T):$ 
\begin{equation}
\begin{aligned}
\iint_{\Gamma_{\e,T}} \e\pt_t v_\e\varphi \ d\sigma_xdt &+\iint_{\Omega_{i,T}^{\e,\delta}}\mathrm{M}_{i}^{\e,\delta}(x)\nabla u_{i}^{\e,\delta}\cdot\nabla\varphi_i \ dxdt
+\iint_{\Omega_{e,T}^{\e}}\mathrm{M}_{e}^{\e}(x)\nabla u_{e}^{\e}\cdot\nabla\varphi_e \ dxdt
\\& +\iint_{\Gamma_{\e,T}} \e\I_{ion}(v_\e,w_\e)\varphi\ d\sigma_xdt=\iint_{\Gamma_{\e,T}} \e\I_{app,\e}\varphi\ d\sigma_xdt,
\end{aligned}
\label{Fv_i_ini}
\end{equation}
\begin{equation}
\iint_{\Gamma_{\e,T}} \pt_t w_\e\phi \ d\sigma_xdt-\iint_{\Gamma_{\e,T}} H(v_\e,w_\e)\phi \ d\sigma_xdt=0,
\label{Fv_d_ini}
\end{equation}
for all $\varphi_i \in L^{2}\left(0,T;H^{1}\left( \Omega_{i}^{\e,\delta}\right)\right), \ \varphi_e \in L^{2}\left(0,T;H^{1}\left( \Omega_{e}^{\e}\right)\right)$ with $\varphi=\left(\varphi_{i}-\varphi_{e}\right)\vert_{\Gamma_{\e,T}}$ $\in$ $L^{2}\left(0,T;H^{1/2}(\Gamma_{\e})\right)$ $\cap L^{r}(\Gamma_{\e,T})$ and $\phi \in L^{2}(\Gamma_{\e,T})$. Moreover, the weak formulation makes sense in view of the following initial conditions:
\begin{equation}
v_\e(0,x)=v_{0,\e}(x)\quad\text{and}\quad w_\e(0,x)=w_{0,\e}(x) \quad \text{ a.e. on} \ \Gamma_{\e}.
\label{cond_ini_vw_weak}
\end{equation}
\label{Fv} 
\end{defi}

 Then, the existence of the weak solution is given in the following theorem with the proof can be found in \cite{bendunf19} where the mesoscopic domain is ignored.
\begin{thm}[Microscopic Bidomain Model] Assume that the conditions \eqref{M_ie}-\eqref{normalization_cond} hold. Then the microscopic bidomain problem \eqref{pbscale}-\eqref{cond_ini_vw} possesses a unique weak solution in the sense of Definition \ref{Fv} for every fixed $\e,$ $\delta >0$.
Moreover, there exists a constant $C>0$ not depending on $\e$ and $\delta$ such that:
\begin{equation}
\norm{\sqrt{\e}v_\e}_{L^{\infty}(0,T;L^2(\Gamma_\e))}+\norm{\sqrt{\e}w_\e}_{L^{\infty}(0,T;L^2(\Gamma_\e))} \leq C,
\label{E_vw}
\end{equation}
\begin{equation}
\norm{u_{i}^{\e,\delta}}_{L^{2}\left(0,T;H^{1}\left( \Omega_{i}^{\e,\delta}\right) \right)}\leq C , \quad \norm{u_{e}^{\e}}_{L^{2}\left(0,T;H^{1}\left(\Omega_{e}^{\e}\right) \right)}\leq C,
\label{E_u}
\end{equation}
\begin{equation}
\norm{\e^{1/r}v_\e}_{L^{r}(\Gamma_{\e,T})}\leq C \quad \text{and}\quad \norm{\e^{(r-1)/r}\; \mathrm{I}_{1,ion}(v_\e)}_{L^{r/(r-1)}(\Gamma_{\e,T})}\leq C.
\label{E_vr}
\end{equation}
Furthermore, if $v_{0,\e} \in H^{1/2}(\Gamma_{\e})\cap L^{r}(\Gamma_\e)$, there exists a constant $C>0$ not depending on $\e$ and $\delta$ such that:
\begin{equation}
\norm{\sqrt{\e}\pt_t v_\e}_{L^{2}(\Gamma_{\e,T})}+\norm{\sqrt{\e}\pt_t w_\e}_{L^2(\Gamma_{\e,T})}\leq C.
\label{E_dtvw}
\end{equation}
\label{thm_micro}
\end{thm}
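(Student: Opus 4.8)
The plan is to establish existence by a Faedo--Galerkin scheme, to derive the stated bounds as a priori estimates uniform in $\e$ and $\delta$, and to obtain uniqueness from the strict monotonicity of $\tilde{I}_{1,ion}$. Since the no-flux condition \eqref{bid_microcont} on $\Gamma_\delta$ is homogeneous Neumann, it enters the weak formulation \eqref{Fv_i_ini} only through the geometry of the doubly-perforated domain $\Omega_i^{\e,\delta}$ and contributes no new boundary term; consequently the construction of \cite{bendunf19} carries over once the functional setting is posed on $\Omega_i^{\e,\delta}$ in place of the singly-perforated intracellular domain. Concretely, I would fix a countable basis of $H^1(\Omega_i^{\e,\delta})\times H^1(\Omega_e^{\e})$ whose traces span a dense subspace of $H^{1/2}(\Gamma_\e)\cap L^r(\Gamma_\e)$, project \eqref{Fv_i_ini}--\eqref{Fv_d_ini} onto the finite-dimensional subspaces, and observe that the elliptic blocks determine $u_i^{\e,\delta},u_e^{\e}$ from the interface datum $v_\e$, the additive constant being fixed by \eqref{normalization_cond}. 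The discrete system thus reduces to an ODE system for the coefficients of $(v_\e,w_\e)$, whose local solvability follows from the Carath\'eodory theory and which is then extended globally by the estimates below.

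The core is the energy estimate, obtained by testing \eqref{Fv_i_ini} with $\varphi_i=u_i^{\e,\delta}$, $\varphi_e=u_e^{\e}$ (so that $\varphi=v_\e$ on $\Gamma_\e$) and \eqref{Fv_d_ini} with $\phi=\e\alpha_4 w_\e$, then adding the two. The time terms furnish $\tfrac{\e}{2}\tfrac{d}{dt}\bigl(\norm{v_\e}_{L^2(\Gamma_\e)}^2+\alpha_4\norm{w_\e}_{L^2(\Gamma_\e)}^2\bigr)$, the two diffusion terms are bounded below by $\alpha\bigl(\norm{\nabla u_i^{\e,\delta}}^2+\norm{\nabla u_e^{\e}}^2\bigr)$ via the ellipticity \eqref{A_M_ie}, and the coupling assembles into $\e\bigl(I_{2,ion}(w_\e)v_\e-\alpha_4 H(v_\e,w_\e)w_\e\bigr)\geq\alpha_5\,\e\abs{w_\e}^2$ by the coercivity in \eqref{A_H_I}. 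The leading nonlinearity is controlled by the growth $I_{1,ion}(v)v\gtrsim\abs{v}^r-1$, producing the dissipative term $c\,\e\norm{v_\e}_{L^r(\Gamma_\e)}^r$, while the source is absorbed by Young's inequality together with \eqref{A_iapp}. Integrating in time and using \eqref{A_vw0} for the data, Gronwall's lemma yields \eqref{E_vw} and the first bound in \eqref{E_vr}; the dual bound on $I_{1,ion}(v_\e)$ then follows from the upper growth in \eqref{A_H_I}. The bound \eqref{E_u} requires a Poincar\'e inequality on $\Omega_e^{\e}$ (using \eqref{normalization_cond}) and a Poincar\'e--trace inequality on $\Omega_i^{\e,\delta}$ estimating $u_i^{\e,\delta}$ by its gradient and its trace $v_\e$; the crucial point is that these hold with constants independent of \emph{both} $\e$ and $\delta$, which I would secure by rescaling onto the fixed reference cells $Y$ and $Z$ together with the connectedness of the intracellular phase across both periodic structures.

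To pass from the Galerkin approximations to a genuine solution I would extract, from the uniform bounds, weak-$*$ limits of $u_i^{\e,\delta},u_e^{\e}$ in $L^2(0,T;H^1)$, of $v_\e$ in $L^2(0,T;H^{1/2})\cap L^r$, and of $w_\e$ in $L^2(\Gamma_{\e,T})$. The only delicate passage is the nonlinear term $I_{1,ion}(v_\e)$: since $I_{2,ion}$ and $H$ are linear their limits are immediate, whereas $I_{1,ion}$ demands strong convergence of $v_\e$ on $\Gamma_{\e,T}$. For fixed $\e,\delta$ this is provided by an Aubin--Lions compactness argument based on the time-derivative control, after which the limit of $I_{1,ion}(v_\e)$ is identified either directly or by a Minty-type monotonicity argument resting on \eqref{A_H_I}. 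I expect this identification, and the compactness it relies on, to be the main technical obstacle, since it is where the Carath\'eodory construction, the $L^r$ growth, and the dual-space membership of $\pt_t v_\e$ must all be reconciled.

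Uniqueness follows by taking two solutions with identical data, subtracting, and testing the difference of \eqref{Fv_i_ini} with the difference of the potentials and the difference of \eqref{Fv_d_ini} with the difference of the gating variables: the strict monotonicity estimate in \eqref{A_H_I} controls the $I_{1,ion}$ contribution with the correct sign, the linear terms are handled by Cauchy--Schwarz and Young, and Gronwall's lemma closes the argument. Finally, the bound \eqref{E_dtvw} is obtained, under the additional regularity $v_{0,\e}\in H^{1/2}(\Gamma_\e)\cap L^r(\Gamma_\e)$, by testing the time-differentiated equations with $\pt_t v_\e$ and $\pt_t w_\e$ (rigorously, through difference quotients in time), the regularity of the initial data being precisely what is needed to bound the resulting data terms uniformly in $\e$ and $\delta$.
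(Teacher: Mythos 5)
Your proposal follows essentially the same route as the paper, which does not give a detailed proof but explicitly defers to the Faedo--Galerkin construction of \cite{bendunf19} and observes, exactly as you do, that the homogeneous Neumann condition on $\Gamma_\delta$ adds no new boundary term so the energy estimates of the two-scale case carry over to the doubly-perforated domain. Your sketch (energy estimate with the test pair $(u_i^{\e,\delta},u_e^{\e})$ and $\e\alpha_4 w_\e$, $\e$- and $\delta$-uniform Poincar\'e/trace inequalities for \eqref{E_u}, monotonicity for uniqueness, and time-differentiated testing for \eqref{E_dtvw}) is a correct and more detailed account of that same argument.
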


The existence and uniqueness of weak solutions for the microscopic bidomain problem \eqref{pbscale}-\eqref{cond_ini_vw}  for every fixed $\e,$ $\delta >0$ is standard, e.g., by using the Faedo-Galerkin method based on a priori estimates \eqref{E_vw}-\eqref{E_dtvw}. We notice that we get the same energy estimates as in \cite{bendunf19}, this comes from the consideration of homogeneous Neumann type conditions on the microscopic scale.

\paragraph{\textbf{Main results.}} In this part, we highlight the main results obtained in our paper. Based on a priori estimates and the convergence results of unfolding homogenization method, we can pass to the limit in the microscopic equations and derive the following macroscopic problem: 
\begin{thm}[Macroscopic Bidomain Model] 
A sequence of solutions $\Bigl((u_{i}^{\e,\delta})_{\e,\delta},(u_{e,\e})_\e,(w_{\e})_{\e}\Bigl)$ of the microscopic bidomain model \eqref{pbscale}-\eqref{cond_ini_vw} converges $($as $\e,\delta \to 0)$ to a weak solution $(u_i,u_e,w)$ with $v=u_i-u_e$, $u_i,u_e\in L^2(0,T;H^1(\Omega))$, $v \in L^2(0,T;H^1(\Omega))\cap L^r(\Omega_T)$, $\partial_t v\in L^2(0,T;H^{-1}(\Omega))+ L^{r/(r-1)}(\Omega_T)$ and $w\in C(0,T;L^2(\Omega))$, of the macroscopic problem
\begin{equation}
\begin{aligned}
\mu_{m}\pt_{t} v+\nabla \cdot\left(\widetilde{\mathbf{M}}_{e}\nabla u_{e}\right) +\mu_{m}\I_{ion}(v,w) &= \mu_{m}\I_{app} &\text{ in } \Omega_{T},
\\ \mu_{m}\pt_{t} v-\nabla \cdot\left( \doublewidetilde{\mathbf{M}}_{i}\nabla u_{i}\right)+\mu_{m}\I_{ion}(v,w) &= \mu_{m} \I_{app} &\text{ in } \Omega_{T},
\\ \pt_{t} w-H(v,w) &=0 & \text{ on }  \Omega_{T},
\label{pb_macro}
\end{aligned}
\end{equation}
completed with no-flux boundary conditions on $u_i, u_e$ on $\pt_{\text{ext}} \Omega:$ 
\begin{equation*}
\left(\widetilde{\mathbf{M}}_{e}\nabla u_{e}\right)\cdot\mathbf{n}=\left( \doublewidetilde{\mathbf{M}}_{i}\nabla u_{i}\right) \cdot\mathbf{n}=0 \ \text{ on } \  \Sigma_T:=(0,T)\times\pt_{\text{ext}} \Omega,
\end{equation*}
and initial conditions for the transmembrane potential $v$ and the gating variable $w :$ 
\begin{equation}
v(0,x)=v_0(x)\qquad\text{and}\qquad w(0,x)=w_0(x) \text{ a.e. on }  \Omega,
\label{cond_ini}
\end{equation}
where $v_{0}, w_{0} \in L^2(\Omega)$ are defined in Remark \ref{conv_trace_ini_norm}-\ref{conv_cond_ini}. Here, $\mu_{m}=\abs{\Gamma^{y}}/\abs{Y}$ is the ration between the surface membrane and the volume of the reference cell. Moreover, $\mathbf{n}$ is the outward unit normal to the exterior boundary of $\Omega.$ The first-level homogenized conductivity matrices $\widetilde{\mathbf{M}}_j=\left( \widetilde{\mathbf{m}}^{pq}_j\right)_{1\leq p,q \leq d}$ for $j=i,e$ and the second-level one $\doublewidetilde{\mathbf{M}}_i=\left( \doublewidetilde{\mathbf{m}}^{pq}_i\right)_{1\leq p,q \leq d}$ are respectively defined by:
\begin{subequations}
\begin{align}
&\widetilde{\mathbf{m}}^{pq}_e:=\dfrac{1}{\abs{Y}}\overset{d}{\underset{k=1}{\sum}}\displaystyle\int_{Y_e}\left( \mathrm{m}_e^{pq}+\mathrm{m}^{pk}_{e}\dfrac{\pt \chi_e^q}{\pt y_k}\right) \ dy,\quad  \widetilde{\mathbf{m}}^{pq}_i:=\dfrac{1}{\abs{Z}}\overset{d}{\underset{\ell=1}{\sum}}\displaystyle\int_{Z_{c}}\left( \mathrm{m}_i^{pq}+\mathrm{m}^{p\ell}_{i}\dfrac{\pt \theta_i^q}{\pt z_\ell}\right) \ dz,
\label{tilde_m_i}
\end{align}
\begin{equation}
\begin{aligned}
& \doublewidetilde{\mathbf{m}}^{pq}_i:=\dfrac{1}{\abs{Y}}\overset{d}{\underset{k=1}{\sum}}\displaystyle \int_{Y_{i}}  \left(\widetilde{\mathbf{m}}_i^{pk} \dfrac{\pt \chi_{i}^q}{\pt y_k}(y)+\widetilde{\mathbf{m}}_i^{pq}\right) dy
\\& \quad \quad = \dfrac{1}{\abs{Y}} \dfrac{1}{\abs{Z}}\overset{d}{\underset{k,\ell=1}{\sum}}\displaystyle \int_{Y_{i}} \int_{Z_{c}} \left[\displaystyle \left(\mathrm{m}^{pk}_{i}+ \mathrm{m}^{p\ell}_{i}\dfrac{\pt \theta_{i}^{k}}{\pt z_\ell}\right) \dfrac{\pt \chi_{i}^q}{\pt y_k}(y)+\left(\mathrm{m}^{pq}_{i}+ \mathrm{m}^{p\ell}_{i}\dfrac{\pt \theta_{i}^{q}}{\pt z_\ell}\right)\right] \ dzdy,
\end{aligned}
\label{titilde_m_i}
\end{equation}
\end{subequations}
where the components $\chi_{e}^{q}$ of $\chi_e$ and $\chi_{i}^{q}$  of $\chi_i$ are respectively the corrector functions, solutions of the $\e$-cell problems:
\begin{subequations}
\begin{equation}
\begin{cases}
-\nabla_{y}\cdot\left( \mathrm{M}_{e}\nabla_{y} \chi_{e}^q\right) =\nabla_{y}\cdot\left( \mathrm{M}_{e} e_{q}\right) \ \text{in} \ Y_{e},
\\ \chi_e^q \ y\text{-periodic}, 
\\ \mathrm{M}_{e} \nabla_y \chi_e^q \cdot n_e= - (\mathrm{M}_{e}e_q )\cdot n_e \text{ on } \Gamma^{y},
\end{cases}
\end{equation}
\begin{equation}
\begin{cases}
-\nabla_{y}\cdot\left( \widetilde{\mathbf{M}}_{i}\nabla_{y} \chi_{i}^q\right) =\nabla_{y}\cdot\left( \widetilde{\mathbf{M}}_{i} e_{q}\right) \ \text{in} \ Y_{i},
\\ \chi_{i}^{q} \ y\text{-periodic},
\\ \widetilde{\mathbf{M}}_{i} \nabla_y \chi_{i}^q \cdot n_{i}=-\left(\widetilde{\mathbf{M}}_{i} e_q \right)\cdot n_{i} \ \text{on} \ \Gamma^{y},  
 \end{cases}
 \end{equation}
 \end{subequations}
and the component $\theta_{i}^{q}$ of $\theta_{i}$ is the corrector function, solution of the $\delta$-cell problem:
\begin{equation}
\begin{cases}
\nabla_{z}\cdot\left( \mathrm{M}_{i}\nabla_{z} {\theta}_{i}^q\right)=\nabla_{z}\cdot\left( \mathrm{M}_{i}e_{q}\right) \ \text{in} \ Z_{c},\\ \theta_{i}^q \ y\text{- and }z\text{-periodic}, \\ \mathrm{M}_{i}\nabla_z \theta_{i}^q \cdot n_{z}=-(\mathrm{M}_{i}e_q)\cdot n_{z} \ \text{on} \ \Gamma^{z}, 
\end{cases}
\end{equation}
for $e_q$, $q=1,\dots,d,$ the standard canonical basis in $\R^d.$ System  \eqref{pb_macro}-\eqref{cond_ini} corresponds to the sought macroscopic equations. Finally, note that we close the problem by the normalization condition on the extracellular potential for almost all $t\in [0,T],$
\begin{equation*}
\int_{\Omega} u_{e} (t,x)dx=0.
\end{equation*}
\label{thm_macro}
\end{thm}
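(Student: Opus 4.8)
The plan is to homogenize the microscopic bidomain system \eqref{pbscale}–\eqref{cond_ini_vw} by applying the unfolding machinery of Section \ref{methodunf} separately to the intracellular and extracellular problems, using the $\e,\delta$-uniform a priori estimates \eqref{E_vw}–\eqref{E_dtvw} of Theorem \ref{thm_micro} to extract weak (and, where needed, strong) limits. The whole argument rests on the uniformity of these bounds, which lets us build two-scale and three-scale limits $u_i(t,x,y,z)$, $u_e(t,x,y)$, $v(t,x)$, $w(t,x)$ living on the fixed reference cells $Y$, $Z$. First I would set up the unfolded weak formulation: apply the interior unfolding operator to the bulk integrals in \eqref{Fv_i_ini} and the boundary unfolding operator to the surface integrals over $\Gamma_{\e,T}$ and $\Gamma_{\delta,T}$, so that all oscillating domains are replaced by the fixed products $\Omega\times Y$, $\Omega\times Y\times Z$, $\Omega\times\Gamma^y$, etc. The scaling factors $\e$ and $\e^{1/2}$, $\e^{1/r}$ carried by the membrane terms are exactly what make the unfolded surface integrals converge to volume-type integrals weighted by $\mu_m=\abs{\Gamma^y}/\abs{Y}$.

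\emph{Intracellular, two levels.} The heart of the derivation is the three-scale unfolding of \eqref{bid_intra}. I would first treat the $\delta$-scale: choosing oscillating test functions of the form $\varphi_i(t,x)+\delta\,\e\,\psi(t,x,x/\e)\,\rho(x/(\e\delta))$ and passing to the limit $\delta\to0$ produces the $\delta$-cell problem for the corrector $\theta_i^q$ on $Z_c$ and yields the first-level (mesoscopic) conductivity $\widetilde{\mathbf M}_i$ of \eqref{tilde_m_i}. I would then repeat the unfolding at the $\e$-scale with test functions $\varphi_i(t,x)+\e\,\psi(t,x,x/\e)$, extracting the $\e$-cell problem for $\chi_i^q$ on $Y_i$ and composing the two correctors to obtain the doubly-homogenized matrix $\doublewidetilde{\mathbf M}_i$ of \eqref{titilde_m_i}. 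The extracellular equation \eqref{bid_extra} is homogenized at a single level by the standard unfolding method, giving $\chi_e^q$ and $\widetilde{\mathbf M}_e$ exactly as in \cite{bendunf19}.

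\emph{Nonlinear membrane terms.} The delicate point is passing to the limit in $\iint_{\Gamma_{\e,T}}\e\,\I_{ion}(v_\e,w_\e)\varphi$ and in $\pt_t v_\e$, $H(v_\e,w_\e)$, since these are nonlinear and live on the moving interface $\Gamma_\e$. For this I would unfold the boundary terms and establish \emph{strong} convergence of the unfolded transmembrane potential in $L^2(\Omega_T\times\Gamma^y)$ (and $L^{r}$ for the $I_{1,ion}$ term): the bound \eqref{E_dtvw} on $\sqrt\e\,\pt_t v_\e$ gives equicontinuity in time, while a Kolmogorov–Riesz compactness argument (cf. Appendix \ref{appB}) supplies the spatial compactness needed to upgrade weak to strong convergence. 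Strong convergence then lets me pass the continuous nonlinearity $\I_{ion}$ through the limit and identify $\mu_m\I_{ion}(v,w)$; the gating equation \eqref{bid_dyn} is linear in the unfolded variables, so $w$ is obtained by solving the ODE $\pt_t w=H(v,w)$ pointwise in $x$.

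\textbf{The hard part} is precisely this strong-convergence step for the interface nonlinearities: the test functions must be chosen compatible with the three-scale structure so that the membrane integrals decouple correctly into the mesoscopic reaction term, and one must verify that the Kolmogorov–Riesz criterion applies uniformly in both $\e$ and $\delta$ on the $\e$-oscillating surface $\Gamma_\e$. Once strong convergence of $v_\e$ on $\Gamma_\e$ is secured and the two cell problems are solved, assembling the limit equations, checking the normalization $\int_\Omega u_e\,dx=0$, and identifying the initial data $v_0,w_0$ via Remark \ref{conv_trace_ini_norm}–\ref{conv_cond_ini} are routine, completing the derivation of the macroscopic system \eqref{pb_macro}–\eqref{cond_ini}.
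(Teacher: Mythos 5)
Your overall architecture --- unfolding the weak formulation, exploiting the $\e$- and $\delta$-uniform a priori estimates \eqref{E_vw}--\eqref{E_dtvw}, a Kolmogorov--Riesz argument giving strong convergence of the unfolded trace $\T^b_\e(v_\e)$ so that the nonlinearity $\I_{ion}$ passes to the limit, and the identification of the cell problems and the two-level conductivities --- matches the paper's. The substantive divergence is in how you handle the two small parameters in the intracellular problem. You propose an iterated limit: first $\delta\to 0$ at fixed $\e$ with test functions $\varphi_i+\e\delta\,\psi(t,x,x/\e)\rho(x/(\e\delta))$ to produce $\theta_i^q$ and $\widetilde{\mathbf{M}}_i$, then a second homogenization $\e\to 0$ with $\varphi_i+\e\,\psi(t,x,x/\e)$ to produce $\chi_i^q$ and $\doublewidetilde{\mathbf{M}}_i$. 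The paper instead passes to the limit \emph{once}, as $\e\to0$ with $\delta\le\e$, using the composed operator $\T_\delta(\T^i_\e(\cdot))$ and Theorem \ref{T_d_e_weak} to obtain the three-scale decomposition $\nabla u_i+\nabla_y\widehat u_i+\nabla_z\widetilde u_i$ directly, with a \emph{single} test function $\Psi_i+\e\Psi_1\Phi_1^{\e}+\e\delta\Psi_2\Phi_2^{\e}\Theta^{\e,\delta}$ carrying all three scales; the ``two levels'' are then realized purely algebraically in the limit identity \eqref{Fvi_psi_phi_theta}, by setting $\Psi_i$, $\Psi_1$, $\Psi_2$ to zero in turn to eliminate first $\widetilde u_i$ (giving the $\delta$-cell problem) and then $\widehat u_i$ (giving the $\e$-cell problem).

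The gap in your route is the interchange of limits. The theorem concerns the joint limit of the actual sequence $u_i^{\e,\delta}$ as $\e,\delta\to0$ together; after your first step you hold the limit, for fixed $\e$, of an auxiliary mesoscopic problem on $\Omega_i^{\e}$ with coefficients $\widetilde{\mathbf{M}}_i$, whose solution is not $u_i^{\e,\delta}$ for any $\delta$. To conclude you would need either a diagonal argument showing the iterated limit coincides with the joint one (uniformity in $\e$ of the $\delta\to0$ convergence, or an explicit corrector estimate), or you should adopt the simultaneous three-scale unfolding, which is exactly what Theorem \ref{T_d_e_weak} provides. Two smaller inaccuracies: there are no surface integrals over $\Gamma_{\delta,T}$ to unfold, since \eqref{bid_microcont} is a homogeneous Neumann condition and only enters through the function space on $\Omega_i^{\e,\delta}$; and the integrability used for the nonlinear term is the $L^{r/(r-1)}$ bound on $I_{1,ion}(v_\e)$ from \eqref{E_vr} combined with Vitali's theorem (giving strong convergence in $L^q$ for $q<r/(r-1)$), not strong $L^r$ convergence of $v_\e$ itself.
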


The proof of Theorem \ref{thm_macro} is given in Section \ref{unf}.
The uniqueness of the solutions to the macroscopic model can be proved by standard methods. This implies that all the convergence results remain valid for the whole sequence. It is easy to verify that the macroscopic conductivity tensors of the intracellular $\doublewidetilde{\mathbf{M}}_{i}$ and extracellular $\widetilde{\mathbf{M}}_{e}$ spaces are symmetric and positive definite.

\begin{rem} The authors in \cite{bendunf19} treated the initial problem with the coefficients $\mathrm{m}^{pq}_{j}$ depending only on the variable $y$ for $j=i,e$. Comparing to \cite{bendunf19}, in our work the microscopic conductivity matrix $\mathrm{M}_{i}$ of the intracellular space depends on two variables $y$ and $z$. Using a three-scale unfolding method, we derive a new approach of the homogenized model \eqref{pb_macro} from the microscopic problem \eqref{pbscale}. Our homogenized problem is described in three steps. First, we unfold the weak formulation of the initial problem and prove the convergence results of the corresponding terms using the properties of the unfolding operators. Next, we pass to the limit in the unfolded formulation and we find the explicit forms of the associated solutions. Finally, the last step describes the two-level homogenization whose the homogenized (macroscopic) conductivity matrix $\doublewidetilde{\mathbf{M}}_i$ of the intracellular space  are integrated with respect to $z$ and then with respect to $y$.
\end{rem}

\section{The unfolding method in perforated domains}\label{methodunf} In this section, we give the definitions for the concepts of unfolding operator defined on the domain $\Omega_T$ and on the membrane $\Gamma^{y}_{T}.$ Further, we recall some properties and results related to these concepts used in our paper.

\subsection{Unfolding Operator}For the reader's convenience, we recall the notion of the unfolding operator. The following results can be found in \cite{doinaunf12}.
\subsubsection{\textbf{Definition of the unfolding operator}}\label{domain_unf}	
 In order to define an unfolding operator, we first introduce the following sets in $\R^d$ (see Figure \ref{fig_unf})
\\ 
 \begin{itemize}
 \item $\Xi_\e=\lbrace k \in \mathbb{Z}^d, \ \e(k_{\ell}+Y)\subset\Omega\rbrace,$ where $ k_\ell:=( k_1\ell^\text{mes}_1,\dots,  k_d \ell^\text{mes}_d ),$
 \\
 \item $\Xi_\delta=\lbrace k' \in \mathbb{Z}^d, \ \delta(k'_{\ell'}+Z)\subset\Omega\rbrace,$ where $ k'_{\ell'}=( k'_1\ell^\text{mic}_1,\dots,  k'_d \ell^\text{mic}_d ),$
 \\
 \item $\widehat{\Omega}^{\e}=$ interior $\lbrace\underset{k \in \Xi_\e }{\bigcup} \e (k_{\ell} +\overline{Y})\rbrace,$
 \\
 \item $\widehat{\Omega}_{e}^{\e}= $ interior $\lbrace\underset{k \in \Xi_\e }{\bigcup} \e (k_{\ell} +\overline{Y_e})\rbrace,$
 \\
 \item $\widehat{\Omega}_{i}^{\e,\delta}=$ interior $\lbrace\underset{k \in \Xi_\e }{\bigcup} \e (k_{\ell} +\overline{Y_i}^\delta)\rbrace, \qquad \overline{Y_i}^\delta=$ interior $\lbrace \underset{k' \in \Xi_\delta }{\bigcup} \delta(k'_{\ell'} + \overline{Z_{c}})\rbrace,$ 
 \\
  \item $\widehat{\Gamma}_\e=\lbrace y\in \Gamma^{y} : y\in \widehat{\Omega}^{\e}\rbrace,$
 \\
 \item $\Lambda^\e=\Omega\setminus\widehat{\Omega}^{\e},$
 \\
 \item $\widehat{\Omega}_{T}^{\e}=(0,T)\times \widehat{\Omega}^{\e},$
 \\
 \item $\widehat{\Omega}_{i,T}^{\e,\delta}=(0,T)\times \widehat{\Omega}_{i}^{\e,\delta}, \qquad \widehat{\Omega}_{e,T}^{\e}=(0,T)\times \widehat{\Omega}_{e}^{\e}, $
 \\
 \item $\Lambda_{T}^\e=(0,T)\times \Lambda^\e.$
 \end{itemize}
 \begin{figure}[h!]
  \centering
  \includegraphics[width=11cm]{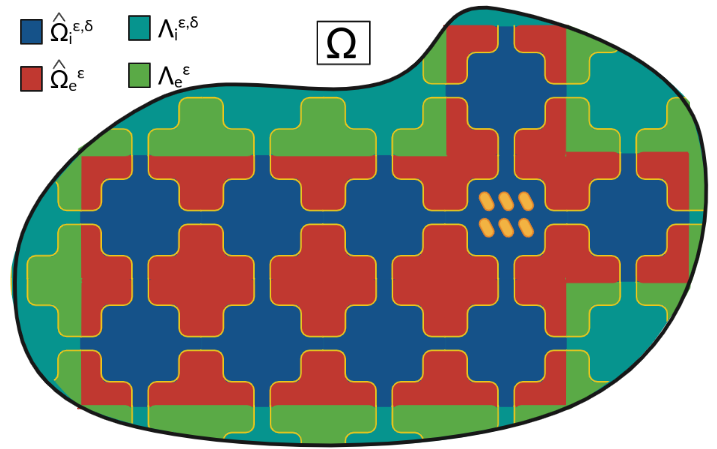}
  \caption{The sets $\widehat{\Omega}_{i}^{\e,\delta}$ (in blue), $\widehat{\Omega}_{e}^{\e}$ (in red), $\Lambda_i^{\e,\delta}$ (in white)  and $\Lambda_e^\e$ (in green).}
  \label{fig_unf}
 \end{figure} 
 For all $w\in \R^d,$ let $[w]_{Y}$ be the unique integer combination of the periods such that $w-[w]_{Y} \in Y.$ We may write $w=[w]_{Y}+\lbrace w\rbrace_Y$ for all $w\in \R^d,$ so that for all $\e>0,$ we get the unique decomposition:
 $$x=\e\left(\left[ \dfrac{x}{\e} \right]_Y + \left\lbrace \dfrac{x}{\e} \right\rbrace_Y  \right), \ \text{for all } x\in \R^d.  $$
Based on this decomposition, we define the unfolding operator in perforated domains.
\begin{defi} For any function $\phi$ Lebesgue-measurable on $(0,T)\times\Omega_{i}^{\e}$ (the intracellular medium at mesoscale), the unfolding operator $\T_{\e}^{i}$ is defined as follows:
\begin{equation}
\T_{\e}^{i}(\phi)(t,x,y)=
\begin{cases}
\phi\left(t, \e\left[ \dfrac{x}{\e} \right]_Y +\e y \right) &\text{ a.e. for } (t, x, y) \in \widehat{\Omega}^{\e}_{T}\times Y_{i}, \\ 0  &\text{ a.e. for } (t, x, y) \in \Lambda^{\e}_{T}\times Y_{i},
\end{cases}
\label{op_ie}
\end{equation}
where $\left[ \cdot\right] $ denotes the Gau$\beta$-bracket.
Similarly, we define the unfolding operator $\T_{\e}^{e}$ on the domain $(0,T)\times\Omega_{e}^{\e}.$
\end{defi}

We readily have that:
$$\forall x\in \Omega_{i}^{\e}\cap\widehat{\Omega}^{\e}, \ \T_{\e}^{i}(\phi)\left(t, x, \left\lbrace \dfrac{x}{\e} \right\rbrace_Y  \right)=\phi(t,x). $$

\subsubsection{\textbf{Properties of the unfolding operator}}
The following results summarizes some basic properties of the unfolding operator and we refer the reader to \cite{doinaunf06,doinaunf12} for more details.
\begin{prop}
For $p\in [1,+\infty),$ the operator $\T_{\e}^{i}$ is linear and continuous from $L^{p}\left((0,T) \times\Omega_{i}^{\e}\right)$ to $L^{p}(\Omega_T\times Y_{i}).$ For every $\phi \in L^{1}\left((0,T) \times\Omega_{i}^{\e}\right)$ and $u,v \in L^{p}\left((0,T) \times\Omega_{i}^{\e}\right),$ the following formula holds:
\begin{enumerate}
\item[$1.$]\label{Prop1_uo} $\T_{\e}^{i}(uv)=\T_{\e}^{i}(u)\T_{\e}^{i}(v),$
\\
\item[$2.$]\label{Prop2_uo} $\dfrac{1}{\abs{Y}}\displaystyle\iint_{\Omega\times Y_i} \T_{\e}^{i}(\phi)(t,x,y) \ dx dy=\int_{\widehat{\Omega}_{i}^{\e,\delta}} \phi(t, x) \ dx=\int_{\Omega_{i}^{\e}} \phi(t, x) \ dx-\int_{\Lambda_{i}^{\e}} \phi(t, x) \ dx,$ 
\\
\item[$3.$] \label{Prop3_uo} $\norm{\T_{\e}^{i}(u)}_{L^{p}(\Omega\times Y_i)}=\abs{Y}^{1/p}\norm{u \mathds{1}_{\widehat{\Omega}_{i}^{\e,\delta}}}_{L^{p}\left( \Omega_{i}^{\e}\right)}\leq \abs{Y}^{1/p}\norm{u}_{L^{p}\left( \Omega_{i}^{\e}\right)},$
\\
\item[$4.$] \label{Prop4_uo} $\nabla_y\T_{\e}^{i}(u)(t,x,y)=\e\T_{\e}^{i}(\nabla_x u)(t,x,y).$
\end{enumerate}
\label{prop_uo}
\end{prop}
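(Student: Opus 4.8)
The plan is to establish the four identities by reducing everything to the cellwise structure of $\T_{\e}^{i}$: items $1$ and $4$ are pointwise statements, while item $3$ is a direct consequence of the integration formula in item $2$, which is therefore the engine of the proposition. Throughout I would exploit that on the interior part $\widehat{\Omega}^{\e}$ the domain decomposes, up to a null set, into the disjoint periodicity cells $\e(k_{\ell}+Y)$ with $k\in\Xi_\e$, and that on each such cell the integer part $[x/\e]_Y$ equals the constant $k_\ell$. Hence, for $x$ fixed in a cell, $\T_{\e}^{i}(\phi)(t,x,\cdot)$ is simply the restriction to $Y_i$ of $\phi\bigl(t,\e k_\ell+\e\,\cdot\bigr)$.

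For item $1$ I would note that the definition evaluates its argument at the single point $\e[x/\e]_Y+\e y$, and since pointwise evaluation commutes with multiplication, $\T_{\e}^{i}(uv)(t,x,y)=(uv)\bigl(t,\e[x/\e]_Y+\e y\bigr)=\T_{\e}^{i}(u)(t,x,y)\,\T_{\e}^{i}(v)(t,x,y)$ on $\widehat{\Omega}^{\e}_T\times Y_i$, while both sides vanish on $\Lambda^{\e}_T\times Y_i$. The substantive step is item $2$. Since $\T_{\e}^{i}(\phi)$ is supported in $\widehat{\Omega}^{\e}_T\times Y_i$, I would restrict the $x$-integral to $\widehat{\Omega}^{\e}$, split it over the cells, and use that the integrand is independent of $x$ on each cell $\e(k_\ell+Y)$, whose measure is $\e^d\abs{Y}$:
\[
\frac{1}{\abs{Y}}\iint_{\Omega\times Y_i}\T_{\e}^{i}(\phi)(t,x,y)\,dx\,dy
=\frac{1}{\abs{Y}}\sum_{k\in\Xi_\e}\e^d\abs{Y}\int_{Y_i}\phi(t,\e k_\ell+\e y)\,dy .
\]
The change of variables $\xi=\e(k_\ell+y)$ turns each summand into $\int_{Y^k_{i,\e}}\phi(t,\xi)\,d\xi$, and summing over $k\in\Xi_\e$ recovers $\int_{\widehat{\Omega}^{\e,\delta}_i}\phi\,dx$. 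Writing $\Omega^{\e}_i=\widehat{\Omega}^{\e,\delta}_i\cup\Lambda^{\e}_i$ then produces the stated difference of integrals.

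Items $3$ and $4$ follow quickly from what precedes. For item $3$ I would apply item $2$ to $\phi=\abs{u}^p$, using the pointwise identity $\abs{\T_{\e}^{i}(u)}^p=\T_{\e}^{i}(\abs{u}^p)$ (the same evaluation-commutes-with-composition principle that underlies item $1$), giving $\norm{\T_{\e}^{i}(u)}_{L^p(\Omega\times Y_i)}^{p}=\abs{Y}\,\norm{u\,\mathds{1}_{\widehat{\Omega}^{\e,\delta}_i}}_{L^p(\Omega^{\e}_i)}^{p}$; the final inequality is then immediate from $\mathds{1}_{\widehat{\Omega}^{\e,\delta}_i}\le 1$. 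For item $4$, on the interior of each cell the map $x\mapsto[x/\e]_Y$ is locally constant, so differentiating $y\mapsto u(t,\e[x/\e]_Y+\e y)$ yields the factor $\e$ by the chain rule, whence $\nabla_y\T_{\e}^{i}(u)=\e\,\T_{\e}^{i}(\nabla_x u)$ for $u$ admitting a weak $x$-gradient. I expect the main obstacle to be not analytic depth but careful bookkeeping of the boundary layer $\Lambda^{\e}$: one must verify that the cells indexed by $\Xi_\e$ tile $\widehat{\Omega}^{\e}$ exactly up to a set of measure zero, that the operator genuinely vanishes off $\widehat{\Omega}^{\e}_T$, and that the leftover strip $\Lambda^{\e}_i$ accounts for precisely the correction term in item $2$. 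These are the points where the perforated geometry could introduce spurious errors if treated loosely.
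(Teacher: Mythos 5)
Your proof is correct; the paper itself offers no proof of Proposition \ref{prop_uo}, deferring instead to the references \cite{doinaunf06,doinaunf12}, and your argument --- cellwise decomposition of $\widehat{\Omega}^{\e}$, constancy of $[x/\e]_Y$ on each cell, the change of variables $\xi=\e(k_\ell+y)$ for the integration formula, and the reduction of items $1$, $3$, $4$ to pointwise evaluation, item $2$ applied to $\abs{u}^p$, and the chain rule respectively --- is exactly the standard derivation found there. Nothing further is needed.
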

In the sequel, we will define $W_{\#}^{1,p}$ the periodic Sobolev space as follows
\begin{defi} Let $\mathcal{O}$ be a reference cell and $p\in[1,+\infty)$. Then, we define
\begin{equation}
W_{\#}^{1,p}(\mathcal{O})=\lbrace u \in W^{1,p}(\mathcal{O}) \ \text{such that} \ u \ \text{periodic with }  \M_{\mathcal{O}}(u)=0\rbrace,
\label{W}
\end{equation}
where $\M_{\mathcal{O}}(u)=\dfrac{1}{\abs{\mathcal{O}}}\displaystyle\int_{\mathcal{O}} u \ dy.$
Its duality bracket is defined by: $$F(u)=(F,u)_{(W_{\#}^{1,p}(\mathcal{O}))',W_{\#}^{1,p}(\mathcal{O})}=(F,u)_{(W^{1,p}(\mathcal{O}))',W^{1,p}(\mathcal{O})},\ \forall u \in W_{\#}^{1,p}(\mathcal{O}).$$
Furthermore, by the Poincaré-Wirtinger's inequality, the Banach space $W_{\#}^{1,p}$ has the following norm:
$$\norm {u}_{W_{\#}^{1,p}(\mathcal{O})}=\norm {\nabla u}_{L^p(\mathcal{O})}, \forall u \in W_{\#}^{1,p}(\mathcal{O}).$$
{\bfseries \textsc{Notation}:} We denote $W_{\#}^{1,2}(\mathcal{O})$ by $H_{\#}^{1}(\mathcal{O})$  for $p=2.$
\end{defi}
We turn now to the convergence properties for the corresponding unfolding operator, see for e.g. Proposition 2.8 and Theorem 3.12 in \cite{doinaunf12}.
\begin{prop}Let $p\in [1,+\infty).$
\begin{enumerate}
\item[$1.$]For $\phi\in L^{p}(\Omega_T),$ 
$$\T_{\e}^{i}(\phi) \underset{\e\rightarrow 0}{\longrightarrow} \phi \text{ strongly in } L^{p}(\Omega_T\times Y_i).$$
\item[$2.$]Let $\{ \phi^\e \}$ be a sequence in $L^{p}(\Omega_T)$ such that :
$$ \phi^\e \underset{\e\rightarrow 0}{\longrightarrow} \phi \text{ strongly in } L^{p}(\Omega_T)$$
Then,
$$\T_{\e}^{i}(\phi^\e) \underset{\e\rightarrow 0}{\longrightarrow} \phi \text{ strongly in } L^{p}(\Omega_T\times Y_i). $$
\end{enumerate}
\label{T_e_strong}
\end{prop}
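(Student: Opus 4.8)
The plan is to prove both statements by the classical density-plus-uniform-boundedness scheme that underlies the unfolding method, reducing everything to the case of smooth functions and then exploiting the uniform bound of Proposition \ref{prop_uo}(3).

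First, for part 1, I would establish the conclusion for $\phi$ continuous on $\overline{\Omega_T}$ (e.g.\ $\phi\in C_c(\Omega_T)$). For such $\phi$ and for $(t,x,y)\in\widehat{\Omega}^\e_T\times Y_i$, the definition \eqref{op_ie} gives $\T_\e^i(\phi)(t,x,y)=\phi\bigl(t,\e[x/\e]_Y+\e y\bigr)$. Since $\abs{x-\e[x/\e]_Y}\leq\e\,\mathrm{diam}(Y)$ and $\abs{\e y}\leq\e\,\mathrm{diam}(Y)$, the sampling point tends to $x$ uniformly in $(x,y)$, so uniform continuity of $\phi$ yields $\T_\e^i(\phi)\to\phi$ uniformly on $\widehat{\Omega}^\e_T\times Y_i$. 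On the boundary layer $\Lambda^\e_T\times Y_i$ the operator vanishes, but $\abs{\Lambda^\e}\to0$ as $\e\to0$, so the contribution of this set to $\norm{\T_\e^i(\phi)-\phi}_{L^p}^p$ is bounded by $\abs{Y_i}\,\norm{\phi}_\infty^p\,\abs{\Lambda^\e_T}\to0$. Combining the two regions gives $\T_\e^i(\phi)\to\phi$ strongly in $L^p(\Omega_T\times Y_i)$ for continuous $\phi$.

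Second, I would pass to arbitrary $\phi\in L^p(\Omega_T)$ by a $3\eta$-argument. Given $\eta>0$, choose $\psi$ continuous with $\norm{\phi-\psi}_{L^p(\Omega_T)}<\eta$, and split
$$\norm{\T_\e^i(\phi)-\phi}_{L^p(\Omega_T\times Y_i)}\leq\norm{\T_\e^i(\phi-\psi)}_{L^p}+\norm{\T_\e^i(\psi)-\psi}_{L^p}+\norm{\psi-\phi}_{L^p(\Omega_T\times Y_i)}.$$
The first term is $\leq\abs{Y}^{1/p}\norm{\phi-\psi}_{L^p(\Omega_T)}<\abs{Y}^{1/p}\eta$ by Proposition \ref{prop_uo}(3) (applied in the space-time setting, time being a passive parameter); the third term equals $\abs{Y_i}^{1/p}\norm{\psi-\phi}_{L^p(\Omega_T)}<\abs{Y_i}^{1/p}\eta$ since $\psi-\phi$ is independent of $y$; and the middle term tends to $0$ by the first step. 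Letting $\e\to0$ and then $\eta\to0$ proves part 1. Part 2 then follows at once from linearity: writing
$$\norm{\T_\e^i(\phi^\e)-\phi}_{L^p(\Omega_T\times Y_i)}\leq\norm{\T_\e^i(\phi^\e-\phi)}_{L^p}+\norm{\T_\e^i(\phi)-\phi}_{L^p},$$
the first term is bounded by $\abs{Y}^{1/p}\norm{\phi^\e-\phi}_{L^p(\Omega_T)}\to0$ by hypothesis together with Proposition \ref{prop_uo}(3), while the second tends to $0$ by part 1.

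I expect the only genuinely delicate point to be the smooth-function step: the careful bookkeeping of the sampling point $\e[x/\e]_Y+\e y$ and, in particular, the control of the vanishing boundary layer $\Lambda^\e$ on which $\T_\e^i$ is set to zero. Once uniform convergence on $\widehat{\Omega}^\e_T\times Y_i$ and $\abs{\Lambda^\e}\to0$ are in hand, the remaining density and linearity arguments are entirely standard.
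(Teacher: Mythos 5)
Your proof is correct, and it is the standard density-plus-uniform-bound argument; the paper itself gives no proof of this proposition but simply cites Proposition 2.8 and Theorem 3.12 of \cite{doinaunf12}, where essentially this same argument (uniform convergence for continuous functions via the estimate $\abs{\e[x/\e]_Y+\e y-x}\leq C\e$, control of the vanishing boundary layer $\Lambda^\e$, then density and the $L^p$-isometry-type bound of Proposition \ref{prop_uo}(3)) is carried out. Nothing further is needed.
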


\begin{thm}
Let $p\in (1,+\infty).$ Suppose that $u^\e \in L^{p}\left( 0,T; W^{1,p}\left( \Omega_{i}^{\e}\right)\right)$ satisfies
$$ \norm{u^\e}_{L^{p}\left( 0,T; W^{1,p}\left( \Omega_{i}^{\e}\right)\right)}\leq C. $$
Then, there exist $u \in L^{p}\left( 0,T; W^{1,p}(\Omega)\right)$ and $\widehat{u}\in L^{p}\left( 0,T; L^{p}\left(\Omega, W_{\#}^{1,p}(Y_i)\right)\right),$ such that, up to a subsequence, the following hold when $\e\rightarrow 0:$
\begin{enumerate}
\item[$1.$] $\T_{\e}^{i}(u^\e)\rightharpoonup u \text{ weakly in } L^{p}\left( 0,T; L^{p}\left(\Omega, W^{1,p}(Y_i)\right)\right),$
\\
\item[$2.$] $\T_{\e}^{i}(\nabla u^\e) \rightharpoonup \nabla u+\nabla_y \widehat{u} \text{ weakly in } L^{p}(\Omega_T\times Y_i),$
\end{enumerate}
with the space $W_{\#}^{1,p}$ is defined by \eqref{W}.
\label{T_e_weak}
\end{thm}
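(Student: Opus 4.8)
Since the unfolding operator $\T_\e^i$ acts only on the space variables, the time variable $t\in(0,T)$ plays the role of a passive parameter throughout; all spaces have the form $L^p(0,T;\,\cdot\,)$, so I would carry out the argument as in the stationary Cioranescu--Damlamian--Griso scheme adapted to the perforated intracellular cell $Y_i$, and integrate in $t$ at the end. The plan is to extract the macroscopic limit $u$ and the corrector $\hat u$ in three steps: weak compactness, identification of the $y$-independent limit, and construction of the fluctuation. For the first step, the third property of Proposition \ref{prop_uo} gives $\norm{\T_\e^i(u^\e)}_{L^p(\Omega_T\times Y_i)}\le C\norm{u^\e}_{L^p(\Omega_{i,T}^{\e,\delta})}$ and the analogous bound for $\T_\e^i(\nabla u^\e)$, so both families are bounded in $L^p(\Omega_T\times Y_i)$. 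Since $1<p<\infty$ this space is reflexive, and up to a subsequence $\T_\e^i(u^\e)\rightharpoonup\chi$ and $\T_\e^i(\nabla u^\e)\rightharpoonup\Theta$ weakly in $L^p(\Omega_T\times Y_i)$.

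Next I would show that the zeroth-order limit does not depend on $y$. The fourth property of Proposition \ref{prop_uo} reads $\nabla_y\T_\e^i(u^\e)=\e\,\T_\e^i(\nabla u^\e)$, whose right-hand side is $\e$ times a bounded sequence and therefore converges to $0$ strongly in $L^p$. Passing to the weak limit forces $\nabla_y\chi=0$, so $\chi=\chi(t,x)=:u(t,x)$ is independent of $y$; combined with the bound on $\T_\e^i(u^\e)$ in $L^p(0,T;L^p(\Omega,W^{1,p}(Y_i)))$, this yields assertion (1). To see that $u$ is a genuine macroscopic function, i.e. $u\in L^p(0,T;W^{1,p}(\Omega))$ defined on all of $\Omega$ and not merely on the perforated set $\widehat{\Omega}_i^{\e,\delta}$, I would test $\T_\e^i(\nabla u^\e)$ against $y$-independent smooth fields and use the mean-value identity (second property of Proposition \ref{prop_uo}) together with Proposition \ref{T_e_strong}, so that the $Y_i$-average $\M_{Y_i}(\Theta)$ is identified as the macroscopic gradient $\nabla_x u$. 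Obtaining the global $W^{1,p}(\Omega)$ regularity is exactly where the perforated geometry enters: I would invoke the connectedness of $Y_i$ through a uniformly bounded extension operator $W^{1,p}(\Omega_i^\e)\to W^{1,p}(\Omega)$, whose extended sequence then converges weakly to $u$.

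For the corrector I would set $Z_\e:=\tfrac1\e\bigl(\T_\e^i(u^\e)-\M_{Y_i}(\T_\e^i(u^\e))\bigr)$. By the fourth property of Proposition \ref{prop_uo} one has $\nabla_y Z_\e=\T_\e^i(\nabla u^\e)$, which is bounded in $L^p(\Omega_T\times Y_i)$; since $Z_\e$ has zero $Y_i$-mean, the Poincaré--Wirtinger inequality on $W^{1,p}_\#(Y_i)$ makes $Z_\e$ itself bounded, so a subsequence converges weakly. Separating the part of the weak limit that is affine in $y$ (and carries the macroscopic slope $\nabla_x u$) from the purely periodic fluctuation produces $\hat u\in L^p(0,T;L^p(\Omega,W^{1,p}_\#(Y_i)))$ satisfying $\Theta=\nabla_x u+\nabla_y\hat u$, which is assertion (2).

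The hard part will be the last two steps rather than the compactness. Concretely, the difficulty is twofold: first, securing that the limit $u$ lives in $W^{1,p}$ on the entire domain $\Omega$ although the data are prescribed only on the perforated region $\Omega_i^{\e,\delta}$, which relies essentially on the connectedness of the intracellular medium; and second, cleanly disentangling the macroscopic gradient $\nabla_x u$ from the microscopic corrector gradient $\nabla_y\hat u$ inside the single weak limit $\Theta$. The delicate point here is that the naive weak limit of $Z_\e$ is not periodic, and one must verify that its non-periodic component is precisely the affine field with slope $\nabla_x u=\M_{Y_i}(\Theta)$, so that the zero-mean normalization isolates exactly the periodic part $\hat u\in W^{1,p}_\#(Y_i)$.
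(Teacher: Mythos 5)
Your plan is, in outline, the standard Cioranescu--Damlamian--Griso unfolding argument for perforated domains, which is exactly what the paper relies on: it gives no proof of its own and simply cites Proposition 2.8 and Theorem 3.12 of \cite{doinaunf12}. The compactness step, the $y$-independence of the first limit via $\nabla_y\T_{\e}^{i}(u^\e)=\e\,\T_{\e}^{i}(\nabla u^\e)$, the use of a uniformly bounded extension operator to place $u$ in $W^{1,p}(\Omega)$, and the mean-free rescaled sequence $Z_\e$ controlled by Poincar\'e--Wirtinger are all correct and are the right ingredients.

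There is, however, one identification in your sketch that is false in the perforated setting and would derail the corrector step as written: the $Y_i$-average of the limit flux is \emph{not} the macroscopic gradient, i.e.\ $\M_{Y_i}(\Theta)\neq\nabla u$ in general. Testing $\T_{\e}^{i}(\nabla u^\e)$ against a $y$-independent field $\psi(t,x)$ and integrating by parts over $\Omega_i^{\e}$ leaves a boundary term on $\Gamma_\e$ that does not vanish; in the limit one gets $\M_{Y_i}(\Theta)=\nabla u+\M_{Y_i}(\nabla_y\widehat{u})$ with $\M_{Y_i}(\nabla_y\widehat{u})=\abs{Y_i}^{-1}\int_{\Gamma^{y}}\widehat{u}\,n\,d\sigma_y$, which is generically nonzero (the contributions on $\pt Y\cap\pt Y_i$ cancel by periodicity, those on the interior boundary $\Gamma^{y}$ do not). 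This nonvanishing average is precisely why the homogenized matrices \eqref{tilde_m_i} are not plain volume averages of $\mathrm{M}_i$. Consequently, subtracting from $Z_\e$ the affine field of slope $\M_{Y_i}(\Theta)$ does not isolate a periodic remainder: the decomposition $\Theta=\nabla u+\nabla_y\widehat{u}$ is pinned down by the \emph{periodicity} of $\widehat{u}$, not by a zero $Y_i$-average of $\nabla_y\widehat{u}$. The repair is already implicit in your own second step: take $\nabla u$ to be the gradient of the weak $W^{1,p}(\Omega)$-limit of the extended functions, apply the fixed-domain version of the theorem to these extensions on the full cell $Y$, and restrict the resulting periodic corrector to $Y_i$; the slope of the affine field you subtract must be this $\nabla u$, and the delicate verification is the periodicity of what remains, not the average identity you propose.
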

\subsection{Composition of unfolding operators}
In the intracellular problem, since the electrical potential $u_i^{\e,\delta}$ depends on the mesoscopic variable $y$ and the microscopic one $z$ so we will define a composition of two unfolding operators.  
In this section, we compose unfolding operators with the following convention:\\
Any unfolding operator acts on the two last variables of a function. Herein, we will state the result for a composition of two unfolding operators (see \cite{meunier05}). Let $Y$ and $Z$ be two reference cells (see Figure \ref{three_scale}). For $\e,\delta >0,$ with $ \delta\leq \e,$ the unfolding operators $\T_{\e}^{i}$ and $\T_{\delta}$ are respectively associated to $Y_i$ and $Z_{c}.$ The unfolding operator $\T_{\delta}$ is defined on $\Omega^{\e}_{T}\times Y_{i}$ as follows:
\begin{equation}
\T_{\delta}(\psi)(t,x,y,z)=
\begin{cases}
\psi\left(t, x, \delta\left[ \dfrac{y}{\delta} \right]_Z +\delta z \right) &\text{ a.e. for } (t, x, y, z) \in \widehat{\Omega}^{\e}_{T}\times Y_{i}\times Z_{c}, \\ 0  &\text{ a.e. for } (t, x, y, z) \in \Lambda^{\e}_{T}\times Y_{i}\times Z_{c},
\end{cases}
\label{op_d}
\end{equation}
for any function $\psi$ Lebesgue-measurable on $(0,T)\times\Omega_{i}^{\e,\delta}\times Y_i$.

First, we define the composition of the unfolding operators associated to $Y_i$ and $Z_{c}$ as follows:
\begin{equation}
\T_{\delta}\left(\T_{\e}^{i}(\phi)\right) (t,x,y,z)=
\begin{cases}
\phi\left(t, \e\left[ \dfrac{x}{\e} \right]_Y+\e\delta\left[ \dfrac{y}{\delta} \right]_Z +\e\delta z \right) &\text{ a.e. for } (t, x, y, z) \in \widehat{\Omega}^{\e}_{T}\times Y_{i}\times Z_{c}, \\ 0  &\text{ a.e. for } (t, x, y, z) \in \Lambda^{\e}_{T}\times Y_{i}\times Z_{c},
\end{cases}
\label{op_d_e}
\end{equation}
for any function $\phi$ Lebesgue-measurable on $(0,T)\times\Omega_{i}^{\e,\delta}.$

 We see immediately that for all \textcolor{blue}{$x\in \widehat{\Omega}_{i}^{\e,\delta},$} we have:
$$\T_{\delta}\left(\T_{\e}^{i}(\phi)\right) \left( t,x,\left\lbrace  \dfrac{x}{\e}\right\rbrace_Y,\left\lbrace \dfrac{ \left\lbrace \dfrac{x}{\e}\right\rbrace}{\delta}\right\rbrace_Z \right)=\phi(t,x). $$
Next, we also have some properties for this composition of unfolding operators (see \cite{meunier05} for more details).
\begin{prop}For $p\in [1,+\infty),$ the operator $\T_{\delta}\left(\T_{\e}^{i}(\cdot)\right)$ is linear and continuous from $L^{p}\left((0,T)\times\Omega_{i}^{\e,\delta}\right)$ to $L^{p}(\Omega_T\times Y_{i}\times Z_{c}).$ For every $\phi \in L^{1}\left((0,T)\times\Omega_{i}^{\e,\delta}\right)$ and $u,v \in L^{p}\left((0,T)\times\Omega_{i}^{\e,\delta}\right),$ the following formula holds: \\
\begin{enumerate}
\item\label{Prop1_cuo} $\T_{\delta}\left(\T_{\e}^{i}(uv)\right)=\T_{\delta}\left(\T_{\e}^{i}(u)\right)\T_{\delta}\left(\T_{\e}^{i}(v)\right),$
\\
\item\label{Prop2_cuo} $\dfrac{1}{\abs{Y}}\dfrac{1}{\abs{Z}}\displaystyle\iiint_{\Omega\times Y_i\times Z_{c}} \T_{\delta}\left(\T_{\e}^{i}(\phi)\right)(t,x,y,z) \ dx dydz=\int_{\widehat{\Omega}_{i}^{\e,\delta}} \phi(t, x) \ dx,$
\\
\item\label{Prop3_cuo} $\norm{\T_{\delta}\left(\T_{\e}^{i}(u)\right)}_{L^{p}(\Omega\times Y_ic\times Z_{c})}=\abs{Y}^{1/p}\abs{Z}^{1/p}\norm{u \mathds{1}_{\widehat{\Omega}_{i}^{\e,\delta}}}_{L^{p}\left( \Omega_{i}^{\e,\delta}\right)}\leq \abs{Y}^{1/p}\abs{Z}^{1/p}\norm{u}_{L^{p}\left( \Omega_{i}^{\e,\delta}\right)}.$
\\
\item\label{Prop4_cuo} $\nabla_z\T_{\delta}\left(\T_{\e}^{i}(u)\right)=\delta\T_{\delta}\left(\nabla_y\T_{\e}^{i}(u)\right)=\e\delta\T_{\delta}\left(\T_{\e}^{i}(\nabla_x u)\right).$
\end{enumerate}
\label{prop_cuo}
\end{prop}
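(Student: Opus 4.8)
The plan is to obtain all four identities by reducing to the one-scale theory: Proposition \ref{prop_uo} for $\T_{\e}^{i}$ together with the entirely analogous properties of the single operator $\T_{\delta}$ of \eqref{op_d}, which hold by the same construction (cf.\ \cite{meunier05}). The composition $\T_{\delta}\circ\T_{\e}^{i}$ then inherits each property by applying the two scales in turn. For the multiplicativity in item (1), I would first invoke Proposition \ref{prop_uo}(1) to write $\T_{\e}^{i}(uv)=\T_{\e}^{i}(u)\,\T_{\e}^{i}(v)$, and then apply the multiplicativity of $\T_{\delta}$ to this product; composing gives the claim. This step is purely formal.

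For the averaging identity in item (2) I would unfold in two stages. The averaging property of the single operator $\T_{\delta}$, applied in the variables $(y,z)$, collapses the $Z_{c}$-integration and rewrites the triple integral as $\frac{1}{\abs{Y}}\iint_{\Omega\times\overline{Y_{i}}^{\delta}}\T_{\e}^{i}(\phi)\,dy\,dx$, where $\overline{Y_{i}}^{\delta}$ is the completely $\delta$-tiled part of $Y_{i}$ from Section \ref{domain_unf}; carrying out the remaining $\T_{\e}^{i}$-averaging then leaves $\int_{\widehat{\Omega}_{i}^{\e,\delta}}\phi\,dx$. The one point I would verify carefully — and the crux of the whole proposition — is that this nested averaging tiles exactly the set $\widehat{\Omega}_{i}^{\e,\delta}$: by construction the latter is the union of the complete $\e$-cells whose intracellular part $\overline{Y_{i}}^{\delta}$ is itself tiled by complete $\delta$-cells, so the two ``completeness'' conditions compose with no double counting and no boundary remainder. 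This is a geometric consistency check on the double-periodic structure rather than a computation.

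The norm identity in item (3) then follows by applying item (2) to $\abs{u}^{p}$ and using item (1) to commute the composed operator with $\abs{\cdot}^{p}$, which gives $\norm{\T_{\delta}(\T_{\e}^{i}(u))}_{L^{p}}^{p}=\abs{Y}\abs{Z}\,\norm{u\,\mathds{1}_{\widehat{\Omega}_{i}^{\e,\delta}}}_{L^{p}(\Omega_{i}^{\e,\delta})}^{p}$; the asserted inequality is immediate from $\widehat{\Omega}_{i}^{\e,\delta}\subset\Omega_{i}^{\e,\delta}$, and linearity and continuity are read off from this norm identity. For the gradient scaling in item (4), I would combine the one-scale rules: the $\T_{\delta}$-analogue of Proposition \ref{prop_uo}(4), namely $\nabla_{z}\T_{\delta}(\psi)=\delta\,\T_{\delta}(\nabla_{y}\psi)$ with $\psi=\T_{\e}^{i}(u)$, yields the middle expression $\delta\,\T_{\delta}(\nabla_{y}\T_{\e}^{i}(u))$, and then $\nabla_{y}\T_{\e}^{i}(u)=\e\,\T_{\e}^{i}(\nabla_{x}u)$ from Proposition \ref{prop_uo}(4) produces the factor $\e\delta$; the same conclusion can be read off directly by differentiating the explicit formula \eqref{op_d_e} in $z$, where the chain rule supplies the $\e\delta$. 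I expect item (2) to be the only genuinely substantive step — the compatibility of the two periodic tilings — with the remaining three being formal consequences of the single-scale properties.
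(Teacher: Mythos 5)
Your argument is correct and is essentially the standard one: the paper does not actually prove Proposition \ref{prop_cuo} but simply refers to \cite{meunier05}, where these properties are obtained exactly as you propose, by composing the corresponding single-scale properties of $\T_{\e}^{i}$ and $\T_{\delta}$ (multiplicativity, exact integration, the norm identity via $\T(\abs{u}^p)=\abs{\T(u)}^p$, and the chain-rule scaling of gradients). You also correctly isolate the only nontrivial point: in item (2) the inner $\T_{\delta}$-average produces an integral over $\overline{Y_i}^{\delta}\subset Y_i$ rather than over all of $Y_i$, so one needs the exact-integration formula of Proposition \ref{prop_uo} extended to measurable subsets of the reference cell (immediate from the definition), after which the doubly tiled set is $\widehat{\Omega}_{i}^{\e,\delta}$ by construction.
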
 

 Using the convergences properties of each unfolding operator, we can prove the following results:
\begin{prop}
For $p\in [1,+\infty).$ Let $\{ \phi^{\e,\delta} \}$ be a sequence in $L^{p}(\Omega_T)$ such that :
$$ \phi^{\e,\delta} \underset{\e \rightarrow 0}{\longrightarrow} \phi \text{ strongly in } L^{p}(\Omega_T)$$
Then,
$$\T_{\delta}\left(\T_{\e}^{i}(\phi^{\e,\delta})\right) \underset{\e\rightarrow 0}{\longrightarrow} \phi \text{ strongly in } L^{p}(\Omega_T\times Y_i \times Z).$$
\label{T_d_e_strong}
\end{prop}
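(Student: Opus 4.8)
The plan is to deduce the joint convergence by iterating the single-scale strong convergence already recorded in Proposition~\ref{T_e_strong}, using the continuity of the composite operator from Proposition~\ref{prop_cuo} to keep all estimates uniform in $\e$ and $\delta$. Throughout I exploit that $\T_\delta\bigl(\T_\e^i(\cdot)\bigr)$ is linear and, by Proposition~\ref{prop_cuo}(3), bounded from $L^p\bigl((0,T)\times\Omega_i^{\e,\delta}\bigr)$ to $L^p(\Omega_T\times Y_i\times Z_{c})$ with operator norm at most $\abs{Y}^{1/p}\abs{Z}^{1/p}$, independently of $\e,\delta$.

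First I would treat the inner operator. Since $\phi^{\e,\delta}\to\phi$ strongly in $L^p(\Omega_T)$, Proposition~\ref{T_e_strong}(2) applies verbatim and yields
$$\T_\e^i(\phi^{\e,\delta}) \underset{\e\to0}{\longrightarrow} \phi \quad\text{strongly in } L^p(\Omega_T\times Y_i),$$
the limit being independent of the mesoscopic variable $y$. Next I would apply the outer operator $\T_\delta$ to the strongly convergent sequence $g^{\e,\delta}:=\T_\e^i(\phi^{\e,\delta})$. Here $\T_\delta$ unfolds the variable $y\in Y_i$ at scale $\delta$ with $(t,x)$ as passive parameters, so the $\delta$-analogue of Proposition~\ref{T_e_strong}(2) gives
$$\T_\delta(g^{\e,\delta}) \underset{\e,\delta\to0}{\longrightarrow} \phi \quad\text{strongly in } L^p(\Omega_T\times Y_i\times Z_{c}),$$
because the limit $\phi$ does not depend on $y$.

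Equivalently, and to make this self-contained, I would split
$$\norm{\T_\delta(\T_\e^i(\phi^{\e,\delta}))-\phi} \le \norm{\T_\delta(\T_\e^i(\phi^{\e,\delta}-\phi))} + \norm{\T_\delta(\T_\e^i(\phi))-\phi},$$
bound the first summand by $\abs{Y}^{1/p}\abs{Z}^{1/p}\norm{\phi^{\e,\delta}-\phi}_{L^p(\Omega_T)}\to0$ via Proposition~\ref{prop_cuo}(3), and prove $\T_\delta(\T_\e^i(\phi))\to\phi$ for the fixed $\phi$ by density: approximate $\phi$ in $L^p(\Omega_T)$ by $\psi\in C(\overline{\Omega_T})$, use the explicit formula \eqref{op_d_e}, and note that on $\widehat\Omega^\e_T\times Y_i\times Z_{c}$ the spatial argument $\e[x/\e]_Y+\e\delta[y/\delta]_Z+\e\delta z$ differs from $x$ by $O(\e)$ uniformly in $(y,z)$, so uniform continuity forces convergence; the boundary layer $\Lambda^\e_T$, where the operator vanishes, contributes at most $\norm{\psi}_{L^\infty}\abs{\Lambda^\e}^{1/p}\to0$.

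The step I expect to be the main obstacle is the justification of the outer limit: one must verify that the $\delta$-displacement $\e\delta[y/\delta]_Z+\e\delta z$ is controlled by $O(\e)$ uniformly for $y\in Y_i$, $z\in Z_{c}$ — which relies on $\delta\le\e$ and the boundedness of the reference cell $Z$ — and that the measure of $\Lambda^\e$ vanishes at a rate independent of $\delta$, so that the two limits $\e\to0$ and $\delta\to0$ may be taken jointly. Once these uniformities are in place, the remaining arguments are the standard density/continuity machinery already packaged in Propositions~\ref{prop_cuo} and \ref{T_e_strong}.
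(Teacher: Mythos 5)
Your argument is correct and follows exactly the route the paper intends: the paper gives no explicit proof of Proposition~\ref{T_d_e_strong}, only the remark that it follows from the convergence properties of each unfolding operator, and your iteration of Proposition~\ref{T_e_strong} combined with the uniform operator bound from Proposition~\ref{prop_cuo} and the density/uniform-continuity argument for the composite operator is the standard way to flesh that out. The two uniformities you flag (the $O(\e)$ control of the spatial displacement using the boundedness of $Y$ and $Z$ together with $\delta\le\e$, and the $\delta$-independent vanishing of $\abs{\Lambda^{\e}}$) are indeed the only points needing care, and you handle them correctly.
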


Finally, we end by stating the main convergence result which proved as Theorem 4.1 and Theorem 6.1 in \cite{meunier05} (see also Theorem 5.17 in \cite{doinaunf12}):
\begin{thm} Let $\{ u^{\e,\delta}_{i} \}$ be sequence in $L^{p}\left( 0,T; W^{1,p}\left( \Omega_{i}^{\e,\delta}\right)\right)$ for $p\in (1,+\infty).$ satisfies
$$ \norm{u^{\e,\delta}_{i}}_{L^{p}\left( 0,T; W^{1,p}\left( \Omega_{i}^{\e,\delta}\right)\right)}\leq C. $$
Then, there exist $u_i \in L^{p}\left( 0,T; W^{1,p}(\Omega)\right),\widehat{u}_i\in L^{p}\left( 0,T; L^{p}\left(\Omega, W_{\#}^{1,p}(Y_i)\right)\right)$ and \\ $\widetilde{u}_i\in L^{p}\left( 0,T; L^{p}\left(\Omega \times Y_i, W_{\#}^{1,p}(Z_{c})\right)\right),$ such that, up to a subsequence, the following convergences hold as $\e$ goes to zero:
\begin{enumerate}
\item[$1.$] $\T_{\delta}\left(\T_{\e}^{i}(u^{\e,\delta}_{i})\right)\rightharpoonup u_i \text{ weakly in } L^{p}\left( 0,T; L^{p}\left(\Omega \times Y_i\times Z_{c}\right)\right),$
\item[$2.$] $\T_{\delta}\left(\T_{\e}^{i}(\nabla u^{\e,\delta}_{i})\right) \rightharpoonup \nabla u_i+\nabla_y \widehat{u}_i+\nabla_z \widetilde{u}_i \text{ weakly in } L^{p}(\Omega_T\times Y_i\times Z_{c}),$
\end{enumerate}
with the space $W_{\#}^{1,p}$ is given by the expression \eqref{W}.
\label{T_d_e_weak}
\end{thm}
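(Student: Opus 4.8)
The plan is to reduce the three-scale result of Theorem~\ref{T_d_e_weak} to the already-established two-scale machinery (Theorem~\ref{T_e_weak} and Proposition~\ref{prop_cuo}) by applying the single-scale unfolding convergence twice, in cascade. First I would apply Theorem~\ref{T_e_weak} to the sequence $\{u^{\e,\delta}_i\}$, which by hypothesis is bounded in $L^p(0,T;W^{1,p}(\Omega_i^{\e,\delta}))$ and hence bounded in $L^p(0,T;W^{1,p}(\Omega_i^{\e}))$. This yields a limit $u_i\in L^p(0,T;W^{1,p}(\Omega))$ and a corrector $\widehat{u}_i\in L^p(0,T;L^p(\Omega,W^{1,p}_{\#}(Y_i)))$ with $\T^i_\e(u^{\e,\delta}_i)\rightharpoonup u_i$ and $\T^i_\e(\nabla u^{\e,\delta}_i)\rightharpoonup \nabla u_i+\nabla_y\widehat{u}_i$ weakly in the relevant $L^p$ spaces. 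The crucial observation is that $u_i$ is independent of the mesoscopic variable $y$ (it lives on $\Omega$) precisely because of the weak-convergence characterization, and that the pair $(u_i,\widehat{u}_i)$ now plays the role of a new bounded sequence in the $y$-cell on which the second unfolding operator $\T_\delta$ will act.

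Next I would apply the unfolding operator $\T_\delta$ to the already-unfolded quantities. Using the composition identity and the gradient-scaling rule from Proposition~\ref{prop_cuo}, item~\ref{Prop4_cuo}, namely $\nabla_z\T_\delta(\T^i_\e(u))=\e\delta\,\T_\delta(\T^i_\e(\nabla_x u))$, together with the strong convergence of $\T_\delta$ on fixed functions from Proposition~\ref{T_d_e_strong}, I would transfer the weak limits through the second operator. Since $\T_\delta(\T^i_\e(u^{\e,\delta}_i))$ is bounded in $L^p(\Omega_T\times Y_i\times Z_c)$ by Proposition~\ref{prop_cuo}, item~\ref{Prop3_cuo}, we may extract a weakly convergent subsequence, and Proposition~\ref{T_d_e_strong} identifies its limit with $u_i$ (constant in both $y$ and $z$). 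Applying $\T_\delta$ to the gradient term and invoking the single-scale weak-convergence structure in the $z$-variable produces the third corrector $\widetilde{u}_i\in L^p(0,T;L^p(\Omega\times Y_i,W^{1,p}_{\#}(Z_c)))$, giving the full decomposition $\T_\delta(\T^i_\e(\nabla u^{\e,\delta}_i))\rightharpoonup \nabla u_i+\nabla_y\widehat{u}_i+\nabla_z\widetilde{u}_i$.

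The technical heart of the argument, and the step I expect to be the main obstacle, is controlling the interaction between the two scales $\e$ and $\delta$ under the standing assumption $\delta\leq\e$. The two single-scale theorems are stated for a \emph{fixed} inner domain, whereas here the domain $\Omega_i^{\e,\delta}$ is perforated at both levels simultaneously and the two limits $\e\to 0$ and $\delta\to 0$ must be coordinated. One must verify that the corrector $\widehat{u}_i$ obtained at the mesoscale is stable under the action of $\T_\delta$, i.e.\ that no additional oscillation in $z$ contaminates the $y$-corrector, which is where the scale separation $\delta\ll\e$ is essential. I would handle this by first testing the unfolded gradient against smooth functions of the form $\varphi(t,x)\psi(y)\rho(z)$ that are periodic in $y$ and $z$, then using the density of such products together with the gradient-scaling relations of Proposition~\ref{prop_cuo} to isolate the mean-zero components $\nabla_y\widehat{u}_i$ and $\nabla_z\widetilde{u}_i$ separately. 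The membership $\widetilde{u}_i\in L^p(0,T;L^p(\Omega\times Y_i,W^{1,p}_{\#}(Z_c)))$ follows from the boundedness of the scaled $z$-gradient $\nabla_z\T_\delta(\T^i_\e(u^{\e,\delta}_i))=\e\delta\,\T_\delta(\T^i_\e(\nabla u^{\e,\delta}_i))$ in the appropriate norm, after which a diagonal subsequence extraction delivers the stated convergences for the whole three-scale sequence.
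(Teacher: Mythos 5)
The paper itself does not prove Theorem \ref{T_d_e_weak}: it is imported verbatim from Theorems 4.1 and 6.1 of Meunier--Van Schaftingen (see also Theorem 5.17 of Cioranescu et al.), so there is no in-paper argument to match yours against. Your overall plan --- cascading the single-scale compactness result through the two operators --- is indeed the strategy of those references, but as written the sketch has gaps that are not merely technical. First, you cannot invoke Theorem \ref{T_e_weak} by declaring $\{u_i^{\e,\delta}\}$ ``bounded in $L^p(0,T;W^{1,p}(\Omega_i^{\e}))$'': the function is defined only on the doubly perforated set $\Omega_i^{\e,\delta}\subsetneq\Omega_i^{\e}$, so it is not an element of $W^{1,p}(\Omega_i^{\e})$ at all. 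One needs either a uniformly bounded extension operator from $\Omega_i^{\e,\delta}$ to $\Omega_i^{\e}$ (using connectedness of $Z_c$) or a compactness statement proved directly for the composed operator; this is exactly the nontrivial content of the reiterated unfolding theorem and cannot be bypassed by inclusion of domains. Second, Proposition \ref{T_d_e_strong} cannot identify the weak limit of $\T_{\delta}(\T_{\e}^{i}(u_i^{\e,\delta}))$ with $u_i$: that proposition applies only to sequences converging \emph{strongly} in $L^p(\Omega_T)$, i.e.\ functions of $(t,x)$ on the fixed domain, which $u_i^{\e,\delta}$ is not.

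The most serious gap concerns $\widetilde{u}_i$. The quantity you propose to use, $\nabla_z\T_{\delta}(\T_{\e}^{i}(u_i^{\e,\delta}))=\e\delta\,\T_{\delta}(\T_{\e}^{i}(\nabla u_i^{\e,\delta}))$, converges strongly to $0$ in $L^p$ (it is $O(\e\delta)$ times a bounded sequence), so its boundedness produces no corrector. The corrector is extracted from the rescaled fluctuation
\begin{equation*}
\frac{1}{\e\delta}\Bigl(\T_{\delta}\bigl(\T_{\e}^{i}(u_i^{\e,\delta})\bigr)-\M_{Z_{c}}\bigl(\T_{\delta}(\T_{\e}^{i}(u_i^{\e,\delta}))\bigr)\Bigr),
\end{equation*}
whose $z$-gradient is the bounded sequence $\T_{\delta}(\T_{\e}^{i}(\nabla u_i^{\e,\delta}))$ and which is itself bounded by the Poincar\'e--Wirtinger inequality on $Z_{c}$; one then subtracts the affine part $z\cdot\M_{Z_{c}}(\T_{\delta}(\T_{\e}^{i}(\nabla u_i^{\e,\delta})))$ to land in the mean-zero periodic space $W^{1,p}_{\#}(Z_{c})$ of \eqref{W}, and an analogous fluctuation argument at scale $\e$ produces $\widehat{u}_i$. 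Without these steps the decomposition $\nabla u_i+\nabla_y\widehat{u}_i+\nabla_z\widetilde{u}_i$ is asserted rather than derived. Either carry out this two-level fluctuation scheme explicitly or, as the paper does, cite the references where it is done.
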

\subsection{The Boundary Unfolding Operator}

Note that the meso-microscopic bidomain model is a dynamical boundary system at the interface of the intracellular and extracellular regions. We need to define here the unfolding operator on the boundary $\Gamma_\e,$ which developed in \cite{doinaunf06,doinaunf12,isabell}. To do that, we suppose that $\Gamma^{y}$ has a Lipschitz boundary. 
\begin{defi}
For any function $\varphi$ Lebesgue-measurable on $(0,T)\times \Gamma_\e,$ the boundary unfolding operator $\T_{\e}^{b}$ is defined as follows:
\begin{equation}
\T_{\e}^{b}(\varphi)(t,x,y)=
\begin{cases}
\varphi\left(t, \e\left[ \dfrac{x}{\e} \right]_Y +\e y \right) &\text{ a.e. for } (t, x, y) \in \widehat{\Omega}^{\e}_{T}\times \Gamma^{y}, \\ 0  &\text{ a.e. for } (t, x, y) \in \Lambda^{\e}_{T}\times \Gamma^{y}.
\end{cases}
\label{op_b_e}
\end{equation}
\end{defi}
We also list some properties of the boundary unfolding operator as given in \cite{doinaunf12}.
\begin{prop}
The boundary unfolding operator $\T_{\e}^{b}$ has the following properties:
\begin{enumerate}
\item \label{Prop1_buo} $\T_{\e}^{b}$ is linear operator from $L^p(\Gamma_{\e,T})$ to $L^p(\Omega_T\times \Gamma^{y}),$
\\
\item \label{Prop2_buo} $\T_{\e}^{b}(\varphi\psi)=\T_{\e}^{b}(\varphi)\T_{\e}^{b}(\psi), \ \forall \varphi, \psi \in L^p(\Gamma_{\e,T}), \ p \in (1,+\infty), $
\\
\item \label{Prop3_buo} For every $\varphi \in L^1(\Gamma_{\e,T}),$ we have the following integration formula: $$\dfrac{1}{\e\abs{Y}}\displaystyle\iint_{\Omega\times \Gamma^{y}} \T_{\e}^{b}(\varphi)(t,x,y) \ dx d\sigma_y=\int_{\widehat{\Gamma}_\e} \varphi(t, x) \ d\sigma_x,$$ 
\item \label{Prop4_buo} For every $\varphi \in L^p(\Gamma_{\e,T})$ with $ p \in (1,+\infty),$ one has:
$$\norm{\T_{\e}^{b}(\varphi)}_{L^{p}(\Omega\times \Gamma^{y})}=\e^{1/p}\abs{Y}^{1/p}\norm{\varphi}_{L^{p}(\widehat{\Gamma}_\e)}\leq \e^{1/p}\abs{Y}^{1/p}\norm{\varphi}_{L^{p}(\Gamma_\e)},$$

\item\label{Prop5_buo} For every $\varphi \in D(\Omega_T\times \Gamma^{y})$ and $\psi \in W^{1,1}(0,T, L^1(\Gamma_{\e})),$ the following integration by parts holds:
$$ \displaystyle\int_{0}^{T}\iint_{\Omega\times \Gamma^{y}} \T_{\e}^{b}(\pt_t \psi)\T_{\e}^{b}(\varphi) \ dx d\sigma_ydt=-\displaystyle\int_{0}^{T}\iint_{\Omega\times \Gamma^{y}} \T_{\e}^{b}( \psi)\T_{\e}^{b}(\pt_t\varphi) \ dx d\sigma_ydt. $$
\end{enumerate}
\label{prop_buo}
\end{prop}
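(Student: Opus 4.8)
The plan is to derive all five properties directly from the pointwise definition \eqref{op_b_e}, which represents $\T_{\e}^{b}(\varphi)(t,x,y)$ as the evaluation of $\varphi$ at the point $\e[x/\e]_Y+\e y$ on $\widehat{\Omega}^{\e}_{T}\times\Gamma^{y}$ (and as $0$ on $\Lambda^{\e}_{T}\times\Gamma^{y}$). Since $\T_{\e}^{b}$ is nothing but composition of $\varphi$ with a fixed measurable map in the spatial variables, linearity in Property \ref{Prop1_buo} is immediate, and the multiplicative identity of Property \ref{Prop2_buo} follows because evaluation at a point commutes with products: on $\widehat{\Omega}^{\e}_{T}\times\Gamma^{y}$ both sides equal $(\varphi\psi)(t,\e[x/\e]_Y+\e y)$, while on $\Lambda^{\e}_{T}\times\Gamma^{y}$ both sides vanish. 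The continuity asserted in Property \ref{Prop1_buo} will then follow from the norm identity of Property \ref{Prop4_buo}.

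The computational heart is the integration formula in Property \ref{Prop3_buo}. First I would use that $\T_{\e}^{b}(\varphi)\equiv 0$ on $\Lambda^{\e}_{T}$ to reduce the integral to $\widehat{\Omega}^{\e}\times\Gamma^{y}$, and then split $\widehat{\Omega}^{\e}$, up to a set of measure zero, into the complete cells $\e(k_{\ell}+Y)$, $k\in\Xi_\e$. On each such cell the bracket $[x/\e]_Y$ equals the constant $k_{\ell}$, so the integrand $\varphi(t,\e k_{\ell}+\e y)$ is independent of $x$; integrating in $x$ over the cell produces the volume factor $\abs{\e(k_{\ell}+Y)}=\e^{d}\abs{Y}$. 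Next I would perform the change of variables $y\mapsto\e(k_{\ell}+y)$, which maps $\Gamma^{y}$ onto the physical boundary patch $\Gamma^k_{\e}=\e(k_{\ell}+\Gamma^{y})$ and rescales the surface measure by $d\sigma_x=\e^{d-1}\,d\sigma_y$. The two scaling factors combine as $\e^{d}\abs{Y}\cdot\e^{-(d-1)}=\e\abs{Y}$, and summing over $k\in\Xi_\e$ recovers $\int_{\widehat{\Gamma}_\e}\varphi\,d\sigma_x$; dividing by $\e\abs{Y}$ yields Property \ref{Prop3_buo}. This careful bookkeeping of the $d$-dimensional volume Jacobian against the $(d-1)$-dimensional surface Jacobian, whose mismatch produces the single surviving power of $\e$, is the step I expect to require the most care.

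Property \ref{Prop4_buo} then follows by applying Property \ref{Prop3_buo} to $\abs{\varphi}^{p}$ in place of $\varphi$: since unfolding commutes with any pointwise nonlinearity, $\T_{\e}^{b}(\abs{\varphi}^{p})=\abs{\T_{\e}^{b}(\varphi)}^{p}$ (the same reasoning underlying Property \ref{Prop2_buo}), so $\norm{\T_{\e}^{b}(\varphi)}_{L^{p}(\Omega\times\Gamma^{y})}^{p}=\e\abs{Y}\,\norm{\varphi}_{L^{p}(\widehat{\Gamma}_\e)}^{p}$; taking $p$-th roots gives the equality, and the final inequality is simply $\widehat{\Gamma}_\e\subset\Gamma_\e$.

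Finally, for the integration-by-parts formula in Property \ref{Prop5_buo}, the key observation is that $\T_{\e}^{b}$ acts only on the spatial variables, so it commutes with differentiation in time: for fixed $(x,y)$ one has $\pt_t\bigl(\T_{\e}^{b}(\psi)\bigr)(t,x,y)=(\pt_t\psi)(t,\e[x/\e]_Y+\e y)=\T_{\e}^{b}(\pt_t\psi)(t,x,y)$, and likewise for $\varphi$. Rewriting the left-hand side as $\int_{0}^{T}\iint_{\Omega\times\Gamma^{y}}\pt_t(\T_{\e}^{b}\psi)\,\T_{\e}^{b}\varphi\,dx\,d\sigma_y\,dt$ and integrating by parts in $t$, the boundary contributions at $t=0,T$ vanish because $\varphi\in D(\Omega_T\times\Gamma^{y})$ has compact support in time; replacing $\pt_t(\T_{\e}^{b}\varphi)$ by $\T_{\e}^{b}(\pt_t\varphi)$ gives the claimed identity. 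The only delicate point is checking that the temporal regularity $\psi\in W^{1,1}(0,T;L^{1}(\Gamma_\e))$ suffices to justify differentiating under the unfolding and the integration by parts, which is routine once the commutation $\T_{\e}^{b}\pt_t=\pt_t\T_{\e}^{b}$ is established.
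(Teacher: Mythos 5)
Your proposal is correct, and it supplies more detail than the paper itself does: the paper offers no proof of items (1)--(4), simply citing Cioranescu et al.\ \cite{doinaunf06,doinaunf12} for these standard properties, and for item (5) it only remarks that the identity is ``a direct consequence of the integration by parts formula and the integration formula in property (3).'' Your computation for (3) is the standard one from the cited literature and the bookkeeping is right: on each whole cell $\e(k_\ell+Y)$ the unfolded integrand is constant in $x$, the $x$-integration contributes $\e^{d}\abs{Y}$, the rescaling $y\mapsto \e k_\ell+\e y$ of the $(d-1)$-dimensional surface measure contributes $\e^{-(d-1)}$, and the mismatch leaves exactly the factor $\e\abs{Y}$; summing over $k\in\Xi_\e$ recovers $\int_{\widehat{\Gamma}_\e}\varphi\,d\sigma_x$. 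Deducing (4) by applying (3) to $\abs{\varphi}^p$ and (2) by commutation of evaluation with products is exactly how these facts are established in the references, and your route to (5) (commuting $\T_{\e}^{b}$ with $\pt_t$ and integrating by parts in time, with boundary terms killed by the compact support of $\varphi$) is equivalent to the paper's one-line justification via (3). The only caveat worth flagging is one the paper shares: for $\varphi\in D(\Omega_T\times\Gamma^{y})$ the expression $\T_{\e}^{b}(\varphi)$ is a slight abuse of notation, since $\T_{\e}^{b}$ is defined on functions of $(t,x)\in(0,T)\times\Gamma_\e$; your argument goes through under the natural reading (unfolding the trace $\varphi(t,x,x/\e)$), so this is a notational issue rather than a gap.
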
 
\begin{rem}Note that the last property (which not listed in \cite{doinaunf06,doinaunf12}) is a direct consequence of the integration by parts formula and the integration formula in property \eqref{Prop3_buo} of Proposition \ref{prop_buo}.
\end{rem}

\begin{rem} If $u_j \in L^p\left(0,T; W^{1,p}(\Omega_{j}^{\e})\right)$ for $p\in (1,+\infty),$ $\T_{\e}^{b}(u_j)$ is the trace on $\Gamma^{y}$ of $ \T_{\e}^{j}(u_j)$ with $j=i,e.$ In particular, by the standard trace theorem in $Y_j,$ there is a constant $C$ such that
\begin{equation*}
 \norm{\T_{\e}^{b}(u_j)}_{L^{p}\left(\Omega_T \times \Gamma^{y}\right)}^p\leq C\left( \norm{\T_{\e}^{j}(u_j)}_{L^{p}\left(\Omega_T \times Y_j\right)}^p+\norm{\nabla_y \T_{\e}^{j}(u_j)}_{L^{p}\left(\Omega_T \times Y_j\right)}^p\right).
\end{equation*}
From the properties of $ \T_{\e}^{j}(\cdot)$ in Proposition \ref{prop_uo}, it follows that
\begin{equation*}
 \norm{\T_{\e}^{b}(u_j)}_{L^{p}\left(\Omega_T \times \Gamma^{y}\right)}^p\leq C\left( \norm{u_j}_{L^{p}\left(\Omega_{j,T}^{\e}\right)}^p+\e^p\norm{\nabla u_j}_{L^{p}\left(\Omega_{j,T}^{\e}\right)}^p\right).
\end{equation*}
This inequality can be found as Remark 4.2 in \cite{doinaunf12}.
\label{trace_ineq}
\end{rem}
 The next result is the equivalent of Proposition \ref{T_e_strong} and Theorem \ref{T_e_weak}, to the case of functions defined on the boundary $\Gamma_\e$.
\begin{prop}Let $p\in [1,+\infty).$
\begin{enumerate}
\item[$1.$] Let $\varphi\in L^{p}(0,T;W^{1,p}(\Omega)).$ Then, one has the following convergence: 
$$\T_{\e}^{b}(\varphi) \underset{\e\rightarrow 0}{\longrightarrow} \varphi \text{ strongly in } L^{p}(\Omega_T\times \Gamma^{y}).$$
\item[$2.$] Let $\{ \varphi^\e \}$ be a sequence in $L^{p}(0,T;W^{1,p}(\Omega))$ such that :
$$ \varphi^\e \underset{\e\rightarrow 0}{\longrightarrow} \varphi \text{ strongly in } L^{p}(0,T;W^{1,p}(\Omega)).$$
Then,
$$\T_{\e}^{b}(\varphi^\e) \underset{\e\rightarrow 0}{\longrightarrow} \varphi \text{ strongly in } L^{p}(\Omega_T\times \Gamma^{y}).$$
\end{enumerate}
\label{T^b_e_strong}
\end{prop}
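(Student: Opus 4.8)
The plan is to follow the classical density strategy that establishes strong convergence of unfolding operators, adapting the interior argument behind Proposition \ref{T_e_strong} to the boundary. The whole proof rests on the \emph{uniform} (in $\e$) boundedness of $\T_\e^b$ as an operator on $L^p(0,T;W^{1,p}(\Omega))$, which is precisely what the trace estimate of Remark \ref{trace_ineq} supplies: applied to $\Omega$ itself it gives, for every $\varphi \in L^p(0,T;W^{1,p}(\Omega))$,
\begin{equation*}
\norm{\T_\e^b(\varphi)}_{L^p(\Omega_T \times \Gamma^{y})}^p \leq C\left( \norm{\varphi}_{L^p(\Omega_T)}^p + \e^p \norm{\nabla \varphi}_{L^p(\Omega_T)}^p\right) \leq C \norm{\varphi}_{L^p(0,T;W^{1,p}(\Omega))}^p,
\end{equation*}
with $C$ independent of $\e$. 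This uniform bound is the engine that transfers convergence from a dense class of smooth functions to the general case.

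For the first assertion, I would first prove the convergence on the dense subspace of functions that are smooth and continuous up to the boundary (say $C^1(\overline{\Omega_T})$). For such $\varphi$ the argument is direct. Using the integer-part decomposition $x = \e\bigl([x/\e]_Y + \{x/\e\}_Y\bigr)$, for $x \in \widehat{\Omega}^\e$ the point $\e[x/\e]_Y + \e y$ lies in the same $\e$-cell as $x$ for every $y \in \Gamma^{y} \subset \overline{Y}$, whence
\begin{equation*}
\abs{\e\left[\tfrac{x}{\e}\right]_Y + \e y - x} = \e\,\abs{y - \left\{\tfrac{x}{\e}\right\}_Y} \leq \e\,\mathrm{diam}(Y) \longrightarrow 0
\end{equation*}
uniformly in $(x,y)$. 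Uniform continuity of $\varphi$ then forces $\T_\e^b(\varphi) \to \varphi$ uniformly on $\widehat{\Omega}^\e_T \times \Gamma^{y}$, while on the boundary layer $\Lambda^\e_T \times \Gamma^{y}$ the operator vanishes identically and $\abs{\Lambda^\e} \to 0$, so the contribution of $\varphi$ there also disappears in $L^p$. Both contributions being controlled, this yields the claimed convergence for smooth $\varphi$.

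To remove the smoothness I would run a standard three-$\e$ argument. Given $\varphi \in L^p(0,T;W^{1,p}(\Omega))$ and $\eta > 0$, choose a smooth $\psi$ with $\norm{\varphi - \psi}_{L^p(0,T;W^{1,p}(\Omega))} < \eta$ and split
\begin{equation*}
\T_\e^b(\varphi) - \varphi = \T_\e^b(\varphi - \psi) + \left(\T_\e^b(\psi) - \psi\right) + (\psi - \varphi).
\end{equation*}
The middle term tends to zero by the smooth case; the first is bounded by $C\eta$ through the uniform estimate above; and the last, being independent of $y$, has $L^p(\Omega_T \times \Gamma^{y})$-norm equal to $\abs{\Gamma^{y}}^{1/p}\norm{\psi - \varphi}_{L^p(\Omega_T)} \leq \abs{\Gamma^{y}}^{1/p}\eta$. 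Letting first $\e \to 0$ and then $\eta \to 0$ proves assertion $1$.

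The second assertion then follows at once from the first together with linearity and the same uniform bound. Writing
\begin{equation*}
\T_\e^b(\varphi^\e) - \varphi = \T_\e^b(\varphi^\e - \varphi) + \left(\T_\e^b(\varphi) - \varphi\right),
\end{equation*}
the second summand vanishes by assertion $1$, while the first obeys $\norm{\T_\e^b(\varphi^\e - \varphi)}_{L^p(\Omega_T\times\Gamma^{y})} \leq C\norm{\varphi^\e - \varphi}_{L^p(0,T;W^{1,p}(\Omega))} \to 0$ by hypothesis. The main obstacle, and the only place where the boundary geometry genuinely intervenes, is securing the $\e$-uniformity of the operator norm of $\T_\e^b$: the naive bound from property \ref{Prop4_buo} of Proposition \ref{prop_buo} involves the surface norm $\norm{\varphi}_{L^p(\Gamma_\e)}$, which for a function defined on all of $\Omega$ is only meaningful via a trace whose constant could a priori degenerate as $\e \to 0$. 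It is exactly the $\e$-weighted trace inequality of Remark \ref{trace_ineq} that controls this scaling and makes the density argument close.
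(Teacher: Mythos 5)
Your proof is correct. Note that the paper itself offers no argument for this proposition: it is stated as the boundary analogue of Proposition \ref{T_e_strong} and deferred to the literature on the boundary unfolding operator (\cite{doinaunf06,doinaunf12}). Your density argument --- uniform continuity on the cell to handle smooth functions, the $\e$-uniform operator bound from the scaled trace inequality of Remark \ref{trace_ineq} to pass to general $\varphi\in L^p(0,T;W^{1,p}(\Omega))$, and linearity for assertion $2$ --- is exactly the standard route taken in those references, and you correctly identify the one genuinely delicate point, namely that the naive bound $\norm{\T_\e^b(\varphi)}_{L^p}\leq \e^{1/p}\abs{Y}^{1/p}\norm{\varphi}_{L^p(\Gamma_\e)}$ is useless without the $\e$-weighted trace estimate to convert the surface norm into a volume norm with a constant independent of $\e$. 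The argument is complete and self-contained.
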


\begin{thm}
Let $p\in (1,+\infty).$ Suppose that $v_\e \in L^p(\Gamma_{\e,T})$ satisfies
$$ \e^{1/p}\norm{v_\e}_{L^p(\Gamma_{\e,T})}\leq C. $$
Then, there exist $v \in L^{p}\left( \Omega_{T}\times \Gamma^{y}\right)$ such that, up to a subsequence, the following convergence hold when $\e\rightarrow 0:$
\begin{equation*}
\T_{\e}^{b}(v_\e)\rightharpoonup v \text{ weakly in }  L^{p}\left(\Omega_{T}\times \Gamma^{y}\right).
\end{equation*}
\label{T^b_e_weak}
\end{thm}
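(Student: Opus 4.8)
The plan is to turn this into a routine weak-compactness statement in the reflexive space $L^{p}(\Omega_T\times\Gamma^{y})$; essentially all of the content is carried by the scaling factor in the norm of the boundary unfolding operator recorded in Proposition \ref{prop_buo}. The strategy has exactly two steps: first produce a uniform (in $\e$ and $\delta$) bound on $\T_{\e}^{b}(v_\e)$ in $L^{p}(\Omega_T\times\Gamma^{y})$, and then extract a weakly convergent subsequence by reflexivity.

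For the bound, I would apply the norm identity of property \eqref{Prop4_buo} in Proposition \ref{prop_buo} in its space-time form, obtained by integrating the purely spatial identity over $t\in(0,T)$ and invoking Fubini's theorem. This gives
\begin{equation*}
\norm{\T_{\e}^{b}(v_\e)}_{L^{p}(\Omega_T\times\Gamma^{y})}\le \e^{1/p}\abs{Y}^{1/p}\norm{v_\e}_{L^{p}(\Gamma_{\e,T})}.
\end{equation*}
At this point the hypothesis $\e^{1/p}\norm{v_\e}_{L^{p}(\Gamma_{\e,T})}\le C$ is exactly what is needed: it absorbs the degenerating prefactor $\e^{1/p}$ and yields $\norm{\T_{\e}^{b}(v_\e)}_{L^{p}(\Omega_T\times\Gamma^{y})}\le \abs{Y}^{1/p}C$, a bound independent of both $\e$ and $\delta$.

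For the extraction, since $p\in(1,+\infty)$ the space $L^{p}(\Omega_T\times\Gamma^{y})$ is reflexive, so every norm-bounded sequence is relatively weakly sequentially compact (reflexivity together with the Eberlein–Šmulian theorem, or equivalently Banach–Alaoglu applied to the bidual). Hence there exist a limit $v\in L^{p}(\Omega_T\times\Gamma^{y})$ and a subsequence along which $\T_{\e}^{b}(v_\e)\rightharpoonup v$ weakly, which is the claim. I do not anticipate a genuine obstacle here: the whole argument hinges on matching the weight $\e^{1/p}$ appearing in the unfolding norm against the weight in the hypothesis. The only points requiring a little care are the promotion of the spatial identity \eqref{Prop4_buo} to the space-time setting on $\Gamma_{\e,T}$ (immediate by Fubini) and the observation that, because the bound is uniform in $\delta$ as well as $\e$, the extracted subsequence and the limit $v$ are genuinely independent of the microscopic parameter $\delta$.
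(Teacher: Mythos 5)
Your proof is correct and is essentially the standard argument: the scaling identity of property \eqref{Prop4_buo} in Proposition \ref{prop_buo}, integrated in time, converts the hypothesis $\e^{1/p}\norm{v_\e}_{L^p(\Gamma_{\e,T})}\leq C$ into a uniform bound on $\T_{\e}^{b}(v_\e)$ in the reflexive space $L^{p}(\Omega_T\times\Gamma^{y})$, from which weak sequential compactness gives the subsequence and the limit $v$. The paper itself only cites the literature for this result, and the cited proofs proceed in exactly this way, so there is nothing to add.
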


\begin{proof}
The proof of this theorem can be found as Proposition 4.6 in \cite{bourgeat} and Lemma 22 in \cite{mariahom} for other details.
\end{proof}

\begin{rem}
Fix $j\in \lbrace i,e\rbrace.$ Suppose that $u^\e_{j} \in L^{2}\left( 0,T; H^{1}\left( \Omega_{j}^{\e}\right)\right)$ satisfies $ \norm{u_j^\e}_{L^{p}\left( 0,T; H^{1}\left( \Omega_{j}^{\e}\right)\right)}\leq C.$ Let $$g^\e_{j}:= u^\e_{j}\vert_{\Gamma_{\e}} \in L^2(\Gamma_{\e,T}),$$
be the trace of $u^\e_{j}$ on $\Gamma_{\e}.$ Then, there exist $u_j \in L^{2}\left( 0,T; H^{1}(\Omega)\right)$ (cf. Theorem \ref{T_e_weak}) such that, up to a subsequence, the following hold when $\e\rightarrow 0:$
\begin{equation*}
 \T_{\e}^{b}(g^\e_{j})\rightharpoonup u_{j} \text{ weakly in } L^{2}\left(\Omega_T \times \Gamma^{y}\right).
\end{equation*}
We can prove this remark by following Remark \ref{conv_trace_ini_norm}-\ref{trace_conv_proof}.
\label{trace_conv}
\end{rem}

\section{Unfolding Homogenization Method}\label{unf}
In this section, we will introduce a homogenization method based on  the unfolding operator for perforated domains and on the boundary unfolding operator. The aim is to show how to obtain the macroscopic model from the meso-microscopic bidomain model. First, the weak formulation of the meso-microscopic problem is written by another one, called "unfolded" formulation, based on unfolding operators. Then, we can pass to the limit as $\e\rightarrow 0$ in the unfolded formulation using some a priori estimates and compactness argument to obtain finally the macroscopic bidomain model.
 
 
\subsection{Intracellular problem}\label{unfintra}  Our derivation bidomain model is based on a new three-scale approach. We apply the composition of unfolding operator $\T_{\delta}(\T_{\e}^{i}(\cdot))$ in the intracellular problem to obtain its homogenized equation. Recall that $u^{\e,\delta}_i$ the solution of the following initial intracellular problem:  
 \begin{equation}
 \begin{aligned}
-\nabla \cdot (\mathrm{M}_{i}^{\e,\delta}\nabla u_{i}^{\e,\delta}) &=0 &\ \text{ in } \Omega_{i,T}^{\e,\delta}, \\ -\mathrm{M}_{i}^{\e,\delta}\nabla u_{i}^{\e,\delta} \cdot n_i=\e\left( \pt_{t} v_\e+\I_{ion}( v_\e, {w}_\e)-\I_{app,\e}\right) &=\I_m &\ \text{on} \ \Gamma_{\e,T},
\\ \pt_{t} w_\e-H(v_\e,w_\e) &=0 & \text{ on }  \Gamma_{\e,T},
\\ -\mathrm{M}_{i}^{\e,\delta}\nabla u_{i}^{\e,\delta} \cdot n_{z} & =0 &\ \text{on} \ \Gamma_{\delta,T}, 
 \end{aligned}
 \label{pbinintra}
 \end{equation}
  where the intracellular conductivity matrices $\mathrm{M}^{\e,\delta}_{i}=(\mathrm{m}^{pq}_{i})_{1 \leq p,q \leq d}$ defined by: $$\mathrm{M}^{\e,\delta}_{i}(x)=\mathrm{M}_{i}\left(\dfrac{x}{\e},\dfrac{x}{\e\delta}\right), \
  a.e. \ \text{on} \ \R^d,$$
satisfying the elliptic and periodic conditions \eqref{A_M_ie}.\\

 The problem \eqref{pbinintra} satisfies the weak formulation \eqref{Fv_i_ini}.
Since $\I_{ion}(v_\e,w_\e)=\mathrm{I}_{1,ion}(v_\e)+\mathrm{I}_{2,ion}(w_\e),$ we can rewrite the formulation \eqref{Fv_i_ini} as follows:
\begin{equation}
\begin{aligned}
\iint_{\Gamma_{\e,T}} &\e\pt_t v_\e\varphi_i \ d\sigma_xdt +\iint_{\Omega_{i,T}^{\e,\delta}}\mathrm{M}_{i}^{\e,\delta}\nabla u_{i}^{\e,\delta}\cdot\nabla\varphi_i \ dxdt
\\& +\iint_{\Gamma_{\e,T}} \e\mathrm{I}_{1,ion}(v_\e)\varphi_i \ d\sigma_xdt +\iint_{\Gamma_{\e,T}} \e\mathrm{I}_{2,ion}(w_\e)\varphi_i \ d\sigma_xdt
\\ & =\iint_{\Gamma_{\e,T}} \e\I_{app,\e}\varphi_i \ d\sigma_xdt.
\end{aligned}
\label{Fv_i}
\end{equation}
 
 We denote by $E_i$ with $i = 1,\dots, 5$ the terms of the previous equation which is rewritten as follows (to respect the order):
 \begin{equation*}
E_1+E_2+E_3+E_4=E_5.
 \end{equation*}
\subsubsection{\textbf{"Unfolded" formulation of the intracellular problem}}
The unfolding operator is used below to unfold the oscillating functions such that they are expressed in terms of global and local variables describing positions at the upper and lower heterogeneity scales, respectively. Using the properties of the unfolding operator, we rewrite the weak formulation \eqref{Fv_i} in the "unfolded" form.\\
Using property \eqref{Prop3_buo} of Proposition \ref{prop_buo}, then the first term is rewritten as follows:
\begin{align*}
E_1 
&=\iint_{\widehat{\Gamma}_{\e,T}} \e\pt_t v_\e\varphi_i \ d\sigma_xdt+\iint_{\Gamma_{\e,T}\cap\Lambda_{T}^{\e}} \e\pt_t v_\e\varphi_i \ d\sigma_xdt
\\&=\dfrac{1}{\abs{Y}}\iiint_{\Omega_{T}\times \Gamma^{y}}\T_{\e}^{b}(\pt_t v_\e)\T_{\e}^{b}(\varphi_i)\ dxd\sigma_ydt+\iint_{\Gamma_{\e,T}\cap\Lambda_{T}^{\e}} \e\pt_t v_\e\varphi_i \ d\sigma_xdt
\\& :=J_1+R_1.
\end{align*}

Similarly, we rewrite the second term using property \eqref{Prop2_cuo} of Proposition \ref{prop_cuo}:
\begin{align*}
E_2 &=\dfrac{1}{\abs{Y}}\dfrac{1}{\abs{Z}}\iiiint_{\Omega_{T}\times Y_i \times Z_{c}}\T_{\delta}(\T_{\e}^{i}(\mathrm{M}_{i}^{\e,\delta})) \T_{\delta}(\T_{\e}^{i}(\nabla u_{i}^{\e,\delta}))\T_{\delta}(\T_{\e}^{i}(\nabla\varphi_i)) \ dxdydzdt
\\& \quad +\iint_{\Lambda_{i,T}^{\e,\delta}}\mathrm{M}_{i}^{\e,\delta}\nabla u_{i}^{\e,\delta}\cdot\nabla\varphi_i \ dxdt
\\&:=J_2+R_2
\end{align*}

Due to the form of $\mathrm{I}_{k,ion},$ we use property \eqref{Prop2_buo}-\eqref{Prop3_buo} of Proposition \ref{prop_buo} to obtain $\T_{\e}^{b}\left(\mathrm{I}_{k,ion}(\cdot)\right)=\mathrm{I}_{k,ion}\left( \T_{\e}^{b}(\cdot)\right)$ for $k=1,2$ and we arrive to: 
\begin{align*}
E_3
&=\dfrac{1}{\abs{Y}}\iiint_{\Omega_{T}\times \Gamma^{y}}\T_{\e}^{b}\left(\mathrm{I}_{1,ion}(v_\e)\right) \T_{\e}^{b}(\varphi_i)\ dxd\sigma_ydt+\iint_{\Gamma_{\e,T}\cap\Lambda_{T}^{\e}} \e\mathrm{I}_{1,ion}(v_\e)\varphi_i \ d\sigma_xdt
\\&=\dfrac{1}{\abs{Y}}\iiint_{\Omega_{T}\times \Gamma^{y}}\mathrm{I}_{1,ion}\left( \T_{\e}^{b}(v_\e)\right) \T_{\e}^{b}(\varphi_i)\ dxd\sigma_ydt+\iint_{\Gamma_{\e,T}\cap\Lambda_{T}^{\e}} \e\mathrm{I}_{1,ion}(v_\e)\varphi_i \ d\sigma_xdt
\\& :=J_3+R_3\\
E_4
& =\dfrac{1}{\abs{Y}}\iiint_{\Omega_{T}\times \Gamma^{y}}\T_{\e}^{b}(\mathrm{I}_{2,ion}(w_\e))\T_{\e}^{b}(\varphi_i)\ dxd\sigma_ydt+\iint_{\Gamma_{\e,T}\cap\Lambda_{T}^{\e}} \e\mathrm{I}_{2,ion}(w_\e)\varphi_i \ d\sigma_xdt
\\& =\dfrac{1}{\abs{Y}}\iiint_{\Omega_{T}\times \Gamma^{y}}\mathrm{I}_{2,ion}\left( \T_{\e}^{b}(w_\e)\right) \T_{\e}^{b}(\varphi_i)\ dxd\sigma_ydt+\iint_{\Gamma_{\e,T}\cap\Lambda_{T}^{\e}} \e\mathrm{I}_{2,ion}(w_\e)\varphi_i \ d\sigma_xdt
\\& :=J_4+R_4
\\
E_5
&=\dfrac{1}{\abs{Y}}\iiint_{\Omega_{T}\times \Gamma^{y}}\T_{\e}^{b}(\I_{app,\e})\T_{\e}^{b}(\varphi_i)\ dxd\sigma_ydt+\iint_{\Gamma_{\e,T}\cap\Lambda_{T}^{\e}} \e\I_{app,\e}\varphi_i \ d\sigma_xdt
\\& :=J_5+R_5
\end{align*} 

 Collecting the previous estimates, we readily obtain from \eqref{Fv_i} the following "unfolded" formulation:
  \begin{equation}
\begin{aligned}
& \dfrac{1}{\abs{Y}}\iiint_{\Omega_{T}\times \Gamma^{y}}\T_{\e}^{b}(\pt_t v_\e)\T_{\e}^{b}(\varphi_i)\ dxd\sigma_ydt
\\ & +\dfrac{1}{\abs{Y}}\dfrac{1}{\abs{Z}}\iiiint_{\Omega_{T}\times Y_i \times Z_{c}}\T_{\delta}(\T_{\e}^{i}(\mathrm{M}_{i}^{\e,\delta})) \T_{\delta}(\T_{\e}^{i}(\nabla u_{i}^{\e,\delta}))\T_{\delta}(\T_{\e}^{i}(\nabla\varphi_i)) \ dxdydzdt
\\&+\dfrac{1}{\abs{Y}}\iiint_{\Omega_{T}\times \Gamma^{y}}\mathrm{I}_{1,ion}\left( \T_{\e}^{b}(v_\e)\right) \T_{\e}^{b}(\varphi_i)\ dxd\sigma_ydt
\\&+\dfrac{1}{\abs{Y}}\iiint_{\Omega_{T}\times \Gamma^{y}}\mathrm{I}_{2,ion}\left( \T_{\e}^{b}(w_\e)\right) \T_{\e}^{b}(\varphi_i)\ dxd\sigma_ydt
\\&=\dfrac{1}{\abs{Y}}\iiint_{\Omega_{T}\times \Gamma^{y}}\T_{\e}^{b}(\I_{app,\e})\T_{\e}^{b}(\varphi_i)\ dxd\sigma_ydt+R_5-R_4-R_3-R_2-R_1
\end{aligned}
\label{Fv_i_r}
\end{equation}

 Similarly, the "unfolded" formulation of \eqref{Fv_d_ini} is given by:
 \begin{equation}
\begin{aligned}
&\dfrac{1}{\abs{Y}}\iiint_{\Omega_{T}\times \Gamma^{y}}\T_{\e}^{b}(\pt_t w_\e)\T_{\e}^{b}(\phi)\ dxd\sigma_ydt
\\ & -\dfrac{1}{\abs{Y}}\iiint_{\Omega_{T}\times \Gamma^{y}}H(\T_{\e}^{b}(v_\e),\T_{\e}^{b}(w_\e))\T_{\e}^{b}(\phi)\ dxd\sigma_ydt
\\& =-\e\iint_{\Gamma_{\e,T}\cap\Lambda_{T}^{\e}} \pt_t w_\e\phi \ d\sigma_xdt+\e \iint_{\Gamma_{\e,T}\cap\Lambda_{T}^{\e}}  H(v_\e, w_\e) \phi \ d\sigma_xdt
\\&:=R_6+R_7
\end{aligned}
\label{Fv_d_r}
\end{equation}

 The intracellular homogenized model has been derived using the unfolding homogenization method at two-levels. The first level homogenization concerns the asymptotic analysis $\delta\rightarrow 0$ related to the electrical activity behavior in the micro-porous structure situated in $\Omega^{\e,\delta}_i.$ At the second level homogenization, the asymptotic analysis $\e\rightarrow 0$ is related to the electrical activity behavior in the  mesoscopic structure situated in $\Omega^{\e,\delta}_i.$ Since $\delta\leq\e,$ we pass to the limit directly in the unfolded formulation when $\e\rightarrow 0.$
\subsubsection{\textbf{Convergence of the "Unfolded" formulation}}  
In this part, we establish the passage to the limit in \eqref{Fv_i_r}-\eqref{Fv_d_r}. First, we prove that:
\begin{equation*}
R_1,\cdots,R_7 \underset{\e \rightarrow 0}{\longrightarrow} 0,
\end{equation*}
by making use of estimates \eqref{E_vw}-\eqref{E_dtvw}. So, we prove that $R_2\rightarrow 0$ when $\e\rightarrow 0$ and the proof for the other terms is similar. 
 First, by  Cauchy-Schwarz inequality,
one has 
\begin{equation*}
\begin{aligned}
R_2=\iint_{\Lambda_{i,T}^{\e,\delta}}\mathrm{M}_{i}^{\e,\delta}(x)\nabla u_{i}^{\e,\delta}\cdot\nabla\varphi_i \ dxdt\leq\norm{\mathrm{M}_{i}^{\e,\delta}\nabla u_{i}^{\e,\delta}}_{L^2\left(\Omega_{i,T}^{\e,\delta}\right)} \left(\iint_{\Lambda_{i,T}^{\e,\delta}} \abs{\nabla\varphi_i}^2 \ dxdt\right)^{1/2}. 
\end{aligned}
\end{equation*}
In addition, we observe that $\abs{\Lambda_{i}^{\e,\delta}}\rightarrow 0$ and $\nabla\varphi_i \in L^2(\Omega_i^{\e,\delta}).$ Consequently, by Lebesgue dominated convergence theorem, one gets 
\begin{equation*}
\iint_{\Lambda_{i}^{\e,\delta}} \abs{\nabla\varphi_i}^2\rightarrow 0, \text{ as } \e\rightarrow 0.
\end{equation*} 
Finally, by using Holder inequality, the result follows by using estimate \eqref{E_u} and assumption \eqref{A_M_ie}. 
   
Let us now elaborate the convergence results of $J_1,\cdots,J_5$. First, we choose a special form of test functions to capture the mesoscopic and microscopic informations at each structural level. Then, we consider that the test functions have the following form:
\begin{equation}
\varphi_{i}^{\e,\delta}=\Psi_i(t,x)+\e\Psi_1(t,x)\Phi_1^{\e}(x) +\e\delta\Psi_2(t,x)\Phi_2^{\e}(x) \Theta^{\e,\delta}(x),
\end{equation}
with functions $\Phi_k^{\e}$ and $\Theta^{\e,\delta}$ defined by: 
$$\Phi_k^{\e}(x)=\Phi_k\left( \dfrac{x}{\e}\right), \ \text{ for } k=1,2 \text{ and } \Theta^{\e,\delta}(x)=\Theta\left( \dfrac{x}{\e\delta}\right),$$
where $\Psi_i, \Psi_{k},$  are in $D(\Omega_T),$ $\Phi_k$ in $H_{\#}^1(Y_i)$ for $k=1,2$ and $\Theta$ in $H_{\#}^1(Z_{c}).$
 We have:
 \begin{equation*}
 \nabla \varphi_{i}^{\e,\delta}=\nabla_x \Psi_i+\Psi_1\nabla_y\Phi_1^{\e}+\Psi_2 \Phi_2^{\e}\nabla_z\Theta^{\e,\delta}+\e\nabla_x\Psi_1\Phi_1^{\e}+\e\delta\nabla_x\Psi_2 \Phi_2^{\e} \Theta^{\e,\delta}+\delta\Psi_2\nabla_y \Phi_2^{\e} \Theta^{\e,\delta}.
 \end{equation*}
 Due to the regularity of test functions together with Proposition \ref{T_d_e_strong} and  Proposition \ref{T^b_e_strong}, there holds: 
 \begin{align*}
&\T_{\delta}\left(\T_{\e}^{i}(\varphi_{i}^{\e,\delta})\right)\rightarrow \Psi_i \text{ strongly in } L^{2}\left(\Omega_T \times Y_i\times Z_{c}\right),
\\& \T_{\delta}\left(\T_{\e}^{i}(\Psi_1\Phi_1^{\e})\right)\rightarrow \Psi_1(t,x) \Phi_1(y)\text{ strongly in } L^{2}\left(\Omega_T \times Y_i\times Z_{c}\right),
\\& \T_{\delta}\left(\T_{\e}^{i}(\Psi_2\Phi_2^{\e}\Theta^{\e,\delta})\right)\rightarrow \Psi_2(t,x) \Phi_2(y) \Theta(z)\text{ strongly in } L^{2}\left(\Omega_T \times Y_i\times Z_{c}\right),
\\& \T_{\delta}\left(\T_{\e}^{i}\left( \nabla\varphi_{i}^{\e,\delta}\right) \right)\rightarrow \nabla_x \Psi_i+\Psi_1\nabla_y\Phi_1+\Psi_2 \Phi_2\nabla_z\Theta \text{ strongly in } L^{2}\left(\Omega_T \times Y_i\times Z_{c}\right),
\\& \T_{\e}^{b}(\varphi_{i}^{\e,\delta}) \rightarrow \Psi_i \text{ strongly in } L^{2}(\Omega_T\times \Gamma^{y}).
\end{align*}

Next, we want to use the a priori estimates \eqref{E_vw}-\eqref{E_dtvw} to verify that the remaining terms of the equations are weakly convergent in the unfolded formulation \eqref{Fv_i_r}-\eqref{Fv_d_r}. Using estimation \eqref{E_u}, we deduce from Theorem \ref{T_d_e_weak} that there exist $u_i \in L^{2}\left( 0,T; H^{1}(\Omega)\right),\widehat{u}_i\in L^{2}\left( 0,T; L^{2}\left(\Omega, H_{\#}^{1}(Y_i)\right)\right)$ and $\widetilde{u}_i\in L^{2}\left( 0,T; L^{2}\left(\Omega \times Y_i, H_{\#}^{1}(Z_{c})\right)\right)$ such that, up to a subsequence, the following convergences hold as $\e$ goes to zero:
\begin{align*}
&\T_{\delta}\left(\T_{\e}^{i}(u^{\e,\delta})\right)\rightharpoonup u_i \text{ weakly in } L^{2}\left( 0,T; L^{2}\left(\Omega \times Y_i\times Z_{c}\right)\right),
\\ & \T_{\delta}\left(\T_{\e}^{i}(\nabla u^{\e,\delta})\right) \rightharpoonup \nabla u_i+\nabla_y \widehat{u}_i+\nabla_z \widetilde{u}_i \text{ weakly in } L^{2}(\Omega_T\times Y_i\times Z_{c}),
\end{align*}
with the space $H_{\#}^{1}$ is given by \eqref{W}.
Thus, since $\T_{\delta}\left(\T_{\e}^{i}\left(\mathrm{M}_{i}^{\e,\delta}\right) \right)\rightarrow \mathrm{M}_{i}$ a.e in $\Omega\times Y_i \times Z_{c},$  one obtains:
$$J_2\rightarrow\dfrac{1}{\abs{Y}}\dfrac{1}{\abs{Z}}\iiiint_{\Omega_{T}\times Y_i \times Z_{c}}\mathrm{M}_{i} \left[ \nabla u_{i}+\nabla_y \widehat{u}_i+\nabla_z \widetilde{u}_i\right] \left[ \nabla_x \Psi_i+\Psi_1\nabla_y\Phi_1+\Psi_2 \Phi_2\nabla_z\Theta\right]  \ dxdydzdt.$$

\begin{rem} Since $u_{i}$ is independent of $y$ and $z$ then it does not oscillate "rapidly". This is why now expect $u_{i}$ to be the "homogenized solution". To find the homogenized equation, it is sufficient to find an equation in $\Omega$ satisfied by $u_{i}$ independent on $y$ and $z.$
\end{rem} 
 
 Furthermore, we need to establish the weak convergence of the unfolded sequences that corresponds to $v_\e, w_\e$ and $\I_{app,\e}.$ In order to establish the convergence of $\T_{\e}^{b}(\pt_t v_\e),$ we use estimation \eqref{E_dtvw} to get
 $$ \norm{\T_{\e}^{b}(\pt_t v_\e)}_{L^{2}(\Omega_T \times \Gamma^{y})}\leq\e^{1/2}\abs{Y}^{1/2}\norm{\pt_t v_\e}_{L^{2}(\Gamma_{\e,T})}\leq C.$$
So there exists $V\in L^{2}(\Omega_T \times \Gamma^{y})$ such that $\T_{\e}^{b}(\pt_t v_\e)\rightharpoonup V$  weakly in $L^{2}(\Omega_T \times \Gamma^{y}).$ By a classical integration argument, one can show that $V=\pt_t v.$ Therefore, we deduce from Theorem \ref{T^b_e_weak} that
$$\T_{\e}^{b}(\pt_t v_\e)\rightharpoonup \pt_t v \text{ weakly in } L^{2}(\Omega_T \times \Gamma^{y}).$$
Thus, we obtain
$$J_1=\dfrac{1}{\abs{Y}}\iiint_{\Omega_{T}\times \Gamma^{y}} \T_{\e}^{b}(\pt_t v_\e) \T_{\e}^{b}(\varphi_i)\ dxd\sigma_ydt \rightarrow \dfrac{1}{\abs{Y}}\iiint_{\Omega_{T}\times \Gamma^{y}}\pt_t v \Psi_i \ dxd\sigma_ydt.$$
\begin{rem}\label{conv_trace_ini_norm}
\begin{enumerate}[label=(\alph*)]
\item \label{trace_conv_proof} We observe that the limit $v$ coincides with $u_i-u_e.$ Indeed, it follows that, by using property \eqref{Prop3_buo} of Proposition \ref{prop_buo},
\begin{align*}
\e\iint_{\Gamma_{\e,T}} v_\e\varphi \ d\sigma_xdt
&=\e\iint_{\widehat{\Gamma}_{\e,T}} v_\e\varphi \ d\sigma_xdt+\e\iint_{\Gamma_{\e,T}\cap\Lambda_{T}^{\e}} v_\e\varphi \ d\sigma_xdt
\\& =\dfrac{1}{\abs{Y}}\iiint_{\Omega_{T}\times \Gamma^{y}} \T_{\e}^{b}(v_\e) \T_{\e}^{b}(\varphi)\ dxd\sigma_ydt+\iint_{\Gamma_{\e,T}\cap\Lambda_{T}^{\e}} \e v_\e\varphi \ d\sigma_xdt
\\ & := J_{\e}+R_{\e},
\end{align*}
for all $\varphi \in C^{\infty}(\Omega_T).$ We can similarly prove that $R_{\e}\rightarrow 0$ as in the proof for the terms $R_1,\dots, R_7$ when $\e$ goes to zero. Then, it sufficient to prove the convergence results of $J_\e$ when $\e \rightarrow 0.$ On the one hand, to establish the convergence of $\T_{\e}^{b}(v_\e),$ we use estimation \eqref{E_vr} to get
 $$ \norm{\T_{\e}^{b}(v_\e)}_{L^{2}(\Omega_T \times \Gamma^{y})}\leq\e^{1/2}\abs{Y}^{1/2}\norm{v_\e}_{L^{2}(\Gamma_{\e,T})}\leq C.$$
So, we deduce from Theorem \ref{T^b_e_weak} that there exists $v\in L^{2}(\Omega_T \times \Gamma^{y})$ such that $\T_{\e}^{b}(v_\e)\rightharpoonup v$  weakly in $L^{2}(\Omega_T \times \Gamma^{y}).$ 
Therefore, we obtain
$$J_\e=\dfrac{1}{\abs{Y}}\iiint_{\Omega_{T}\times \Gamma^{y}} \T_{\e}^{b}(v_\e) \T_{\e}^{b}(\varphi)\ dxd\sigma_ydt \rightarrow \dfrac{1}{\abs{Y}}\iiint_{\Omega_{T}\times \Gamma^{y}} v \varphi \ dxd\sigma_ydt.$$ On the other hand, since $v_{\e}=\left(u_{i}^{\e}-u_{e}^{\e}\right)\vert_{\Gamma_{\e,T}}$ and, due to the fact that $\T_{\e}^{b}(u_{j}^{\e})$ is the trace on $\Gamma^{y}$ of $ \T_{\e}^{j}(u_{j}^{\e})$ for $j=i,e$ $($consult Remark \ref{trace_ineq}$)$, we can rewrite $J_{\e}$ as follows
\begin{align*}
J_{\e} &=\dfrac{1}{\abs{Y}}\iiint_{\Omega_{T}\times \Gamma^{y}} \T_{\e}^{b}(v_\e) \T_{\e}^{b}(\varphi)\ dxd\sigma_ydt
\\ & =\dfrac{1}{\abs{Y}}\iiint_{\Omega_{T}\times \Gamma^{y}} \T_{\e}^{b}\left(\left(u_{i}^{\e}-u_{e}^{\e}\right)\vert_{\Gamma_{\e,T}}\right) \T_{\e}^{b}(\varphi)\ dxd\sigma_ydt
\\ & =\dfrac{1}{\abs{Y}}\iiint_{\Omega_{T}\times \Gamma^{y}} \left(\T_{\e}^{i}\left(u_{i}^{\e}\right) -\T_{\e}^{e}\left(u_{e}^{\e}\right)\right)\vert_{\Omega_{T}\times\Gamma^{y}} \T_{\e}^{b}(\varphi)\ dxd\sigma_ydt.
\end{align*}
Now, by using Theorem \ref{T_e_weak}, there exist $u_j \in L^{2}\left( 0,T; H^1(\Omega)\right)$ such that $\T_{\e}^{j}(u^{\e}_{j})\rightharpoonup u_{j}$ weakly in $L^{2}\left( 0,T; L^{2}\left(\Omega, H^1(Y_{j})\right)\right),$ for $j=i,e.$ Thus, we deduce
\begin{align*}
J_{\e} & \rightarrow \dfrac{1}{\abs{Y}}\iiint_{\Omega_{T}\times \Gamma^{y}}\left(u_{i}-u_{e}\right)\vert_{\Omega_{T}\times\Gamma^{y}} \varphi \ dxd\sigma_ydt.
\end{align*}
Herein, we used the integration formula of the operator $\T_{\e}^{b}$ in the first step and exploited that $v$ is independent of $y$ and $v$ coincides with $u_{i}-u_{e}$ in the last step. This prove Remark \ref{trace_conv} for $v_\e=(u_i^{\e}-u_{e}^{\e})\vert_{\Gamma_{\e}}.$\\
\item \label{conv_cond_ini} Moreover, we have assumed that the initial data $v_{0,\e},w_{0,\e}$ in \eqref{cond_ini_vw}, are also uniformly bounded in the adequate norm $($see assumption \eqref{A_vw0}$)$. Therefore, in the same way as the previous proof \ref{trace_conv_proof}, using again the integration formula $(3)$ of the operator $\T_{\e}^{b}$, we know that there exist $v'_{0}, w'_{0} \in L^{2}(\Omega_T \times \Gamma^{y})$ such that, up to a subsequence, 
\begin{align*}
& \e\iint_{\Gamma_{\e}} v_{0,\e}\phi \ d\sigma_x
\rightarrow\dfrac{\abs{\Gamma^{y}}}{\abs{Y}} \int_{\Omega} v_{0}  \phi \ dx,
\\& \e\iint_{\Gamma_{\e}} w_{0,\e}\phi \ d\sigma_x
\rightarrow\dfrac{\abs{\Gamma^{y}}}{\abs{Y}} \int_{\Omega} w_{0} \phi \ dx,
\end{align*}
for all $\phi \in C^{\infty}(\Omega),$ where $v_{0}= \dfrac{1}{\abs{\Gamma^{y}}}\displaystyle\int_{\Gamma^{y}}v'_{0} \ d\sigma_y$ and $w_{0}= \dfrac{1}{\abs{\Gamma^{y}}}\displaystyle\int_{\Gamma^{y}}w'_{0} \ d\sigma_y.$
\item Finally, one can pass to the limit in the normalization condition defined by \eqref{normalization_cond} to recover a condition on the average of $u_{e}$ (the limit of $\T_{\e}^{e}(u_{e}^{\e})$) and we get the following equation, for all $\varphi \in C^{0}([0,T]),$
\begin{align*}
0=\int_{0}^{T}\left(\int_{\Omega_{e}^{\e}} u_{e}^{\e}dx\right) \varphi \ dt
&=\dfrac{1}{\abs{Y}}\int_{0}^{T}\left(\iint_{\Omega\times Y_{e}} \T_{\e}^{e}(u_{e}^{\e})dxdy\right)  \varphi\ dt+ \int_{0}^{T}\left( \int_{\Lambda_{e}^{\e}}  u_{e}^{\e} dx\right)  \varphi\ dt
\\ & \rightarrow 0= \dfrac{\abs{Y_e}}{\abs{Y}}\int_{0}^{T}\left(\int_{\Omega} u_{e}dx\right)  \varphi\ dt,
\end{align*}
where the second term in the previous equality goes to zero as the proof for the terms $R_1,\dots, R_7$ when $\e\rightarrow 0$. This implies that we have, for almost all $t\in [0,T],$
\begin{equation*}
\int_{\Omega} u_{e} (t,x)dx=0.
\end{equation*}
\end{enumerate}  
\end{rem}
Now, making use of estimate \eqref{E_vw} with property \eqref{Prop4_buo} of Proposition \ref{prop_buo}, one has
 $$ \norm{\T_{\e}^{b}(w_\e)}_{L^{2}(\Omega_T \times \Gamma^{y})}\leq\e^{1/2}\abs{Y}^{1/2}\norm{w_\e}_{L^{2}(\Gamma_{\e,T})}\leq C.$$
Then, up to a subsequence, 
$$\T_{\e}^{b}(w_\e)\rightharpoonup w \text{ weakly in } L^{2}(\Omega_T \times \Gamma^{y}).$$
So, by linearity of $\mathrm{I}_{2,ion},$ we have:
$$J_4=\dfrac{1}{\abs{Y}}\iiint_{\Omega_{T}\times \Gamma^{y}}\mathrm{I}_{2,ion}\left( \T_{\e}^{b}(w_\e)\right) \T_{\e}^{b}(\varphi_i)\ dxd\sigma_ydt\rightarrow \dfrac{1}{\abs{Y}}\iiint_{\Omega_{T}\times \Gamma^{y}}\mathrm{I}_{2,ion}(w) \Psi_i \ dxd\sigma_ydt.$$
Similarly, we can prove the convergence of $\T_{\e}^{b}(\I_{app,\e}),$ by using assumption \eqref{A_iapp}, to get
 $$ \norm{\T_{\e}^{b}(\I_{app,\e})}_{L^{2}(\Omega_T \times \Gamma^{y})}\leq\e^{1/2}\abs{Y}^{1/2}\norm{\I_{app,\e}}_{L^{2}(\Gamma_{\e,T})}\leq C.$$
So we can conclude from Theorem \ref{T^b_e_weak} that there exists $\I_{app,0}\in L^{2}(\Omega_T \times \Gamma^{y})$ such that $\T_{\e}^{b}(\I_{app,\e})\rightharpoonup \I_{app,0}$  weakly in $L^{2}(\Omega_T \times \Gamma^{y}).$ 
Thus, we obtain the following  convergence:
$$J_5=\dfrac{1}{\abs{Y}}\iiint_{\Omega_{T}\times \Gamma^{y}}\T_{\e}^{b}(\I_{app,\e})\T_{\e}^{b}(\varphi_i)\ dxd\sigma_ydt\rightarrow \dfrac{\abs{\Gamma^{y}}}{\abs{Y}}\iint_{\Omega_{T}}\I_{app}\Psi_i\ dxdt,$$
where $\I_{app}= \dfrac{1}{\abs{\Gamma^{y}}}\displaystyle\int_{\Gamma^{y}}\I_{app,0} \ d\sigma_y.$
 
 It remains to obtain the limit of $J_3$ containing the ionic function $\mathrm{I}_{1,ion}.$ By the regularity of $\varphi_{i}$, it sufficient to show the weak convergence of $\mathrm{I}_{1,ion}\left( \T_{\e}^{b}(v_\e)\right) $ to $\mathrm{I}_{1,ion}(v)$ in $L^2(\Omega_T\times
\Gamma^{y}).$  Due to the non-linearity of $\mathrm{I}_{1,ion},$ the weak convergence will not be enough. Therefore, we need also the strong convergence of $\T_{\e}^{b}(v_\e)$ to $v$ in $L^2(\Omega_T\times
\Gamma^{y})$ by using Kolmogorov-Riesz type compactness criterion \ref{kolmo}. 
Next, we prove by Vitali's Theorem the strong convergence of $\mathrm{I}_{1,ion}\left( \T_{\e}^{b}(v_\e)\right)$ to $\mathrm{I}_{1,ion}(v)$ in $L^q(\Omega_T\times \Gamma^{y}), \ \forall q\in[1,r/(r-1))$ with $r\in (2,+\infty).$\\
To cope with this, we derive the convergence of the nonlinear term $\mathrm{I}_{1,ion},$ in the following lemma:
\begin{lem}
The following convergence holds:\begin{equation*}
  \T_{\e}^{b}(v_\e)\rightarrow v \text{ strongly in } L^2(\Omega_T\times \Gamma^{y}),
\end{equation*}
 as $\e\rightarrow 0.$ Moreover, we have:
\begin{equation*}
\mathrm{I}_{1,ion}\left( \T_{\e}^{b}(v_\e)\right)\rightarrow \mathrm{I}_{1,ion}(v)\text{ strongly in } L^q(\Omega_T\times \Gamma^{y}), \ \forall q\in[1,r/(r-1)),
\end{equation*}
as $\e\rightarrow 0.$
\end{lem}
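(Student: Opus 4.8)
The plan is to deduce the second (nonlinear) convergence from the first, and to obtain the first by upgrading the weak convergence $\T_{\e}^{b}(v_\e)\rightharpoonup v$ already established in Remark \ref{conv_trace_ini_norm} to a strong one, using the Kolmogorov--Riesz compactness criterion \ref{kolmo}. I first treat the reduction, which is routine. Granting $\T_{\e}^{b}(v_\e)\to v$ strongly in $L^2(\Omega_T\times\Gamma^{y})$, a subsequence converges a.e. on $\Omega_T\times\Gamma^{y}$; since $\mathrm{I}_{1,ion}$ is continuous (indeed $C^1$), the composition converges a.e., $\mathrm{I}_{1,ion}(\T_{\e}^{b}(v_\e))\to \mathrm{I}_{1,ion}(v)$. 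To turn this into $L^q$ convergence I would apply Vitali's theorem, so it remains to check equi-integrability of $\{\abs{\mathrm{I}_{1,ion}(\T_{\e}^{b}(v_\e))}^q\}$. The growth bound in \eqref{A_H_I} gives $\abs{\mathrm{I}_{1,ion}(\T_{\e}^{b}(v_\e))}^q\le C(1+\abs{\T_{\e}^{b}(v_\e)}^{(r-1)q})$, while \eqref{E_vr} and property \eqref{Prop4_buo} of Proposition \ref{prop_buo} show that $\T_{\e}^{b}(v_\e)$ is bounded in $L^r(\Omega_T\times\Gamma^{y})$; hence $\abs{\mathrm{I}_{1,ion}(\T_{\e}^{b}(v_\e))}^q$ is bounded in $L^{r/((r-1)q)}$. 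The restriction $q<r/(r-1)$ makes this exponent strictly larger than $1$, which yields equi-integrability, and Vitali's theorem then gives the strong $L^q$ convergence; uniqueness of the limit extends it to the whole sequence.

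For the first convergence I would verify the hypotheses of the Kolmogorov--Riesz criterion \ref{kolmo} for the family $\{\T_{\e}^{b}(v_\e)\}$ in $L^2(\Omega_T\times\Gamma^{y})$. Uniform boundedness follows from \eqref{E_vw} together with property \eqref{Prop4_buo} of Proposition \ref{prop_buo}, and the tightness condition is automatic since $\Omega_T\times\Gamma^{y}$ is bounded. For equi-continuity in the time variable I would use that $\T_{\e}^{b}$ acts only on the space variables, so it commutes with $\pt_t$ (immediate from \eqref{op_b_e}); thus $\pt_t\T_{\e}^{b}(v_\e)=\T_{\e}^{b}(\pt_t v_\e)$ is bounded in $L^2(\Omega_T\times\Gamma^{y})$ by \eqref{E_dtvw}, whence $\norm{\tau_h^{t}\T_{\e}^{b}(v_\e)-\T_{\e}^{b}(v_\e)}_{L^2}\le \abs{h}\,\norm{\pt_t\T_{\e}^{b}(v_\e)}_{L^2}\le C\abs{h}$ uniformly in $\e$.

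The main obstacle is the equi-continuity in the space variables. In the fast (local) variable $y\in\Gamma^{y}$ the situation is favourable: writing $\T_{\e}^{b}(v_\e)=\T_{\e}^{b}(u_i^{\e,\delta})-\T_{\e}^{b}(u_e^{\e})$ as a difference of traces of the unfolded bulk fields, property \eqref{Prop4_uo} of Proposition \ref{prop_uo} gives $\nabla_y\T_{\e}^{j}(u_j^{\e})=\e\,\T_{\e}^{j}(\nabla u_j^{\e})=O(\e)$ in $L^2$ by \eqref{E_u}, so tangential translations are controlled uniformly and in fact vanish. The genuine difficulty is the translation in the slow variable $x$: since $\T_{\e}^{b}(v_\e)$ is piecewise constant on the $\e$-cells, a shift in $x$ measures the cell-to-cell increments of $v_\e$, which the boundary norms alone do not control. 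Here I would again exploit the trace structure, estimating these increments through the macroscopic part of the $H^1$ bound \eqref{E_u} via Remark \ref{trace_ineq} and the boundary unfolding, following the compactness arguments of \cite{bourgeat,mariahom}. Once the three equi-continuity estimates are in hand, criterion \ref{kolmo} gives relative compactness of $\{\T_{\e}^{b}(v_\e)\}$ in $L^2(\Omega_T\times\Gamma^{y})$; every limit point must coincide with the already-identified weak limit $v$, so the full sequence converges strongly to $v$, which closes the argument.
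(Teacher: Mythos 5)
Your overall strategy (Kolmogorov--Riesz compactness for the strong $L^2$ convergence, then a.e.\ convergence plus an $L^{r/(r-1)}$ bound and Vitali's theorem for the nonlinear term) is the same as the paper's, and your treatment of the second convergence is essentially identical to the paper's argument and correct. Two points in the first half, however, are genuine gaps.

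First, condition $(iii)$ of Proposition \ref{kolmo} is not ``automatic since $\Omega_T\times\Gamma^{y}$ is bounded'': it requires $\sup_{\e}\int_{\Omega\setminus\Omega_\lambda}\card{\T_{\e}^{b}(v_\e)}^2\to 0$ as $\lambda\to 0$, i.e.\ that no mass of the \emph{family} concentrates near $\pt\Omega$, and uniform boundedness in $L^2$ does not rule out such concentration. The paper obtains it from the higher integrability \eqref{E_vr}: by H\"older, $\int_{\Omega\setminus\Omega_\lambda}\card{\T_{\e}^{b}(v_\e)}^2\le \abs{\Omega\setminus\Omega_\lambda}^{(r-2)/r}\norm{\T_{\e}^{b}(v_\e)}_{L^r}^2\le C\abs{\Omega\setminus\Omega_\lambda}^{(r-2)/r}$, which uses $r>2$ in an essential way. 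Second, and more seriously, the $x$-translation estimate --- which you correctly single out as the heart of the matter --- is not actually supplied. It cannot be extracted from the $H^1$ bounds \eqref{E_u} and the trace inequality of Remark \ref{trace_ineq} alone: $\T_{\e}^{b}(v_\e)$ is piecewise constant in $x$ on the $\e$-cells, so a translation by $h$ comparable to $\e$ picks up the full cell-to-cell increments $v_\e(t,\cdot+\e\ell)-v_\e(t,\cdot)$, and a uniformly bounded, rapidly oscillating $H^1$ sequence gives no uniform-in-$\e$ smallness of these increments after unfolding. The paper's proof of this step is an energy argument on the equation itself: one tests the variational formulation \eqref{Fv_i_ini} with $\varphi_j=\eta^2\left(\tau_{\e\ell}u_j-u_j\right)$ for a cut-off $\eta$, and Gronwall's inequality together with the assumptions on the initial data yields $\e\norm{v_\e(\cdot,\cdot+\e\ell)-v_\e}^2_{L^2((0,T)\times\Gamma_{\e,K})}\le C\e\abs{\ell}$, hence $E_{1,\e}^{h}\le C(\abs{h}+\e)$; uniformity in $\e$ then still requires the two-case discussion ($\e<\rho/2C$ versus $\e$ bounded below, the latter handled by continuity of translations for the finitely many remaining $\e_k$). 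Without this equation-based estimate the Kolmogorov argument does not close. A minor further remark: Proposition \ref{kolmo} is stated for $L^2(\Omega,B)$ with translations only in $x\in\Omega$; the paper places $t$ and $y$ into $B=L^2((0,T)\times\Gamma^{y})$ and verifies condition $(i)$ through the $H^{1/2}(\Gamma^{y})$ bound and the $\pt_t$ bound via Aubin--Lions, rather than through literal translations in $t$ and along the surface $\Gamma^{y}$, which are not what the cited criterion asks for and are not well defined on $\Gamma^{y}$ in any case.
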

\begin{proof}
We follow the same idea to the proof of Lemma 5.3 in \cite{bendunf19}.
The proof of the first convergence is based on the Kolmogorov compactness criterion, which is recalled for the convenience of the reader in Proposition \ref{kolmo}. It is carried out in three conditions:

 $\textbf{(i)}$ Let $A\subset\Omega$ a measurable set. We define the sequence $\lbrace v_{A}^{\e} \rbrace_{\e>0}$ as follows:
\begin{equation*}
v_{A}^{\e}(t,y):=\int_{A} \T_{\e}^{b}(v_\e)(t,x,y) \ dx, \text{ for a.e. } t\in (0,T), \ y\in \Gamma^{y}.
\end{equation*} 
 It remains to show that the sequence $v_{A}^{\e} \in L^2\left( 0,T;H^{1/2}(\Gamma^{y})\right)$ is relatively compact in the space $L^2\left( 0,T;L^{2}(\Gamma^{y})\right).$ Since the embedding $H^{1/2}(\Gamma^{y}) \hookrightarrow L^{2}(\Gamma^{y})$ is compact, we have to show that the sequence $v_{A}^{\e}$ is bounded in $L^2\left( 0,T;H^{1/2}(\Gamma^{y})\right) \cap H^1\left( 0,T;L^{2}(\Gamma^{y})\right).$

 We first observe that
 \begin{equation*}
 \begin{aligned}
 \norm{v_{A}^{\e}}_{H^{1/2}(\Gamma^{y})}^2 
 & = \displaystyle \int_{\Gamma^{y}}\card{\int_{A} \T_{\e}^{b}(v_\e)(t,x,y) \ dx}^2 d\sigma_y
 \\ & \quad +\displaystyle \iint_{\Gamma^{y}\times \Gamma^{y}} \int_{A} \dfrac{\card{ \T_{\e}^{b}(v_\e)(t,x,y_1)-\T_{\e}^{b}(v_\e)(t,x,y_2)}^2}{\card{ y_1-y_2 }^{d+1}} \ dxd\sigma_{y_1}d\sigma_{y_2}
 \\& :=\norm{v_{A}^{\e}}_{L^{2}(\Gamma^{y})}^2+ \norm{v_{A}^{\e}}_{H^{1/2}_0(\Gamma^{y})}^2.
 \end{aligned}
 \end{equation*}
 
 With Fubini and Cauchy-Schwarz inequality and the a priori estimate \eqref{E_vw}, one has
 \begin{align*}
 \norm{v_{A}^{\e}}_{L^{2}(\Gamma^{y}_T)}^2 
 &\leq C \displaystyle \int_{0}^{T}\int_{\Omega}\int_{\Gamma^{y}}\card{ \T_{\e}^{b}(v_\e)(t,x,y) }^2 d\sigma_ydxdt
 \\& \leq C \norm{\sqrt{\e}v_{\e}}_{L^{2}(\Gamma_{\e,T})}^2 \leq C.
 \end{align*}
 Next, we need to bound the $H^{1/2}_0$ semi-norm. Since $v_\e=\left( u_{i}^{\e}-u_{e}^{\e}\right){\vert \Gamma_{\e}},$ we use again Fubini and Jensen inequality together with the trace inequality in Remark \ref{trace_ineq} to obtain
 \begin{align*}
\norm{v_{A}^{\e}}_{H^{1/2}_0(\Gamma^{y})}^2
&\leq C\left[\int_{\Omega}\norm{\T_{\e}^{b}(v_\e)}_{H^{1/2}_0(\Gamma^{y})}^2dxdt\right] 
\\ & \leq C\left[ \norm{u_{i}^{\e,\delta}}_{L^{2}(\Omega_{i}^{\e,\delta})}^2+\e^2\norm{\nabla u_{i}^{\e,\delta}}_{L^{2}(\Omega_{i}^{\e,\delta})}^2+\norm{u_{e}^{\e}}_{L^{2}(\Omega_{e}^{\e})}^2+\e^2\norm{\nabla u_{e}^{\e}}_{L^{2}(\Omega_{e}^{\e})}^2\right].
 \end{align*}
 Hence, integrating over $(0,T)$ and using the a priori estimates \eqref{E_u}, we have showed that the sequence $v_{A}^{\e}$ is bounded in $L^2\left( 0,T;H^{1/2}(\Gamma^{y})\right).$
 
 By a similar argument and making use of the estimate \eqref{E_dtvw} on $\e^{1/2}\pt_t v_\e$, we can also show that
\begin{equation*}
\norm{\pt_t v_{A}^{\e}}_{L^{2}(\Gamma^{y}_T)} \leq C.
\end{equation*}
Finally, we deduce that the sequence $v_{A}^{\e}$ is bounded in $L^2\left( 0,T;H^{1/2}(\Gamma^{y})\right) \cap H^1\left( 0,T;L^{2}(\Gamma^{y})\right)$ and due to the Aubin-Lions Lemma the sequence is relatively compact in $L^2\left( 0,T;L^{2}(\Gamma^{y})\right).$
 
 $\textbf{(ii)}$ Due to the decomposition of the domain in Definition \ref{domain_unf}, $\Omega$ can always be represented by a union of scaled and translated reference cells. Fix $\e>0$ and let $k \in \Xi_\e,$ be an index set such that
\begin{equation*}
\widehat{\Omega}^{\e}= \underset{k \in \Xi_\e }{\bigcup} \e (k_{\ell} + Y), \text{ with } k_\ell:=( k_1\ell^\text{mes}_1,\dots,  k_d \ell^\text{mes}_d ).
\end{equation*}
Note that $x \in \e (k_{\ell} +Y) \Leftrightarrow \left[ \dfrac{x}{\e} \right]_Y=k_{\ell}.$ For every fixed $k \in \Xi_\e,$ we subdivide the cell $\e (k_{\ell} +Y)$ into subsets $\e \left(k_{\ell} +Y\right)^\sigma$ with $\sigma\in\left\lbrace 0,1 \right\rbrace^d,$ defined as follows
\begin{equation*}
\e (k_{\ell} +Y)^\sigma :=\left\lbrace x \in \e (k_{\ell} +Y)  : \e\left[\dfrac{x+\e\left\lbrace  \dfrac{h}{\e} \right\rbrace_Y}{\e}\right]_Y =\e (k_{\ell} +\sigma) \right\rbrace,
\end{equation*}
for a given $h \in \R^d.$ It holds $\e (k_{\ell} +Y)=\underset{\sigma\in\left\lbrace 0,1 \right\rbrace^d }{\bigcup}\e (k_{\ell} +Y)^\sigma.$

 We use the same notation as in Proposition \ref{kolmo}. Now, we compute
\begin{align*}
\norm{\tau_h\T_{\e}^{b}(v_\e)-\T_{\e}^{b}(v_\e)}_{L^{2}\left((0,T)\times \Omega^{h}_{\lambda} \times \Gamma^{y}\right)}^2
&=\norm{\tau_h\T_{\e}^{b}(v_\e)-\T_{\e}^{b}(v_\e)}_{L^{2}\left((0,T)\times (\Omega^{h}_{\lambda}\cap \widehat{\Omega}^{\e}) \times \Gamma^{y}\right)}^2
\\& \quad +\norm{\tau_h\T_{\e}^{b}(v_\e)-\T_{\e}^{b}(v_\e)}_{L^{2}\left((0,T)\times (\Omega^{h}_{\lambda}\setminus\widehat{\Omega}^{\e}) \times \Gamma^{y}\right)}^2
\\& := E_{1,\e}^{h}+E_{2,\e}^{h}.
\end{align*}
Proceeding in a similar way to \cite{soren,maria07}, we first estimate $E_{1,\e}^{h}$ using the above decomposition of the domain as follows:
\begin{align*}
E_{1,\e}^{h}& =\sum\limits_{k \in \Xi_\e}\int_{0}^{T} \int_{\e (k_{\ell} +Y)} \int_{\Gamma^{y}} \card{v_\e\left(t,\e\left[\dfrac{x+h}{\e} \right]_Y+\e y  \right)-v_\e\left(t,\e\left[\dfrac{x}{\e} \right]_Y+\e y  \right) }^2 d\sigma_ydxdt
\\ & =\sum\limits_{k \in \Xi_\e}\sum\limits_{\sigma\in\left\lbrace 0,1 \right\rbrace^d }\int_{0}^{T} \int_{\e (k_{\ell} +Y)^\sigma} \int_{\Gamma^{y}} \card{v_\e\left(t,\e\left( k_{\ell}+\sigma+\left[\dfrac{h}{\e} \right]_Y\right)+\e y  \right)-v_\e\left(t,\e k_{\ell}+\e y  \right) }^2 d\sigma_ydxdt
\\ & \leq\sum\limits_{k \in \Xi_\e}\sum\limits_{\sigma\in\left\lbrace 0,1 \right\rbrace^d }\int_{0}^{T} \int_{\e (k_{\ell} +Y)} \int_{\Gamma^{y}} \card{v_\e\left(t,\e\left( k_{\ell}+\sigma+\left[\dfrac{h}{\e} \right]_Y\right)+\e y  \right)-v_\e\left(t,\e k_{\ell}+\e y  \right) }^2 d\sigma_ydxdt
\\ & \leq \sum\limits_{\sigma\in\left\lbrace 0,1 \right\rbrace^d }\int_{0}^{T} \int_{\widehat{\Omega}^{\e}} \int_{\Gamma^{y}} \card{\T_{\e}^{b}v_\e\left(t,x+\e\left( \sigma+\left[\dfrac{h}{\e} \right]_Y\right), y  \right)-\T_{\e}^{b}v_\e\left(t, x, y \right) }^2 d\sigma_ydxdt,
\end{align*} 
which by using the integration formula \eqref{Prop4_buo} $($for $p=2)$ of Proposition \ref{prop_buo} is equal to
\begin{equation*}
\sum\limits_{\sigma\in\left\lbrace 0,1 \right\rbrace^d }\e  \abs{Y}\int_{0}^{T}  \int_{\Gamma_{\e}} \card{v_\e\left(t,x+\e\left( \sigma+\left[\dfrac{h}{\e} \right]_Y\right)  \right)-v_\e\left(t, x \right) }^2 d\sigma_ydt.
\end{equation*}

For a given small $\gamma>0,$ we can choose an $\e$ small enough such that $\card{\e \sigma+\e\left[\dfrac{h}{\e} \right]_Y}< \gamma.$ This amounts to saying that in order to estimate $E_{1,\e}^{h},$ it is sufficient to obtain estimates for given $\ell\in \mathbb{Z}^d,$ $\abs{\e \ell}<\gamma$ of 
\begin{equation}
\norm{v_\e\left(t,x+\e\ell\right)-v_\e\left(t, x \right)}_{L^{2}\left((0,T) \times \Gamma_{\e,K}\right)}^2,
\label{E_trans_v}
\end{equation}
where $\Gamma_{\e,K}=\Gamma_{\e}\cap K$ with $K\subset \Omega$ an open set. 
 
 In order to estimate the norm \eqref{E_trans_v}, we test the variational equation \eqref{Fv_i_ini} for $\tau_{\e\ell}u_{e}^{\e}-u_{e}^{\e} $ with $\varphi_{i}=\eta^2\left( \tau_{\e\ell}u_{i}^{\e,\delta}-u_{i}^{\e,\delta}\right) $ and $\varphi_{e}=\eta^2\left( \tau_{\e\ell}u_{e}^{\e}-u_{e}^{\e}\right),$ where $\eta \in D(K)$ is a cut-off function with $0\leq \eta \leq 1,$ $\eta=1 $ in $K$ and zero outside a small neighborhood $K'$ of $K.$ Proceeding exactly as Lemma 5.2 in \cite{bendunf19}, Gronwall's inequality and the assumptions on the initial data give the following result:
 $$\e\norm{v_\e\left(t,x+\e\ell\right)-v_\e\left(t, x \right)}_{L^{2}\left((0,T) \times \Gamma_{\e,K}\right)}^2\leq C\e\abs{\ell},$$
 where $C$ is a positive constant.
Then, we obtain by using the previous estimate
\begin{equation}
E_{1,\e}^{h}\leq C\left(\abs{h}+\e\right).
\label{E_1e_h}
\end{equation}
Hence, we can deduce that $E_{1,\e}^{h}\rightarrow 0$ as $h\rightarrow 0$ uniformly in $\e$, as in \cite{mariahom}. Indeed, to prove that
\begin{equation}
\forall \rho>0, \exists \mu>0 \text{ such that } \forall \e>0, \ \forall h, \ \abs{h}\leq \mu \Rightarrow E_{1,\e}^{h}<\rho,
\label{E_1}
\end{equation}
one identifies two cases:
\begin{itemize}
\item[$(a)$] For $0<\e<\dfrac{\rho}{2C}:$ take $\mu=\dfrac{\rho}{2C},$ then, from \eqref{E_1e_h}, we get that condition \eqref{E_1} holds for $\abs{h}\leq \mu.$ 
\item[$(b)$] For $\dfrac{\rho}{2C}<\e < 1:$ we consider sequences $\e$ of the form $\e_k=\dfrac{1}{k},$ $k\in \mathbb{N},$ there are finitely many elements $\e_k$ in the interval $(\frac{\rho}{2C},1)$ and for each $\e_k,$ $\exists \mu_k=\mu(\e_k)$ such that $\forall h, \ \abs{h}\leq \mu_k,$ condition \eqref{E_1} holds, due to the continuity of translations in the mean of $L^2$-functions. Thus choosing $\mu=\min\lbrace\frac{\rho}{2C}, \mu_k\rbrace,$ property \eqref{E_1} is proved.
\end{itemize}

It easy to check that $$E_{2,\e}^{h}=\norm{\tau_h\T_{\e}^{b}(v_\e)}_{L^{2}\left((0,T)\times (\Omega^{h}_{\lambda}\setminus\widehat{\Omega}^{\e}) \times \Gamma^{y}\right)}^2\leq \norm{\tau_h\T_{\e}^{b}(v_\e)}_{L^{2}\left((0,T)\times (\Omega_{\lambda}\setminus\widehat{\Omega}^{\e}) \times \Gamma^{y}\right)}^2.$$
Hence, we can deduce that $E_{2,\e}^{h}\rightarrow 0$ as $h\rightarrow 0$ uniformly in $\e.$ Indeed, to prove that
\begin{equation}
\forall \rho>0, \exists \mu>0 \text{ such that } \forall \e>0, \ \forall h, \ \abs{h}\leq \mu \Rightarrow E_{2,\e}^{h}<\rho,
\label{E_2}
\end{equation}
one identifies two cases:
\begin{itemize}
\item[$(a)$] For $\e$ small enough, say $\e<\e_0,$ $\Omega_{\lambda}\subset\widehat{\Omega}^{\e},$ then $E_{2,\e}^{h}=0.$
\item[$(b)$] For $\e_0<\e < 1:$ we consider sequences $\e$ of the form $\e_k=\dfrac{1}{k},$ $k\in \mathbb{N},$ there are finitely many elements $\e_k$ in the interval $(\e_0,1)$ and for each $\e_k,$ $\exists \mu_k=\mu(\e_k)$ such that $\forall h, \ \abs{h}\leq \mu_k,$ condition \eqref{E_2} holds, due to the continuity of translations in the mean of $L^2$-functions. Thus choosing $\mu=\min\lbrace \mu_k\rbrace,$ property \eqref{E_2} is proved.
\end{itemize}
This ends the proof of the condition (ii) in Proposition \ref{kolmo}.

$\textbf{(iii)}$ The last condition follows from the a priori estimate \eqref{E_vr}. Indeed, we have:
\begin{equation*}
\int_{0}^{T}\int_{\Omega\setminus\Omega_{\lambda}}\card{\T_{\e}^{b}(v_\e)}^2 dxdt \leq \abs{\Omega\setminus\Omega_{\lambda}}^{\frac{r-2}{r}}\left(\int_{\Omega_T}\card{\T_{\e}^{b}(v_\e)}^r dxdt \right)^{\frac{2}{r}}\leq C \abs{\Omega\setminus\Omega_{\lambda}}^{\frac{r-2}{r}}.
\end{equation*}
The conditions (i)-(iii) imply that the Kolmogorov criterion for $\T_{\e}^{b}(v_\e)$ holds true in $L^{2}(\Omega_T\times \Gamma^{y}).$ This concludes the proof of the first convergence in our Lemma.

 Next, we want to prove the second convergence. Note that from the structure \eqref{ionic_model} of $\mathrm{I}_{1,ion}$ and using Proposition \ref{prop_buo}, we have
\begin{equation*}
\T_{\e}^{b}\left(\mathrm{I}_{1,ion}(v_\e)\right)=\mathrm{I}_{1,ion}\left( \T_{\e}^{b}(v_\e)\right)
\end{equation*}
Due to the strong convergence of $\T_{\e}^{b}(v_\e)$ in $L^2(\Omega_T\times \Gamma^{y}),$ we can extract a subsequence, such that $\T_{\e}^{b}(v_\e)\rightarrow v$ a.e. in $\Omega_T\times \Gamma^{y}.$ Since $\mathrm{I}_{1,ion}$ is continuous, we have
\begin{equation*}
\mathrm{I}_{1,ion}\left( \T_{\e}^{b}(v_\e)\right)\rightarrow \mathrm{I}_{1,ion}(v) \text{ a.e. in } \Omega_T\times \Gamma^{y}.
\end{equation*}
Further, we use estimate \eqref{E_vr} with property \eqref{Prop4_buo} of Proposition \ref{prop_buo} to obtain 
$$ \norm{\T_{\e}^{b}\left(\mathrm{I}_{1,ion}(v_\e)\right)}_{L^{r/(r-1)}\left(\Omega_T \times \Gamma^{y}\right)}\leq\abs{Y}^{(r-1)/r}\norm{\e^{(r-1)/r}\mathrm{I}_{1,ion}(v_\e)}_{L^{r/(r-1)}(\Gamma_{\e,T})}\leq C.$$
Hence, using a classical result (see Lemma 1.3 in \cite{lions1969}):
\begin{equation*}
\mathrm{I}_{1,ion}\left( \T_{\e}^{b}(v_\e)\right)\rightharpoonup\mathrm{I}_{1,ion}(v)\text{ weakly in } L^{r/(r-1)}(\Omega_T\times \Gamma^{y}).
\end{equation*}
Moreover, we use Vitali's Theorem to obtain the strong convergence of $\mathrm{I}_{1,ion}\left( \T_{\e}^{b}(v_\e)\right)$ to $\mathrm{I}_{1,ion}(v)$ in $L^q(\Omega_T\times \Gamma^{y}), \ \forall q\in[1,r/(r-1)).$
\end{proof}
Finally, collecting all the convergence results of $J_1,\dots,J_5$ obtained above, we pass to the limit when $\e\rightarrow 0$ in the unfolded formulation \eqref{Fv_i_r} to obtain the following limiting problem:
 \begin{equation}
\begin{aligned}
&\dfrac{\abs{\Gamma^{y}}}{\abs{Y}}\iint_{\Omega_{T}}\pt_t v\Psi_i\ dxdt
\\& +\dfrac{1}{\abs{Y}}\dfrac{1}{\abs{Z}}\iiiint_{\Omega_{T}\times Y_i \times Z_{c}}\mathrm{M}_{i} \left[ \nabla u_{i}+\nabla_y \widehat{u}_i+\nabla_z \widetilde{u}_i\right] \left[ \nabla_x \Psi_i+\Psi_1\nabla_y\Phi_1+\Psi_2 \Phi_2\nabla_z\Theta\right]  \ dxdydzdt
\\&+\dfrac{\abs{\Gamma^{y}}}{\abs{Y}}\iint_{\Omega_{T}}\mathrm{I}_{1,ion}(v)\Psi_i\ dxdt+\dfrac{\abs{\Gamma^{y}}}{\abs{Y}}\iint_{\Omega_{T}}\mathrm{I}_{2,ion}\left( w\right) \Psi_i\ dxdt
\\&=\dfrac{\abs{\Gamma^{y}}}{\abs{Y}}\iint_{\Omega_{T}}\I_{app}\Psi_i\ dxdt
\end{aligned}
\label{Fvi_psi_phi_theta}
\end{equation}
Similarly, we can prove also that the limit of \eqref{Fv_d_r} as $\e$ tends to zero, is given by:
\begin{equation}
\dfrac{\abs{\Gamma^{y}}}{\abs{Y}}\iint_{\Omega_{T}} \pt_t w \phi \ dxdt-\dfrac{\abs{\Gamma^{y}}}{\abs{Y}}\iint_{\Omega_{T}}  H(v, w) \phi \ dxdt =0.
\label{Fv_d_unf}
\end{equation}
\subsection{Extracellular problem}\label{unfextra}
The authors in \cite{bendunf19} have applied and developed the two-scale unfolding method established by Cioranescu et al. \cite{doinaunf06} on a problem defined at two scales  to obtain the homogenized model (see also \cite{doinaunf08,doinaunf12}). Whereas for the intracellular domain, we develop a three-scale approach applied to the intracellular problem to handle with the two structural levels of this domain (see Section \ref{unfintra}).  We recall the following initial extracellular problem:
\begin{equation}
 \begin{aligned}
\A_\e u_{e}^\e &=0 &\ \text{ in } \Omega_{e,T}^{\e}, \\ \mathrm{M}_{e}^{\e}\nabla u_{e}^{\e} \cdot n_e=\e\left( \pt_{t} v_\e+\I_{ion}(v_\e,{w}_\e)-\I_{app,\e}\right) &=\I_m &\ \text{on} \ \Gamma_{\e,T}, 
 \end{aligned}
 \label{pbiniextra}
 \end{equation}
 with $\A_\e=-\nabla \cdot\left( \mathrm{M}_{e}^{\e}\nabla\right),$ where the extracellular conductivity matrices $\mathrm{M}_e^{\e}=(\mathrm{m}^{pq}_{e})_{1 \leq p,q \leq d}$ defined by: $$\mathrm{M}^{\e}_{e}(x)=\mathrm{M}_{e}\left(\dfrac{x}{\e}\right), \
  a.e. \ \text{on} \ \R^d,$$
satisfying the elliptic and periodic conditions \eqref{A_M_ie}.\\

In our approach, we investigate the same technique used in \cite{bendunf19} for problem \eqref{pbiniextra}. So, we unfold the weak formulation \eqref{Fv_i_ini} of the extracellular problem using only the unfolding operators $\T_\e^e$ and $\T^b_\e$ to obtain:
  \begin{equation}
\begin{aligned}
& \dfrac{1}{\abs{Y}}\iiint_{\Omega_{T}\times \Gamma^{y}}\T_{\e}^{b}(\pt_t v_\e)\T_{\e}^{b}(\varphi_e)\ dxd\sigma_ydt
\\& +\dfrac{1}{\abs{Y}}\iint_{\Omega_{T}\times Y_e}\T_{\e}^{e}\left(\mathrm{M}_{e}^{\e}\right) \T_{\e}^{e}\left( \nabla u_{e}^{\e}\right) \T_{\e}^{e}\left(\nabla\varphi_e\right)  \ dxdydt
\\&+\dfrac{1}{\abs{Y}}\iiint_{\Omega_{T}\times \Gamma^{y}}\mathrm{I}_{1,ion}\left( \T_{\e}^{b}(v_\e)\right) \T_{\e}^{b}(\varphi_e)\ dxd\sigma_ydt
\\&+\dfrac{1}{\abs{Y}}\iiint_{\Omega_{T}\times \Gamma^{y}}\mathrm{I}_{2,ion}\left( \T_{\e}^{b}(w_\e)\right) \T_{\e}^{b}(\varphi_e)\ dxd\sigma_ydt
\\&=\dfrac{1}{\abs{Y}}\iiint_{\Omega_{T}\times \Gamma^{y}}\T_{\e}^{b}(\I_{app,\e})\T_{\e}^{b}(\varphi_e)\ dxd\sigma_ydt+R'_5-R'_4-R'_3-R'_2-R'_1
\end{aligned}
\label{Fv_e_r}
\end{equation} 
with $R'_1,\dots, R'_5 $ are similarly defined as $R_1,\dots, R_5 $ in the previous section.

 Proceeding similarly for the extracellular problem by taking into account that the test functions have the following form: \begin{equation}
\varphi_{e}^{\e}=\Psi_e(t,x)+\e\Psi_1(t,x)\Phi_1^{\e}(x),
\end{equation}
with function $\Phi_1^{\e}$  defined by: 
$$\Phi_1^{\e}(x)=\Phi_1\left( \dfrac{x}{\e}\right),$$
where $\Psi_e, \Psi_1$  are in $D(\Omega_T)$ and $\Phi_1$ in $H_{\#}^1(Y_e).$ Then, we can prove that the limit of \eqref{Fv_e_r}, as $\e$ tends zero, is given by:
 \begin{equation}
\begin{aligned}
&\dfrac{\abs{\Gamma^{y}}}{\abs{Y}}\iint_{\Omega_{T}}\pt_t v\Psi_e\ dxdt +\dfrac{1}{\abs{Y}}\iiint_{\Omega_{T}\times Y_e}\mathrm{M}_{e} \left[ \nabla u_{e}+\nabla_y \widehat{u}_e\right] \left[ \nabla \Psi_{e}+\Psi_1\nabla_y \Phi_{1}\right]  \ dxdydt
\\&+\dfrac{\abs{\Gamma^{y}}}{\abs{Y}}\iint_{\Omega_{T}}\mathrm{I}_{2,ion}\left( w\right) \Psi_e\ dxdt+\dfrac{\abs{\Gamma^{y}}}{\abs{Y}}\iint_{\Omega_{T}}\mathrm{I}_{1,ion}(v)\Psi_e\ dxdt
\\&=\dfrac{\abs{\Gamma^{y}}}{\abs{Y}}\iint_{\Omega_{T}}\I_{app}\Psi_e\ dxdt.
\end{aligned}
\label{Fve_psi_phi}
\end{equation}

\subsection{Derivation of the macroscopic bidomain model}\label{macro}
The convergence results of the previous section allow us to pass to the limit in the microscopic equations \eqref{Fv_i_ini}-\eqref{Fv_d_ini}  and to obtain the homogenized model formulated in Theorem \ref{thm_macro}.

 We first derive the macroscopic (homogenized) equation for the intracellular problem. To this end, we will find the expression of $\widehat{u}_i$ and $\widetilde{u}_i$ in terms of the homogenized solution $u_i.$ Then, we derive the cell problem from the homogenized equation \eqref{Fvi_psi_phi_theta}. Finally, we obtain the weak formulation of the corresponding macroscopic equation.
 
   We first take $\Psi_i$ equal to zero, to get:
\begin{equation}
\dfrac{1}{\abs{Y}}\dfrac{1}{\abs{Z}}\iiiint_{\Omega_{T}\times Y_i \times Z_{c}}\mathrm{M}_{i} \left[ \nabla u_{i}+\nabla_y \widehat{u}_i+\nabla_z \widetilde{u}_i\right] \left[\Psi_1\nabla_y\Phi_1 +\Psi_2 \Phi_2\nabla_z\Theta\right]  \ dxdydzdt=0.
\label{Fv_phi1_theta2}
\end{equation}
Next, to determine the explicit form of $\widetilde{u}_i$ so we take $\Psi_1$ equal to zero. Since $u_i$ and $\widehat{u}_i$ are independent of the microscopic variable $z,$ then the formulation \eqref{Fv_phi1_theta2} corresponds to the following microscopic problem:
\begin{equation}
 \begin{cases}
-\nabla_z\cdot\left(\mathrm{M}_i \nabla_z \widetilde{u}_i\right) =\overset{d}{\underset{p,q=1}{\sum}}\dfrac{\pt \mathrm{m}^{pq}_{i}}{\pt z_p}\left(\dfrac{\pt \widehat{u}_i}{\pt y_q}+\dfrac{\pt u_{i}}{\pt x_q}\right) \ \text{in} \ Z_{c},
\\ \left(\mathrm{M}_{i}\nabla_z \widetilde{u}_{i}+\mathrm{M}_{i}\nabla_y \widehat{u}_{i}+\mathrm{M}_{i}\nabla_x u_{i} \right)\cdot n_{z}=0 \ \text{on} \ \Gamma^{z}, 
\\ \widetilde{u}_i \ z\text{-periodic}. 
 \end{cases}
 \label{Azztildeu_i}
 \end{equation}
Hence, by the $z$-periodicity of $\mathrm{M}_i$ and the comptability condition, it is not difficult to establish the existence of a unique periodic solution up to an additive constant of the problem \eqref{Azztildeu_i} (see for instance the work of \cite{BaderDev}).\\
 Thus, the linearity of terms in the right of the equation \eqref{Azztildeu_i} suggests to look for $\widetilde{u}_i$ under the following form in terms of $u_i$ and $\widehat{u}_i$ :
\begin{equation} 
\widetilde{u}_i(t,x,y,z)=\theta_{i}(z)\cdot \left(\nabla_y \widehat{u}_i+\nabla_x u_{i}\right)+\tilde{u}_{0,i}(t,x,y),
\label{tildeu_i}
\end{equation}
where $\tilde{u}_{0,i}$ is a constant with respect to $z$ and each element $\theta_{i}^q$ of $\theta_{i}$ satisfies the $\delta$-cell problem:
\begin{equation}
\begin{cases}
-\nabla_z\cdot\left(\mathrm{M}_i\nabla_z \theta_{i}^q\right)  =\overset{d}{\underset{p=1}{\sum}}\dfrac{\pt \mathrm{m}^{pq}_{i}}{\pt z_p}(y,z) \ \text{in} \ Z_{c},\\ \theta_{i}^q \ y\text{- and }z\text{-periodic}, \\ \mathrm{M}_{i}\nabla_z \theta_{i}^q \cdot n_{z}=-(\mathrm{M}_{i}e_q)\cdot n_{z} \ \text{on} \ \Gamma^{z},  
 \end{cases}
 \label{Azztheta_i}
 \end{equation}
 for $q=1,\dots,d.$ Moreover, the existence and uniqueness of solution $\theta_{i}^q \in H_{\#}^1(Z_{c})$ to problem \eqref{Azztheta_i} are automatically satisfied with $H_{\#}^1(Z_{c})$ is given by \eqref{W}. 
 
Furthermore, we take $\Psi_2$ equal to zero to find the form of $\widehat{u}_i$ (note that $\psi_1$ is now chosen different from zero).  So, we replace $\widetilde{u}_i$ by its form \eqref{tildeu_i} on the formulation \eqref{Fv_phi1_theta2}. Then, we obtain a mesoscopic problem defined on the unit cell portion $Y_i$ and satisfied by $\widehat{u}_i$ as follows:
\begin{equation}
 \begin{cases}
\ -\nabla_y\cdot\left(\widetilde{\mathbf{M}}_i \nabla_y \widehat{u}_i\right)=\overset{d}{\underset{p,k=1}{\sum}} \dfrac{\pt \widetilde{\mathbf{m}}^{pk}_{i} }{\pt y_p} \dfrac{\pt u_{i}}{\pt x_k} \ \text{in} \ Y_{i},
\\ \\  \ \left(\widetilde{\mathbf{M}}_i \nabla_y \widehat{u}_{i} + \widetilde{\mathbf{M}}_i \nabla_x u_{i}\right)\cdot n_{i}=0 \ \text{ on } \ \Gamma^{y},   
 \end{cases}
 \label{Byyhatui_1}
 \end{equation}
where the coefficients of the \textbf{first-level} homogenized conductivity matrix $\widetilde{\mathbf{M}}_i=(\widetilde{\mathbf{m}}^{pk}_i)_{1\leq p,k \leq d}$ defined by: 
\begin{equation}
\widetilde{\mathbf{m}}^{pk}_i(y)=\dfrac{1}{\abs{Z}}\overset{d}{\underset{q=1}{\sum}}\displaystyle \int_{Z_{c}} \left(\mathrm{m}^{pk}_{i}+ \mathrm{m}^{pq}_{i}\dfrac{\pt \theta_{i}^{k}}{\pt z_q}\right) \ dz.
\label{Mt_i}
\end{equation}

\begin{rem}Note that the $y$-periodicity of $\widetilde{\mathbf{M}}_i$ comes from the fact that the coefficients of conductivity matrix $\mathrm{M}_{i}$ and of the function $\theta_{i}$ are $y$-periodic. Following \cite{ben,doina}, it is easy to verify that the homogenized conductivity tensors of the intracellular $\widetilde{\mathbf{M}}_{i}$ and extracellular $\widetilde{\mathbf{M}}_{e}$ spaces are symmetric and positive definite.
\end{rem}

Thus, we prove the existence and uniqueness by using same arguments from Lax-Milgram theorem (see \cite{BaderDev} for more details). \\
Hence, the linearity of terms in the right of the equation \eqref{Byyhatui_1} suggests to look for $\widehat{u}_{i}$ under the following form in terms of $u_i$:
 \begin{equation} 
\widehat{u}_{i}(t,x,y)=\chi_{i}(y)\cdot \nabla_x u_{i}+\widehat{u}_{0,i}(t,x),
\label{hatui_1}
\end{equation}
where $\widehat{u}_{0,i}$ a constant with respect to $y$ and each element $\chi_{i}^k$ of $\chi_{i}$ satisfies the following $\e$-cell problem:
\begin{equation}
\begin{cases}
 -\nabla_y\cdot\left(\widetilde{\mathbf{M}}_i \nabla_y \chi_{i}^k\right)  =\overset{d}{\underset{p=1}{\sum}}\dfrac{\pt \widetilde{\mathbf{m}}_{i}^{pk}}{\pt y_p} \ \text{in} \ Y_{i},\\ \\ \widetilde{\mathbf{M}}_{i} \nabla_y \chi_{i}^k \cdot n_{i}=-\left(\widetilde{\mathbf{M}}_{i} e_k \right) \cdot n_{i} \ \text{on} \ \Gamma^{y},
 \end{cases}
 \label{Byychi_i}
 \end{equation}
 for $e_k,k=1,\dots,d,$ the standard canonical basis in $\R^d.$ Since the matrix $\widetilde{\mathbf{M}}_{i}$ is positive definite, so we can prove the existence and uniqueness of the solution $\chi_{i}^k \in H_{\#}^1(Y_{i})$ to problem \eqref{Byychi_i}.

\begin{rem} At this point, we deduce that this method is used to homogenize the problem with respect to $z$ and then with respect to $y$. We remark also that  allows to obtain the effective properties at $\delta$-structural level and which become the input values in order to find the effective behavior of the cardiac tissue.
\end{rem}

 Finally, inserting the form \eqref{tildeu_i}-\eqref{hatui_1} of $\widetilde{u}_i$ and $\widehat{u}_i$ into \eqref{Fvi_psi_phi_theta} and setting $\Psi_1, \Psi_2$ equals to zero, one obtains the weak formulation of the homogenized equation for the intracellular problem:
 \begin{equation}
\begin{aligned}
&\mu_{m}\iint_{\Omega_{T}}\pt_t v\Psi_i\ dxdt+\iint_{\Omega_{T}}\doublewidetilde{\mathbf{M}}_{i} \nabla u_{i} \cdot \nabla \Psi_i  \ dxdt+\mu_{m}\iint_{\Omega_{T}}\mathrm{I}_{1,ion}\left( v\right) \Psi_i\ dxdt
\\&+\mu_{m}\iint_{\Omega_{T}}\mathrm{I}_{2,ion}(w)\Psi_i\ dxdt=\mu_{m}\iint_{\Omega_{T}}\I_{app}\Psi_i\ dxdt
\end{aligned}
\label{Fvhomi}
\end{equation}
with $\mu_{m}=\abs{\Gamma^{y}}/\abs{Y}$ and the coefficients of the \textbf{second-level} homogenized conductivity matrix $\doublewidetilde{\mathbf{M}}_i=\left( \doublewidetilde{\mathbf{m}}^{pq}_i\right)_{1\leq p,q \leq d}$ defined by:
\begin{equation}
\begin{aligned}
\doublewidetilde{\mathbf{m}}^{pq}_i & :=\dfrac{1}{\abs{Y}}\overset{d}{\underset{k=1}{\sum}}\displaystyle \int_{Y_{i}}  \left(\widetilde{\mathbf{m}}_i^{pk} \dfrac{\pt \chi_{i}^q}{\pt y_k}(y)+\widetilde{\mathbf{m}}_i^{pq}\right) dy
\\ & = \dfrac{1}{\abs{Y}} \dfrac{1}{\abs{Z}}\overset{d}{\underset{k,\ell=1}{\sum}}\displaystyle \int_{Y_{i}} \int_{Z_{c}}   \left[\displaystyle \left(\mathrm{m}^{pk}_{i}+ \mathrm{m}^{p\ell}_{i}\dfrac{\pt \theta_{i}^{k}}{\pt z_\ell}\right) \dfrac{\pt \chi_{i}^q}{\pt y_k}(y)+\left(\mathrm{m}^{pq}_{i}+ \mathrm{m}^{p\ell}_{i}\dfrac{\pt \theta_{i}^{q}}{\pt z_\ell}\right)\right] \ dzdy
\end{aligned}
\label{Mb_i}
\end{equation} 
with the coefficients of the conductivity matrix $\widetilde{\mathbf{M}}_i=\left(\widetilde{\mathbf{m}}^{pk}_i\right)_{1\leq p,k \leq d}$ defined by \eqref{Mt_i}.

Similarly, we obtain the second homogenized equation for the extracellular problem:\begin{equation}
\begin{aligned}
&\mu_{m}\iint_{\Omega_{T}}\pt_t v\Psi_e\ dxdt+\iint_{\Omega_{T}}\widetilde{\mathbf{M}}_{e} \nabla u_{e} \cdot \nabla \Psi_e  \ dxdt+\mu_{m}\iint_{\Omega_{T}}\mathrm{I}_{2,ion}\left( w\right) \Psi_e\ dxdt
\\&+\mu_{m}\iint_{\Omega_{T}}\mathrm{I}_{1,ion}(v)\Psi_e\ dxdt=\mu_{m}\iint_{\Omega_{T}}\I_{app}\Psi_e\ dxdt
\end{aligned}
\label{Fvhome}
\end{equation}
with $\mu_{m}=\abs{\Gamma^{y}}/\abs{Y}$ and the coefficients of the homogenized conductivity matrices $\widetilde{\mathbf{M}}_e=\left( \widetilde{\mathbf{m}}^{pk}_e\right)_{1\leq p,k \leq d}$ defined by:
\begin{equation}
\widetilde{\mathbf{m}}^{pk}_e:=\dfrac{1}{\abs{Y}}\overset{d}{\underset{q=1}{\sum}}\displaystyle\int_{Y_e}\left( \mathrm{m}_e^{pk}+\mathrm{m}^{pq}_{e}\dfrac{\pt \chi_e^k}{\pt y_q}\right) \ dy.
\label{Mb_e}
\end{equation}
each element $\chi_{e}^k \in H_{\#}^1(Y_e)$ of $\chi_{e}$ satisfies the following $\e$-cell problem:
\begin{equation}
\begin{cases}
 -\nabla_y\cdot\left(\mathbf{M}_e \nabla_y \chi_{e}^k\right)  =\overset{d}{\underset{p=1}{\sum}}\dfrac{\pt \mathbf{m}_{e}^{pk}}{\pt y_p} \ \text{in} \ Y_{e},\\ \\ \mathbf{M}_{e} \nabla_y \chi_{e}^k \cdot n_{e}=-\left(\mathbf{M}_{e} e_k \right) \cdot n_{e} \ \text{on} \ \Gamma^{y},
 \end{cases}
 \label{Bxxchi_e}
 \end{equation}
 for $e_k,k=1,\dots,d,$ the standard canonical basis in $\R^d.$

\section{Conclusion}
Many biological and physical phenomena arise in highly heterogeneous media, the properties of which vary on three (or more) length scales. 
In this paper, an important homogenization technique have been established for predicting the bioelectrical behaviors of the cardiac tissue with multiple small-scale configurations. Furthermore, we have presented via the unfolding homogenization a rigorous mathematical justification  for the results obtained in a recent work \cite{BaderDev} based a three-scale asymptotic homogenization method. These main mathematical models describe the bioelectrical activity of the heart, from the microscopic activity of ion channels of the cellular membrane to the macroscopic properties in the whole heart. We have described how reaction-diffusion systems can be derived from microscopic models of cellular aggregates by unfolding homogenization method on three different scales. 
  
  The present study has some limitations and is open to several improvements. For example, analytical formulas have been found for an ideal particular geometry at the mesoscale and microscale. Nevertheless, the natural next step is to consider more realistic geometries by solving the appropriate cellular problems analytically and numerically.
 
  
 As future plans, we intend to address the resolution of a novel problem, called "tridomain model", by taking into account the presence of gap junctions as connection between adjacent cardiac cells. We want to investigate existence and uniqueness of solutions of the tridomain equations, including commonly used ionic model, namely the FitzHugh-Nagumo model. An additional step could be the derivation, using the homogenization theory, of the macroscopic behaviors of heart tissue.


%
%

\section*{Acknowledgments}
We would like to thank the anonymous referee for his careful reading and constructive comments.

\bibliographystyle{plain}
 \bibliography{Hom}

\appendix

\section{Compactness result for the space $L^p(\Omega,B)$}\label{appB}
In this part, we give a characterization of relatively compact sets $F$ in $L^p(\Omega, B)$ for $p\in[1;+\infty),$ $\Omega\subset \R^d$ open and bounded set and $B$ a Banach space. 
\begin{prop}[Kolmogorov-Riesz type compactness result] Let $\Omega \subset \R^d$ be an open and bounded set. Let $F \subset L^{p}(\Omega,B)$ for a Banach space B and $p\in[1;+\infty).$ For $f\in F$ and $h\in\R^d,$ we define $\tau_{h}f(x):=f(x+h).$ Then $F$ is relatively compact in $L^{p}(\Omega,B)$ if and only if
\begin{itemize}
\item[$(i)$] for every measurable set $ C\subset \Omega$ the set $\lbrace \int_C f dx \ 
: \ f \in F \rbrace$ is relatively compact in $B,$
\item[$(ii)$] for all $\lambda>0,$ $h\in \R^d$ and $h_i\geq 0,$ $i=1,\dots,d,$ there holds
$$\underset{f\in F}{\sup} \norm{\tau_{h}f-f}_{L^p\left(\Omega_\lambda^h,B\right)}\rightarrow 0, \text{ for } h\rightarrow 0, $$
where $\Omega_\lambda^h:=\lbrace x\in \Omega_\lambda : x+h \in \Omega_\lambda\rbrace$ and $\Omega_\lambda:=\lbrace x\in \Omega : dist(x,\pt \Omega)>\lambda\rbrace,$
\item[$(iii)$] for $\lambda>0,$ there holds $\underset{f\in F}{\sup} \int_{\Omega\setminus\Omega_\lambda} \abs{f(x)}^p dx\rightarrow 0$ for $\lambda \rightarrow 0.$
\end{itemize}
\label{kolmo}
\end{prop}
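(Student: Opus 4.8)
The plan is to establish both implications, with the sufficiency direction carrying essentially all of the work; throughout I regard $\nu_f(C) := \int_C f\,dx$ as a $B$-valued (Bochner) set function, and I will repeatedly use that $\norm{\int_C f}_B \le \abs{C}^{1-1/p}\norm{f}_{L^p(\Omega,B)}$ by Hölder's inequality. For \emph{necessity}, if $F$ is relatively compact then (i) is immediate: for fixed $C$ the map $f \mapsto \int_C f\,dx$ is a bounded linear operator $L^p(\Omega,B) \to B$, and bounded linear operators send relatively compact sets to relatively compact sets. For (ii) and (iii) I would fix $\rho > 0$, cover $F$ by finitely many balls $B(f_1,\rho),\dots,B(f_N,\rho)$, and combine the triangle inequality with the fact that each individual $f_j$ satisfies the translation estimate and the tightness estimate (continuity of translation in the mean in $L^p(\Omega,B)$ and absolute continuity of the Bochner integral); the finite maximum over $j$ then yields the required uniformity over $F$.

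For \emph{sufficiency} I would first record that $F$ is bounded in $L^p(\Omega,B)$ — either as a standing hypothesis or extracted from (i) by a Nikodym-type uniform boundedness argument applied to the vector set functions $\nu_f$. Next, using (iii), I reduce to a fixed interior region: given $\rho > 0$, condition (iii) furnishes $\lambda > 0$ with $\sup_{f\in F}\norm{f}_{L^p(\Omega\setminus\Omega_\lambda,B)} < \rho$, so that a finite $\rho$-net for the restricted family $F|_{\Omega_\lambda} \subset L^p(\Omega_\lambda,B)$ produces, after extension by zero, a $2\rho$-net for $F$ in $L^p(\Omega,B)$. It therefore suffices to prove that $F|_{\Omega_\lambda}$ is relatively compact in $L^p(\Omega_\lambda,B)$ for each fixed $\lambda$.

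To this end I introduce the local averaging operator
\[ (P_r f)(x) := \frac{1}{\abs{B_r}}\int_{B_r(x)\cap\Omega} f(y)\,dy, \qquad x \in \overline{\Omega_\lambda}, \]
for $0 < r < \lambda$. The argument rests on two facts. First, $P_r f \to f$ in $L^p(\Omega_\lambda,B)$ as $r \to 0$, \emph{uniformly} in $f \in F$: writing $(P_r f)(x)-f(x)$ as an average over $B_r(0)$ of $\tau_h f(x)-f(x)$ and applying the Minkowski integral inequality bounds the error by $\sup_{\abs{h}\le r}\norm{\tau_h f - f}_{L^p(\Omega_\lambda^h,B)}$, which tends to $0$ uniformly by (ii). Second, for each fixed $r$ the family $\{P_r f : f \in F\}$ is relatively compact in $C(\overline{\Omega_\lambda},B)$, hence in $L^p(\Omega_\lambda,B)$; here I invoke the vector-valued Arzelà–Ascoli theorem. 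Equicontinuity is uniform because
\[ \norm{(P_r f)(x)-(P_r f)(x')}_B \le \frac{1}{\abs{B_r}}\,\bigl\lvert B_r(x)\,\triangle\,B_r(x')\bigr\rvert^{1-1/p}\,\norm{f}_{L^p(\Omega,B)}, \]
and the measure of the symmetric difference depends only on $\abs{x-x'}$ and tends to $0$; while pointwise relative compactness of $\{(P_r f)(x) : f\in F\}$ in $B$ is exactly hypothesis (i) applied to $C = B_r(x)\cap\Omega$. Combining the two facts, $F|_{\Omega_\lambda}$ is uniformly approximated by the totally bounded families $\{P_r f\}$ and is therefore itself totally bounded, hence relatively compact in the Banach space $L^p(\Omega_\lambda,B)$; the tightness reduction then finishes the proof.

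The main obstacle is the pointwise-compactness step, and it is precisely where the Banach-valued setting departs from the classical scalar theorem. In $L^p(\R^d)$ the analogue of (i) is mere boundedness, which automatically delivers pointwise relative compactness since bounded subsets of $\R$ are relatively compact; in infinite dimensions this fails completely, and one must build in the relative compactness of the integral sets $\{\int_C f\,dx\}$ and propagate it through the averaging operator to secure a compact range for the approximants $P_r f$. Verifying that condition (i) is the \emph{right} integral hypothesis for this propagation, together with the preliminary $L^p$-boundedness of $F$ (which I would justify via the uniform boundedness principle for the vector measures $\nu_f$), is the delicate point of the argument.
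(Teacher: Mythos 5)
The paper offers no proof of this proposition at all --- it is quoted from the literature and justified by a one-line citation to Corollary 2.5 of \cite{maria} --- so the relevant comparison is with that source, whose argument your proposal essentially reconstructs: reduction to the interior sets $\Omega_\lambda$ via (iii), local averaging $P_r$, uniform convergence $P_rf\to f$ from (ii) via Minkowski's integral inequality, and relative compactness of $\{P_rf: f\in F\}$ in $C(\overline{\Omega_\lambda},B)$ by a vector-valued Arzel\`a--Ascoli theorem, with (i) supplying the pointwise relative compactness that in the scalar case comes for free from boundedness. That skeleton is correct, and your diagnosis of the pointwise-compactness step as the genuinely Banach-valued ingredient is exactly right.

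The step that would fail as described is the preliminary $L^p$-boundedness of $F$. The proposition does not list boundedness among its hypotheses, so you cannot take it as standing, and the proposed ``Nikodym-type uniform boundedness argument applied to the vector set functions $\nu_f$'' does not deliver it: what such arguments control is $\sup_{f\in F}\sup_{C}\norm{\int_C f\,dx}_B$, i.e.\ the semivariation of $\nu_f$, and in infinite dimensions this does not dominate $\norm{f}_{L^p(\Omega,B)}$. For instance, take $B=L^2(0,2)$ and $f_n(x)=n\,\mathds{1}_{[x,x+1/n]}$ for $x\in(0,1)$: then $\norm{\textstyle\int_C f_n\,dx}_B\le\sqrt{2}$ for every measurable $C$, while $\norm{f_n(x)}_B=\sqrt{n}$ pointwise, so $\norm{f_n}_{L^p((0,1),B)}=\sqrt{n}\to\infty$; the mass of $f$ can spread out in $B$ so that every integral $\int_Cf\,dx$ stays small even though $f$ is pointwise huge. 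The repair lives inside your own framework but must use (i) and (ii) \emph{jointly}: cover $\Omega_{2\lambda}$ by finitely many small cubes $Q\subset\Omega_\lambda$ and split $f=\frac{1}{\abs{Q}}\int_Qf\,dy+\bigl(f-\frac{1}{\abs{Q}}\int_Qf\,dy\bigr)$ on each; the constant part is bounded in $L^p(Q,B)$ uniformly over $F$ by (i) (relatively compact sets are bounded, and only finitely many $C=Q$ occur), the oscillation part is bounded by Jensen's inequality and (ii), and (iii) handles the boundary strip $\Omega\setminus\Omega_{2\lambda}$. Two further points are only bookkeeping but should be attended to in a full write-up: (ii) controls $\tau_hf-f$ only on $\Omega_\lambda^h$ and only for increments with nonnegative components, so your mollification estimate naturally holds on $\Omega_{2\lambda}$ for $r<\lambda$, and general $h$ must be reduced to nonnegative ones by a coordinatewise decomposition together with a change of variables.
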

\begin{proof}
The proof of the proposition can be found as Corollary 2.5 in \cite{maria}.
\end{proof}

\end{document}